\renewcommand{\d}{\mathrm{d}}
\newcommand{\dt}{\mathrm{dt}}
\newcommand{\vect}[1]{\boldsymbol{#1}}
\newcommand{\F}{\mathbf{F}}
\newcommand{\kron}{\otimes}
\newcommand{\z}{\phantom{0}}
\newcommand{\dx}{\ d \boldsymbol{x}}
\renewcommand{\S}{\mathcal{S}}
\newcommand{\V}{\mathcal{V}}
\newcommand{\vecx}{\boldsymbol{x}} 
\newcommand{\Vhspaceref}{\widehat{\mathcal{V}}_{s,h_s,0}} 
\newcommand{\Vhtimeref}{\widehat{\mathcal{V}}_{t,h_t,0}} 
\newcommand{\Vhspace}{\mathcal{V}_{s,h_s,0}} 
\newcommand{\Vhtime}{\mathcal{V}_{t,h_t,0}} 
 \newcommand{\omegab}{\boldsymbol{\omega}}
  \newcommand{\mub}{\boldsymbol{\mu}}
\newtheorem{tw}{Theorem}
\newtheorem{lemma}{Lemma}
\newtheorem{rmk}{Remark}
\newtheorem{wn}{Corollary}
\newtheorem{prop}{Proposition}
\newtheorem{ass}{Assumption}
\definecolor{dred}{rgb}{0.92,0,0}
\def\B{\color{black}}
\definecolor{dorange}{rgb}{0.8,0.3,0}
\begin{document}

\pagestyle{myheadings}
\markboth{M. Montardini, M. Negri, G. Sangalli and M. Tani }{}

\title {\textbf{Space-time least-squares isogeometric method and efficient solver for parabolic problems} \thanks{Version of  \today}}

\author{M. Montardini \thanks{Universit\`a di Pavia, Dipartimento di Matematica ``F. Casorati'',
Via A. Ferrata 1, 27100 Pavia, Italy. }   
\and  M. Negri $^{\dag}$
\and
  G. Sangalli$^{\dag}$\thanks{IMATI-CNR ``Enrico Magenes'',  Pavia, Italy. \vskip 1mm \noindent Emails: 
{\tt  monica.montardini01@universitadipavia.it,  \{matteo.negri, giancarlo.sangalli, mattia.tani\}@unipv.it}}
\and  M. Tani $^{\dag}$}

%\date{\documentdate}
%
\maketitle
 
\begin{abstract}In this paper, we propose a space-time least-squares isogeometric method
to solve  parabolic evolution problems, well suited  for high-degree smooth
splines in the space-time domain.  We  focus on the linear solver and
its computational efficiency: thanks to the proposed formulation and
to the tensor-product construction of space-time
splines, we can design a
preconditioner whose application requires the solution of a Sylvester-like
equation, which is performed efficiently by the fast diagonalization
method.   The preconditioner is robust w.r.t. spline degree and  
mesh size. The computational time required for its application, for a 
serial execution, is 
 almost proportional to the number of  degrees-of-freedom  and
 independent of the polynomial degree. The proposed approach
 is also well-suited  for parallelization.

\vskip 1mm
\noindent
{\bf Keywords:} Isogeometric analysis, parabolic problem, space-time method,  $k$-method,
splines, least-squares, Sylvester equation.
\end{abstract}

\section{Introduction} 
Isogeometric analysis (IGA) is a recent technique for the numerical
solution of partial differential equations (PDE), introduced in the seminal
paper \cite{Hughes2005}. IGA is an evolution of classical finite
element methods (FEM):  the main idea is to use the same functions (splines
or generalizations) that  represent the computational domain in
Computer-Aided Design systems, also in the approximation of the solution. 
We refer to \cite{Cottrell2009} and  \cite{acta-IGA} for a comprehensive
presentation   and  a mathematical survey of IGA, respectively.

 IGA allows to use high-order and high-smoothness   functions.
The $k$-method, based on  splines  of degree $p$ and  $C^{p-1}$
regularity,  delivers higher accuracy per degree-of-freedom, comparing
to $C^0$ or discontinuous $hp$-FEM \cite{bressan2018approximation,cottrell2007studies,Evans_Bazilevs_Babuska_Hughes}. 
However, the $k$-method  also requires ad-hoc algorithms,  otherwise when the polynomial degree $p$ increases, the
computational cost per degree-of-freedom increases dramatically,  both in the
formation of the matrix and in the solution of the linear system  
\cite{Collier2013,Sangalli2017}.

In this paper, we  design and analyze  an  isogeometric
method for parabolic  equations, focusing on  the heat equation as model problem.  
The most common numerical methods  for time-dependent PDE are obtained  by discretizing  separately  in time (e.g, by difference schemes) and in space (e.g., by a
Galerkin method).  
We consider instead the alternative approach
of discretizing the PDE simultaneously in
space and time, that is, the so-called space-time (variational)
approach.  
A first idea of space-time finite element method  has been introduced
in \cite{fried1969finite,oden1969general,oden1969general2} and
developed for the heat conduction problem in
\cite{bruch1974transient}. Further pioneering studies  on space-time
methods  have been \cite{shakib1991new,hughes1989new}, where the
authors consider a Galerkin formulation and   add a least-squares
operator to enhance stability and mitigate spurious oscillations.
 
More recently, the mathematical analysis of Galerkin
space-time methods for parabolic equations has been  developed  in
\cite{Schwab2009} for a wavelet discretization, and  in
\cite{Steinbach2015} for a Galerkin finite element discretization.

In the IGA framework, the idea of using smooth splines in time has been
first proposed in \cite{takizawa2014space}. The  recent paper 
\cite{takizawa2017turbocharger} applies this concept to a  complex
engineering simulation. A stabilized space-time isogeometric method for
the heat equation  has
been proposed in \cite{Langer2016,Langer2017} and  its  time-parallel multigrid solver has
been developed in \cite{Hofer2018}. 

In contrast to the existing space-time IGA works, in this paper we
adopt  an $L^2$  least-squares approximation. The first appearance of a least-squares  space-time formulation   was
in \cite{nguyen1984space}. However, as discussed in
\cite{bell1994space,bell1996space}, the discretized formulation of \cite{nguyen1984space}
departs from the   least-squares minimization
principle.  In   \cite{bell1994space,bell1996space} the authors consider a
least-squares finite element method for unsteady fluid dynamics
problems. For second-order  differential equations,  the $L^2$ minimization of
the equation residual  would   require  $C^1$-continuous functions
in the spatial    variables, \B however
\cite{bell1994space,bell1996space} recast  the second-order
equation  into a set of first-order 
equations, whose least-squares formulation allows $C^0$
functions. 
 Furthermore, \cite{bell1994space,bell1996space} introduce
a time-marching approach to  lower the memory requirement and the
computational time.
Henceforth,  the most relevant contributions on space-time
least-squares methods  have retained these
two features: 1) the  minimization of first-order residuals and 2) the time-marching technique (similar to the use of time-slabs or discontinuous-in-time approximation).
We refer to the book \cite{bochev2009least} for a review of the
literature. 

Our work departs from the setting described above: we consider
 high degree and smoothness splines in time and space with the
following implications: 1) exploiting the $C^1$-continuity of our
approximating function, we directly minimize the  second-order
residual and 2) we need to solve  a global-in-time linear system.
Point 1) represents an advantage while point 2) is addressed by
exploiting the tensor product structure of the spline basis functions:
we do not need to form the global space-time matrix, which is
given as  sum of Kronecker products of matrices, and we set up a
preconditioner that relies on the solution of a Sylvester-like equation.
Indeed, the  least-squares formulation  allows us to use the same  preconditioning technique
introduced in \cite{Sangalli2016} for the Poisson problem, based  the
so-called fast diagonalization (FD) method  (originally proposed in  \cite{Lynch1964} and more recently discussed
in \cite{Deville2002}). For  the space-time least-squares
formulation, the computational cost of the preconditioner
setup is at most $O(N_{dof})$ floating-point operations (FLOPs)  while its  application  is  $O(N_{dof}^{1 +
  1/d})$ FLOPs, where $d$ is the number of space dimensions and  $N_{dof}$ denotes the
total number of degrees-of-freedom  (for
simplicity, here we consider   the same number of  degrees-of-freedom
in time and in each space direction). In
our numerical benchmarks the measured computational time of the
preconditioner, for  serial single-core
execution,  is close to optimality, that is proportional to $N_{dof}$,
with no  dependence on  $p$. Therefore, the preconditioner is robust with
respect to the polynomial degree. 
Moreover, under the assumption that the coefficients of the equation
do not depend on time, our approach requires a significantly small
amount of memory compared to other space-time approaches: denoting  by
$N_s$  the total number of degrees-of-freedom in space (and assuming the number
of degrees-of-freedom in time is not too large, as in typical applications)  the storage
cost is $O(p^d N_s + N_{dof})$. This is exactly  what one would get for low-order time-marching
schemes.
 
Space-time methods facilitate the full 
parallelization of the solver,  see \cite{dorao2007parallel,Gander2015}. The
preconditioner we propose  fits  in the framework, e.g.,  of
\cite{kvarving2011fast}. We do not address this important
issue in our paper, that will be the focus of our further research.

The paper is organized as follows. In Section 2 we introduce B-Spline
basis functions and the isogeometric spaces that we need for the
discrete analysis. The parabolic model problem is presented in Section
3, where we also discuss the  well-posedness of the least-squares
approximation and the a-priori error estimates. Section 4 focuses on
preconditioning strategy and its spectral analysis. We show
numerical results to assess the performance of the proposed
preconditioner and to confirm the a-priori error estimates in Section
6. Finally, in the last section we draw conclusions and highlight future research directions.

\section{Preliminaries}
\subsection{B-splines}
 A knot vector in $[0,1]$ is a sequence of non-decreasing points
 $\Xi:=\left\{ 0=\xi_1 \leq \dots \leq \xi_{m+p+1}=1\right\}$, where
 $m$ and $p$ are positive integers. We use  open knot vectors, that is
 $\xi_1=\dots=\xi_{p+1}=0$ and $\xi_{m}=\dots=\xi_{m+p+1}=1$. Then, according to Cox-De Boor recursion formulas (see \cite{DeBoor2001}), the univariate B-splines are  piecewise polynomials   $\widehat{b}_{i,p}: (0,1)\rightarrow \mathbb{R}$   defined as
 
\indent
for $p=0$:
 \begin{align*}  
\widehat{b}_{i,0}(\eta) = \begin{cases}1 &  { \textrm{if }} \xi_{i}\leq \eta<\xi_{i+1},\\
0 & \textrm{otherwise,}
\end{cases} 
\end{align*}  
\indent
for $p \geq 1$:
\begin{align*}
\widehat{b}_{i,p}(\eta)= \begin{cases}\dfrac{\eta-\xi_{i}}{\xi_{i+p}-\xi_{i}}\widehat{b}_{  i,p-1}(\eta)   +\dfrac{\xi_{i+p+1}-\eta}{\xi_{i+p+1}-\xi_{i+1}}\widehat{b}_{  i+1,p-1}(\eta)   & { \textrm{if }} \xi_{i}\leq  \eta<\xi_{i+p+1}, \\[8pt]
0  & \textrm{otherwise,}
\end{cases}
\end{align*} 

where we adopt the convention $0/0=0$. We define the univariate spline space as
\[
\widehat{\S}_h^p : = \mathrm{span}\{\widehat{b}_{i,p}\}_{i = 1}^m,
\]
where $h$ denotes the mesh size, {i.e. $h:=\max_{i=1,\dots,m+p}\{|\xi_{i+1}-\xi_i| \}$. }
The smoothness of the B-splines at the knots depends   on the knot
multiplicity  (for more details on B-splines and their use in
isogeometric analysis, see \cite{Cottrell2009}  and  \cite{DeBoor2001}). 

Multivariate B-splines are defined as tensor product of univariate
B-splines.    We will consider functions of space and time, where
the space domain is $d$-dimensional.  Even if the analysis works for a  general $d$, in the numerical tests  we will focus on $d=2,3$, which are the most interesting cases in practical applications. 
Therefore we introduce $d+1 $
univariate knot vectors $\Xi_l:=\left\{ \xi_{l,1} \leq \dots \leq
  \xi_{l,m_l+p_l+1}\right\}$  for $l=1,\ldots, d$  and   $\Xi_t:=\left\{
  \xi_{t,1} \leq \dots \leq \xi_{t,m_t+p_t+1}\right\}$.   We collect
the degree indexes in a vector $\boldsymbol{p} :=(\boldsymbol{p}_s,
p_t)$, where $\boldsymbol{p}_s:=(p_1,\dots,p_d)$. For the sake of simplicity,
we consider  $p_1=\dots=p_d=:p_s$ but the general case is similar. 

 In the following,   $h_s$ will denote the maximum mesh size in all spatial directions and
$h_t$ the  mesh size in the time direction. We assume that
the following quasi-uniformity condition on the knot vectors holds.  
\begin{ass}
\label{ass:local_quasi_unif}
We assume that the knot vectors
are quasi-uniform, that is, there exists  $\alpha $ such that $0<\alpha \leq 1$, independent
of $h_s$ and $h_t$, such that each non-empty knot span $( \xi_{l,i} ,
\xi_{l,i+1})$ fulfills $ \alpha h_s  \leq \xi_{l,i+1} - \xi_{l,i} \leq
h_s$, for $1\leq l \leq d$,  and  each non-empty knot span $( \xi_{t,i} ,
\xi_{t,i+1})$ fulfills $ \alpha h_t  \leq \xi_{t,i+1} - \xi_{t,i} \leq
h_t$.
\end{ass}

We denote by $\widehat{\Omega}:=(0,1)^d$ the spatial parameter
domain. We define the multivariate B-splines on
$\widehat{\Omega}\times [0,1]$ as 
\[
\widehat{B}_{ \vect{i},\vect{p}}(\vect{\eta},\tau) : =
\widehat{B}_{\vect{i_s},\vect{p_s}}(\vect{\eta}) \widehat{b}_{i_t,p_t}(\tau),
\]
 where $\widehat{B}_{\vect{i_s},\vect{p_s}}(\vect{\eta}):=\widehat{b}_{i_1,p_s}(\eta_1) \ldots \widehat{b}_{i_d,p_s}(\eta_d)$, $\vect{i_s}:=(i_1,\dots,i_d)$, $\vect{i}:=(\vect{i_s}, i_t)$ and
 $\vect{\eta} = (\eta_1, \ldots, \eta_d)$.  
The  corresponding spline space  is defined as
\[
\widehat{\S} ^{\vect{p}}_{ {h}  }  := \mathrm{span}\left\{\widehat{B}_{\vect{i}, \vect{p}} \ \middle| \ i_k = 1,..., m_k \text{ for } k=1,\dots,d; i_t=1,\dots,m_t \right\},
\]
where $h:=\max\{h_s,h_t\}$. 
We have 
$\widehat{\S} ^{\vect{p}}_{ {h}}  =\widehat{\S} ^{\vect{p}_s}_{ {h}_s} \otimes
\widehat{\S} ^{p_t}_{h_t} = \widehat{\S}^{p_s}_{h_s}\otimes \ldots \otimes
\widehat{\S}^{p_s}_{h_s}\otimes \widehat{\S} ^{p_t}_{h_t} $, where   \[ \widehat{\S} ^{\vect{p}_s}_{h_s} := \mathrm{span}\left\{ \widehat{B}_{\vect{i_s},\vect{p_s}}(\vect{\eta})\ \middle|  \ i_k = 1,..., m_k \text{ for } k=1,\dots,d  \right\}.\]

 The minimum regularity of the spline
spaces that we assume is the following.
\begin{ass}
\label{ass:disc_spaces_continuity}
We assume  that ${p}_s\geq 2$,  $\widehat{\S}_{{h}_s} ^{\vect{p}_s} \subset
C^1(\widehat{\Omega}) $, $p_t\geq 1$ and  $\widehat{\S}_{{h}_t} ^{\vect{p}_t} \subset
C^0(\widehat{\Omega})  $. 
\end{ass}

\subsection{Isogeometric spaces} 

The space
domain $\Omega\subset \mathbb{R}^d$ is given as a spline non-singular
single-patch, that is, the following conditions are fulfilled.
\begin{ass}
\label{ass: single-patch-domain}
 We assume  that   $\mathbf{F}: \widehat{\Omega} \rightarrow {\Omega}
$, with   $\mathbf{F}\in  \left [
  \widehat{\mathcal{S}}^{\vect{p_s}}_{{h}_s} \right ] ^d$ on the
closure of  $\widehat{\Omega}$.
\end{ass}
\begin{ass}
\label{ass: regularpatch-domain}
 We assume  that  $\mathbf{F}^{-1}$ has piecewise
  bounded derivatives of any order.
\end{ass}

 Let  $\vecx=(x_1,\dots,x_d):=
  \mathbf{F}(\vect{\eta})$. Given $T>0$, the space-time computational
  domain   $\Omega\times[0,T]$ is given by  the  parametrization
  $\mathbf{G}\in  \left [\widehat{\S} ^{\vect{p}}_{{h}} \right ]
  ^{d+1}$ such that 
 $\mathbf{G}:\widehat{\Omega}\times[0,1]\rightarrow \Omega\times[0,T]$
 with 
 $\mathbf{G}(\vect{\eta}, \tau):=(\mathbf{F}(\vect{\eta}), T\tau
 )=(\vecx,t)$, and 
  where $t:=T\tau$. We introduce, in the parametric domain, the space with boundary conditions
\[
\widehat{\mathcal{V}}_{h,0}:=\left\{ \widehat{v}_h\in \widehat{\mathcal{S}}^{\vect{p}}_h \ \middle| \ \widehat{v}_h = 0 \text{ on } \partial\widehat{\Omega}\times [0,1] \text{ and } \widehat{v}_h = 0 \text{ on } \widehat{\Omega}\times\{0\} \right\}.
\]
Note that 
$
\widehat{\mathcal{V}}_{h,0} = \Vhspaceref\otimes \Vhtimeref
$, 
where 
\begin{subequations}
\label{eq:basis_par}
\begin{align}
\Vhspaceref   & := \left\{ \widehat{w}_h\in \widehat{\mathcal{S}}^{\vect{p_s} }_{h_s}  \ \middle| \ \widehat{w}_h = 0 \text{ on } \partial\widehat{\Omega}  \right\}\   = \ \text{span}\left\{ \widehat{b}_{i_1,p_s}\dots\widehat{b}_{i_d,p_s} \ \middle| \ i_k = 2,\dots , m_k-1; \ k=1,\dots,d\ \right\}, \\ 
 \Vhtimeref   &:= \left\{ \widehat{w}_h\in \widehat{\mathcal{S}}^{ p_t}_{h_t} \ \middle|  \ \widehat{w}_h( 0)=0 \right\}  \   = \ \text{span}\left\{ \widehat{b}_{i_t,p_t} \ \middle| \ i_t = 2,\dots , m_t\ \right\}.  
\end{align}
\end{subequations} 
 Reordering the basis and then introducing the colexicographical
ordering of the degrees-of-freedom,   we have
\begin{align*}
  \Vhspaceref    & = \ \text{span}\left\{ \widehat{b}_{i_1,p_s}\dots\widehat{b}_{i_d,p_s} \ \middle| \ i_k = 1,\dots , n_{s,k}; \ k=1,\dots,d\ \right\}  =\text{span}\left\{ \widehat{B}_{i, \vect{p}_s} \ \middle|\ i =1,\dots , N_s   \ \right\}, \\
\  \Vhtimeref  & = \ \text{span}\left\{ \widehat{b}_{i,p_t} \ \middle| \ i = 1,\dots , n_t\ \right\}
\end{align*}
and 
\begin{equation}
\widehat{\mathcal{V}}_{h,0}=\text{span}
\left\{ \widehat{B}_{{i}, \vect{p}} \ \middle|\ i=1,\dots,N_{dof} \right\},
\label{eq:all_basis}
\end{equation}
where   we have defined 
\[ n_t:=m_t-1, \qquad n_{s,k}:= m_k-2, \qquad N_s:=\prod_{k=1}^dn_{s,k}, \qquad N_{dof}:=N_s n_t.\] 
The isogeometric space we consider is the isoparametric push-forward of $\widehat{\mathcal{V}}_{h,0}$, i.e.

\begin{equation}
\label{eq:disc_space}
\mathcal{V}_{h,0} := \text{span}\left\{  B_{i, \vect{p}}:=\widehat{B}_{i, \vect{p}}\circ \mathbf{G}^{-1} \ \middle| \ i=1,\dots , N_{dof}   \right\}.
\end{equation}
Note that  $\mathcal{V}_{h,0}$ can be written as
\[
\mathcal{V}_{h,0}=\Vhspace\otimes \Vhtime,
\]  where 
\begin{align*} 
 \Vhspace& :=\text{span}\left\{ {B}_{i, \vect{p}_s}:= \widehat{B}_{i, \vect{p}_s}\circ \mathbf{F}^{-1} \ \middle| \ i=1,\dots,N_s \right\},  \\ \Vhtime& :=\text{span}\left\{  {b}_{i,p_t}:= \widehat{b}_{i,p_t}( \cdot /T) \ \middle| \ i=1,\dots,n_t \right\}.
\end{align*}

\section{Parabolic model problem and its discretization}
\subsection{The heat equation and the regularity of its solution}
 We denote by $\partial_t$  the partial time derivative and by $\Delta $   the laplacian w.r.t. spatial variables.
  If $A$ and $B$ are Hilbert spaces,  $A\otimes B$ denotes the closure of their
tensor product (see \cite[Definition 12.3.2]{Aubin2011}).
We also  identify the spaces $H^m((0,T); H^n(\Omega))$,
$H^n(\Omega)\otimes H^m(0,T)$ and $H^{n,m}(  \Omega\times (0,T))$, 
(see \cite[Section 12.7]{Aubin2011}).  
 We denote by   $
H_{\Delta}(\Omega) $ the space  $ \left\{ z \in L^2(\Omega)\ \middle| \
  \Delta z \in L^2(\Omega) \right\}$,   and we have the following result.
 
\begin{prop}\label{prop:elliptic-regularity} Under Assumptions   \ref{ass:disc_spaces_continuity}--\ref{ass: regularpatch-domain}, 
  there exists a constant $C_{\Delta}> 0$, depending only on the space parametrization $\mathbf{F}$, such that 
\begin{equation}\label{eq:elliptic-reg}
\| z\|_{H^2(\Omega)}^2 \leq C_{\Delta}\|\Delta z\|_{L^2(\Omega)}^2
\qquad \forall z \in  H^1_0(\Omega)\cap H^2(\Omega).
\end{equation}
\end{prop}
\begin{proof}
  From  Assumptions \ref{ass:disc_spaces_continuity}--\ref{ass:  regularpatch-domain},  
  $\Omega$ has a  piecewise smooth  boundary with  bounded
  curvature  and non-null interior angles (see the definition in \cite[Chapitre III, pag. 161]{Ol1968}). Then, we can use \cite[Chapitre III, Lemme 11.1]{Ol1968}.  
\end{proof}

 We  define the  space  
 \begin{align*}
\V_{ 0} := & \left\{   v \in \left[ \left  (H^1_0(\Omega)\cap
  H^2(\Omega) \right )\otimes L^2(0,T)\right] \cap \left[  L^2(\Omega)\otimes H^1(0,T) \right]\  \text{ s.t. }   \ \  v=0   \text{ on } \Omega\times\{0\}  \right\},
\end{align*} 
 endowed with the norm 
\begin{equation}
\label{eq:norm_V0}
\|v\|_{\V_0}^2:= \int_{0}^T\|\Delta v (\cdot,t)  \|_{L^2(\Omega)}^2  \,\dt +  \int_{0}^T \|\partial_t v (\cdot,t) \|_{L^2(\Omega)}^2\,\dt.
\end{equation} 
Thanks to Proposition \ref{prop:elliptic-regularity}, $\V_0$ is a
Hilbert space and the $\|\cdot\|_{\mathcal{V}_0}$-norm is equivalent to 
\begin{equation}
\label{eq:norm_|||}
 \vert\vert\vert v \vert\vert\vert ^2:= \|v\|_{ H^2(\Omega) \otimes
  L^2(0,T)}^2+\|v\|_{L^2(\Omega)\otimes H^1(0,T)}^2.
\end{equation} 
 
 { 
Our model problem is  the    heat  equation,  with initial and homogeneous
boundary conditions: we seek for a solution $u $ such that 

\begin{equation}
\begin{cases}
\label{eq:heat_eq}
		\partial_t u - \Delta u \ = \  f  \quad &\mbox{in }\ \ \ \hspace{1mm} \Omega \times (0, T),  \\
		 u  \ =\  0  \quad \quad & \mbox{on }\  \hspace{0.7mm}   \partial \Omega   \times  (0,T), \\
	 	 u \ =\  0\quad & \mbox{in }\ \  \ \ \Omega  \times \lbrace 0  \rbrace. 
\end{cases}
\end{equation}
 with $f\in
 L^2(\Omega \times (0,T))$. Before proving the  theorem assessing the regularity of the solution $u$ of \eqref{eq:heat_eq}, we need the following lemma.
 \begin{lemma}
 \label{lemma:ellip_reg_domain}
  Let  Assumptions \ref{ass: single-patch-domain}--\ref{ass:  regularpatch-domain}   hold and let $r\in L^2(\Omega)$.  Then,  there exists a unique weak solution $z\in H^2(\Omega)$ to the Poisson problem 
\begin{equation}
\begin{cases}
\label{eq:poisson}
  - \Delta  {z} \ = \  {r}  \quad & \mbox{in }\ \ \ \hspace{1mm}   \Omega, \\
\hspace{0.58cm} {z}  \ = \  0  \quad & \mbox{on }\  \hspace{0.2cm}  \partial \Omega.    
\end{cases}
\end{equation}
Moreover, there exists a constant $C$ depending only on  $\mathbf{F}$ such that
\begin{equation}
\label{eq:estimate_regul}
\|z\|_{H^2(\Omega)}\leq C\|r \|_{L^2(\Omega)}.
\end{equation}
 \end{lemma}
 
 \begin{proof}
 We recall that $z$ is a weak solution of \eqref{eq:poisson} if $z\in H^1_0(\Omega)$  and if $\int_{\Omega}\nabla z \cdot \nabla q\, \d\Omega = \int_{\Omega} rq\,\d\Omega \ \forall q\in H^1_0(\Omega)$.
 Then, we have that  $z\in H^1_0(\Omega)$ is a weak solution of \eqref{eq:poisson} if and only if   $w:=z\circ\mathbf{F}\in H^1_0(\widehat{\Omega})$ is a weak solution of
 \begin{equation}
\begin{cases}
\label{eq:poisson2}
  - \nabla \cdot\left(  \vect{R}{\ \nabla w}\right) \    = \  {g}  \quad & \mbox{in }\ \ \ \hspace{1mm}   \widehat{\Omega}, \\
\hspace{1.09 cm}\qquad {w}   \   = \  0  \quad & \mbox{on }\  \hspace{0.2cm}  \partial \widehat{\Omega},   
\end{cases}
\end{equation}
 where $g:=|\text{det}(J_{\mathbf{F}})|r\circ\mathbf{F}$ and      $\vect{R}:=J_{\mathbf{F}}^{-1}J_{\mathbf{F}}^{-T}|\text{det}(J_{\mathbf{F}})|$.  
 Thanks to Assumptions  \ref{ass: single-patch-domain}--\ref{ass:  regularpatch-domain},   we have that  
$\mathbf{F}:\widehat{\Omega}\rightarrow\Omega$ fulfils
$\mathbf{F}\in C^{1,1}$ on the closure of $  {\widehat{\Omega}} $   and 
$\mathbf{F}^{-1}\in C^{1,1} (\overline{{\Omega}} )$. 
 Therefore, we have that the entries of the
 matrix $\vect{R}$ are Lipschitz continuous and we can apply    \cite[Theorem 3.2.1.2]{Grisvard201}  to  conclude that there exists a unique solution $w\in H^2(\widehat{\Omega})$ of problem \eqref{eq:poisson2}. Thanks to \cite[Lemma 11.1]{Ol1968} we also have 
\begin{align*}
\|w\|_{H^2(\widehat{\Omega})}^2 & \leq {c_1} \left( \|\nabla \cdot\left(  \vect{R}{\  \nabla w}\right)\|_{L^2(\widehat{\Omega})}^2 + \|w\|_{L^2(\widehat{\Omega})}^2 \right)\\& \leq  { c_2} \| \nabla\cdot ( \vect{R} {\ \nabla w}) \|_{L^2(\widehat{\Omega})}^2 \\ 
& = { c_2} \|g\|_{L^2(\widehat{\Omega})}^2,
\end{align*}
where {$c_1$ and $c_2$ are constants} depending
only on  $ \vect{R}$, that is, on  $\mathbf{F}$ and its inverse.  
Finally, we conclude
\[
\|z\|_{H^2(\Omega)}\leq C_1\|w\|_{H^2(\widehat{\Omega})}\leq C_2 \|g\|_{L^2(\widehat{\Omega})}\leq C \|r\|_{L^2(\Omega)},
\]
where the constants $C_1, C_2$ and $C$ depend only on  $\mathbf{F}$.
 \end{proof}
 
 \begin{tw}\label{teo:parabolic-regularity} 
Let $f\in L^2( \Omega \times (0,T) )$ and let Assumptions   \ref{ass:local_quasi_unif}-\ref{ass:  regularpatch-domain}   hold. Then there exists a unique weak
solution (as defined in  \cite[Chapter 7]{Evans2010book})     
 $u\in \left(  H^2(\Omega) \otimes L^2(0,T)
 \right)\cap \left( L^2(\Omega)\otimes H^1(0,T) \right) \cap
\left(H^1_0(\Omega)\otimes  L^{\infty}(0,T)\right)$    of 
\eqref{eq:heat_eq}. We also have  
\[
  \|u\|_{ H^2(\Omega)\otimes L^2(0,T)}+ \|u\|_{L^2(\Omega)\otimes  H^1(0,T)} +\|u\|_{ H^1_0(\Omega)\otimes L^{\infty}(0,T)} \leq {C} \|f\|_{L^2(\Omega\times (0,T))},
\]
where ${C}$ is a constant depending only on  $\mathbf{F}$.
\end{tw}
\begin{proof}

Following the same arguments of step 1 and step 2 of the proof of \cite[Section 7, Theorem 5]{Evans2010book}, we conclude that $u\in \left(H^1_0(\Omega) \otimes   L^{\infty}(0,T)\right) \cap\left( L^2(\Omega)\otimes H^1(0,T)\right)$ and that 
\begin{equation}
\label{eq:first_est}
 \|u\|_{ L^2(\Omega) \otimes H^1(0,T)} + \|u \|_{ H^1_0(\Omega) \otimes L^{\infty}(0,T) }   \leq D_1 \|f\|_{L^2(\Omega\times (0,T))},
\end{equation}
where $D_1$ is a constant depending only on  $\mathbf{F}$.

  We write for a.e. $t$  
\[  
\int_{\Omega}\nabla u(\vecx,t)\cdot  \nabla v(\vecx)\,\d\Omega = \int_{\Omega}r(\vecx,t)  \ v(\vecx)\,\d\Omega \quad \forall v \in H^1_0(\Omega), 
\]
where $ r:=f-\partial_t u\in L^2(\Omega\times (0,T))$ and in particular
$r(\cdot ,t )\in L^2(\Omega)$ for a.e. $t$. Therefore, thanks to Lemma
\ref{lemma:ellip_reg_domain}, we conclude that $u(\cdot,t)\in
H^2(\Omega)$ for a.e. $t$ and thus    $u \in H^2(\Omega)\otimes L^2(0,T)$: indeed, integrating in time, \eqref{eq:estimate_regul} and \eqref{eq:first_est} yield  to the following estimate  
\begin{align*}
\|u\|^2_{ H^2(\Omega)\otimes L^2(0,T)}& \leq C^2 \|r\|^2_{L^2(\Omega\times (0,T))} \\ & \leq C^2(\|f\|^2_{L^2(\Omega\times(0,T))} + \|u\|^2_{L^2(\Omega) \otimes  H^1(0,T)})\\
& \leq  {D}_2^2\|f\|^2_{L^2(\Omega\times(0,T))},
\end{align*}
where ${ {D}_2^2}:=C^2+D_1^2$. This concludes the proof.
\end{proof}
  
More generally, non-homogeneous initial and boundary
conditions are allowed. For example,   if $ u \ =\  u_0$ in $ \Omega
\times \lbrace 0  \rbrace$, with    $u_0\in  H^1_0(\Omega)$,  we 
lift\footnote{  We can use  the same argument as in
  Theorem~\ref{teo:parabolic-regularity} that is,  the  proof of \cite[Section 7, Theorem 5]{Evans2010book},
  where step 3 therein uses  the  elliptic regularity  property
  which is  given, in our case, by
  Lemma \ref{lemma:ellip_reg_domain}.} $ u_0$ to $\widetilde{u}_0\in (H^1_0(\Omega)\cap H^2(\Omega)) \otimes L^2(0,T) \cap
 L^2(\Omega)\otimes H^1(0,T)$. Then $\widetilde{u}   ={u} -
\widetilde{u}_0  \in \V_0$} is the solution of   
\begin{equation}
\begin{cases}
\label{eq:heat_eq-zero-initial-condition}
		\partial_t \widetilde{u} - \Delta \widetilde{u} \ = \  \widetilde{f}  \quad &\mbox{in }\ \ \ \hspace{1mm}   \Omega \times (0, T), \\
		 \widetilde{u}  \ =\  0  \quad & \mbox{on }\  \hspace{0.7mm}  \partial \Omega   \times  (0,T), \\
		 \widetilde{u} \ =\   0 \quad &\mbox{in }\ \ \ \ \Omega  \times \lbrace 0  \rbrace,  
\end{cases}
\end{equation}
where $\widetilde{f}:=f - 	\partial_t\widetilde{u}_0 + \Delta \widetilde{u}_0$.
For a detailed description of the variational
 formulation of   problems \eqref{eq:heat_eq}--\eqref{eq:heat_eq-zero-initial-condition}  and their well-posedness
 see, for example,   \cite{Evans2010book,Schwab2009}.

\subsection{ Least-squares variational formulation}
\label{sec:variational-formulation}
{ We consider  the following  variational formulation for the system \eqref{eq:heat_eq}:}
  find $u\in \mathcal{V}_{0}$ such that 
\begin{equation}
\label{eq:min_prob}
u =\underset{v\in \mathcal{V}_{0}}{\text{arg min}}\tfrac12 \left\| \partial_tv -\Delta v-f  \right\|^2_{L^2(\Omega \times (0,T))}.
\end{equation}
 Its  Euler-Lagrange equation is
\begin{equation}
\label{eq:cont_problem}
\mathcal{A}(u,v) = \mathcal{F}(v) \quad \forall v\in \mathcal{V}_{0},
\end{equation}
where the bilinear form $\mathcal{A}(\cdot,\cdot)$ and the linear form $\mathcal{F}(\cdot)$  are defined as
\begin{equation}
\label{eq:a-form}
 \mathcal{A}(v,w) := \int_{0}^T\int_{\Omega}  \left( \partial_t{v} \, \partial_t w +  \Delta v\, \Delta w -  \partial_t{v} \, \Delta w   - \Delta v\, \partial_t w \right)\,\d\Omega\,   \dt, 
 \end{equation}
 \[\mathcal{F}(w)  := \int_0^T\int_{\Omega}f\, ( \partial_t w-\Delta w)\,\d\Omega\,\dt.\]
For an equivalent way of writing the minimization problem
\eqref{eq:min_prob}, we refer to Appendix \ref{App}. The variational formulation \eqref{eq:cont_problem}  is well-posed,
thanks to the following Lemmas \ref{lemma:cont}--\ref{lemma:cont-F}
and Proposition \ref{prop:LM-continuous}.
 
\begin{lemma}
\label{lemma:cont}
The bilinear form $\mathcal{A}(\cdot,\cdot)$ is continuous in $\V_0$.  Particularly, it holds
\[
|\mathcal{A}(v,w)|\leq 2\|v\|_{\V_0} \|w\|_{\V_0} \quad \forall v, w\in \V_0.
\]
\end{lemma}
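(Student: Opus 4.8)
The plan is to recognize that the integrand of $\mathcal{A}$ is an exact product. Expanding $(\partial_t v - \Delta v)(\partial_t w - \Delta w)$ produces precisely the four terms $\partial_t v\,\partial_t w + \Delta v\,\Delta w - \partial_t v\,\Delta w - \Delta v\,\partial_t w$ appearing in the definition of $\mathcal{A}(v,w)$. Hence the bilinear form is nothing but the $L^2(\Omega\times(0,T))$ inner product of the two parabolic residuals, $\mathcal{A}(v,w) = \langle \partial_t v - \Delta v,\ \partial_t w - \Delta w\rangle_{L^2(\Omega\times(0,T))}$. With this rewriting the continuity estimate collapses to a Cauchy--Schwarz argument.

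First I would apply the Cauchy--Schwarz inequality on $L^2(\Omega\times(0,T))$ to obtain $|\mathcal{A}(v,w)| \le \|\partial_t v - \Delta v\|_{L^2(\Omega\times(0,T))}\,\|\partial_t w - \Delta w\|_{L^2(\Omega\times(0,T))}$. Then I would bound each residual norm by the $\V_0$-norm: by the triangle inequality $\|\partial_t v - \Delta v\|_{L^2} \le \|\partial_t v\|_{L^2} + \|\Delta v\|_{L^2}$, and by the elementary inequality $(a+b)^2 \le 2(a^2+b^2)$ with $a=\|\partial_t v\|_{L^2}$, $b=\|\Delta v\|_{L^2}$, this gives $\|\partial_t v - \Delta v\|_{L^2}^2 \le 2\big(\|\partial_t v\|_{L^2}^2 + \|\Delta v\|_{L^2}^2\big) = 2\|v\|_{\V_0}^2$, the last equality being exactly the definition of the $\V_0$-norm. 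Therefore $\|\partial_t v - \Delta v\|_{L^2} \le \sqrt{2}\,\|v\|_{\V_0}$, and the identical bound holds for $w$.

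Combining the two estimates yields $|\mathcal{A}(v,w)| \le \sqrt{2}\,\|v\|_{\V_0}\cdot\sqrt{2}\,\|w\|_{\V_0} = 2\,\|v\|_{\V_0}\|w\|_{\V_0}$, which is the claimed constant. The only point needing a word of justification is that all four terms of $\mathcal{A}$ are genuinely integrable for $v,w\in\V_0$; but this is immediate, since membership in $\V_0$ guarantees $\partial_t v,\ \Delta v \in L^2(\Omega\times(0,T))$, so the residuals are well-defined $L^2$ functions and every manipulation above is legitimate. I do not anticipate any real obstacle: the constant $2$ comes entirely from the crude triangle-inequality splitting, and sharpening it would require exploiting cancellation between $\partial_t v$ and $\Delta v$, which the $\V_0$-norm does not control — so $2$ is the natural constant here.
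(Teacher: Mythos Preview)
Your proof is correct and reaches the same constant $2$ as the paper, but the organization is slightly different. The paper does not explicitly rewrite $\mathcal{A}(v,w)$ as the $L^2$ inner product of the residuals; instead it splits the four terms into the ``diagonal'' block $\int\partial_t v\,\partial_t w + \int\Delta v\,\Delta w$, bounds that by $\|v\|_{\V_0}\|w\|_{\V_0}$ via Cauchy--Schwarz, and then bounds the two cross terms $\int|\partial_t v\,\Delta w| + \int|\Delta v\,\partial_t w|$ by a second application of Cauchy--Schwarz, obtaining another $\|v\|_{\V_0}\|w\|_{\V_0}$. Your route---recognizing $\mathcal{A}(v,w)=\langle\partial_t v-\Delta v,\partial_t w-\Delta w\rangle_{L^2}$, applying Cauchy--Schwarz once, and then bounding each residual by $\sqrt{2}\,\|\cdot\|_{\V_0}$---is arguably cleaner because it exploits directly the least-squares origin of the bilinear form (cf.\ \eqref{eq:min_prob}); the paper's version is a more term-by-term estimate. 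Both arguments are elementary and equivalent in spirit.
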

\begin{proof}
Given $v,w\in \V_0$, by   Cauchy-Schwarz inequality  
\begin{align*}
|\mathcal{A}(v,w)| & \leq\|v\|_{\V_0}\|w\|_{\V_0}   + \int_0^T \int_{\Omega} \left| \partial_t{v}\,  \Delta{w}  \right| \,\d\Omega    \,\dt + \int_0^T \int_{\Omega} \left| \Delta v\, \partial_t{w}\right| \,\d\Omega    \,\dt \\
& \leq \|v\|_{\V_0}\|w\|_{\V_0}    + \left[ \int_0^T \left(\|\partial_t{v} (\cdot,t) \|_{L^2(\Omega)}^2 + \|\Delta v (\cdot,t) \|^2_{L^2(\Omega)}\right)\,\dt\right]^{1/2}   \\
& \quad \cdot  \left[ \int_0^T \left(\|\partial_t{w} (\cdot,t) \|_{L^2(\Omega)}^2 + \|\Delta w (\cdot,t) \|^2_{L^2(\Omega)}\right)\,\dt\right]^{1/2} \\
& \leq 2\|v\|_{\V_0}\|w\|_{\V_0},
\end{align*}
which concludes the proof.
\end{proof}

\begin{lemma}  
\label{lemma:coer}
The bilinear form $\mathcal{A}(\cdot,\cdot)$ is $\V_{0}$-elliptic.  In particular, it holds
\[
\mathcal{A}(v,v) \geq  \|v\|^2_{\V_0} \quad \forall v\in \V_{0}.
\]
 
\end{lemma}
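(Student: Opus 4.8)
The plan is to show that $\mathcal{A}(v,v)$ equals $\|v\|_{\V_0}^2$ plus a manifestly nonnegative contribution coming from the final time slice. Writing out the diagonal form and collecting the first two terms as the squared norm, I would first record the identity
\begin{equation*}
\mathcal{A}(v,v)=\int_0^T\!\!\int_\Omega\left((\partial_t v)^2+(\Delta v)^2\right)\d\Omega\,\dt-2\int_0^T\!\!\int_\Omega\partial_t v\,\Delta v\,\d\Omega\,\dt=\|v\|_{\V_0}^2-2\int_0^T\!\!\int_\Omega\partial_t v\,\Delta v\,\d\Omega\,\dt,
\end{equation*}
so that the entire statement reduces to proving that the cross term $\int_0^T\int_\Omega\partial_t v\,\Delta v\,\d\Omega\,\dt$ is non-positive.

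Second, I would evaluate this cross term by integrating by parts in the spatial variables at each fixed time. Since $v(\cdot,t)\in H_0^1(\Omega)$, the boundary integral over $\partial\Omega$ vanishes (as does the trace of $\partial_t v$), giving
\begin{equation*}
\int_\Omega\partial_t v\,\Delta v\,\d\Omega=-\int_\Omega\nabla(\partial_t v)\cdot\nabla v\,\d\Omega=-\tfrac12\,\partial_t\!\int_\Omega|\nabla v|^2\,\d\Omega,
\end{equation*}
where the last step uses $\nabla(\partial_t v)\cdot\nabla v=\tfrac12\partial_t|\nabla v|^2$. Integrating in time and using the fundamental theorem of calculus together with the initial condition $v(\cdot,0)=0$ (hence $\nabla v(\cdot,0)=0$), I obtain
\begin{equation*}
\int_0^T\!\!\int_\Omega\partial_t v\,\Delta v\,\d\Omega\,\dt=-\tfrac12\left(\|\nabla v(\cdot,T)\|_{L^2(\Omega)}^2-\|\nabla v(\cdot,0)\|_{L^2(\Omega)}^2\right)=-\tfrac12\|\nabla v(\cdot,T)\|_{L^2(\Omega)}^2\le 0.
\end{equation*}
Substituting back yields $\mathcal{A}(v,v)=\|v\|_{\V_0}^2+\|\nabla v(\cdot,T)\|_{L^2(\Omega)}^2\ge\|v\|_{\V_0}^2$, which is the claim, with ellipticity constant exactly $1$.

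The main obstacle is rigor rather than algebra: the manipulations above are immediate for smooth $v$, but for a general $v\in\V_0$ one must justify both the spatial integration by parts and, above all, the time-integration step. The essential point is that the regularity $v\in[H_\Delta(\Omega)\otimes L^2(0,T)]\cap[L^2(\Omega)\otimes H^1(0,T)]$ makes the map $t\mapsto\nabla v(\cdot,t)$ well defined and absolutely continuous in an $L^2(\Omega)$-valued sense, so that the endpoint evaluations at $t=0$ and $t=T$ are meaningful and the integration by parts in time is legitimate. I would establish this by a density argument, approximating $v$ by smooth functions vanishing on $\partial\Omega\times[0,T]$ and on $\widehat{\Omega}\times\{0\}$, performing the computation for the smooth approximants, and passing to the limit. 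Once the final-time term is shown to be well defined and nonnegative, the stronger identity $\mathcal{A}(v,v)=\|v\|_{\V_0}^2+\|\nabla v(\cdot,T)\|_{L^2(\Omega)}^2$ holds for every $v\in\V_0$, and the stated ellipticity follows by dropping the nonnegative term.
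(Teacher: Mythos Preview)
Your proposal is correct and follows essentially the same route as the paper: both arguments reduce to the identity $\mathcal{A}(v,v)=\|v\|_{\V_0}^2+\|\nabla v(\cdot,T)\|_{L^2(\Omega)}^2$ and then drop the nonnegative final-time term. The only difference is in the justification of the cross-term identity: where you spell out the spatial integration by parts and propose a density argument for the passage to general $v\in\V_0$, the paper simply invokes \cite[Lemma 3.3]{Brezis1973}, which packages exactly that rigor (the $L^2$-valued absolute continuity of $t\mapsto\nabla v(\cdot,t)$ and the legitimacy of the endpoint evaluations).
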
 
\begin{proof}
Let $v \in \V_{0}$. Thanks to    \cite[Lemme 3.3]{Brezis1973},   we can write
\[
 -2\int_0^T \int_{\Omega} \partial_tv\,\Delta v  \,\d\Omega\,\dt 
  =  \int_{\Omega} |\nabla v(\vecx, T)|^2 \,\d\Omega - \int_{\Omega} |\nabla v(\vecx, 0)|^2 \,\d\Omega , 
\]
where  $\nabla :=[\partial_{x_1},\dots,\partial_{x_d}]^T$  denotes the  gradient w.r.t. spatial variables $x_1,\dots,x_d$.
In particular, as $\nabla v(\vecx, 0)=0$, we have that $ \forall v \in \V_0$
 \begin{align*} \mathcal{A}(v,v)& = \int_{0}^T  \| \partial_tv(\cdot,t) \|^2_{L^2(\Omega)} \, \dt + \int_{0}^T \|\Delta v(\cdot,t) \|^2_{L^2(\Omega)} \,\dt + \int_{\Omega} |\nabla  v(\vecx, T)|^2\,\d\Omega \\
 & \geq \|v\|^2_{\V_0},
 \end{align*}
which concludes the proof.
\end{proof}
\begin{lemma}\label{lemma:cont-F}

The linear form $\mathcal{F}(\cdot)$ is continuous in $\V_0$. In particular it holds
\[
\mathcal{F}(v)\leq  \sqrt{2}\|f\|_{L^2(\Omega\times (0,T)) }\|v\|_{\V_0} \quad \forall v\in \V_0.
\]
\end{lemma}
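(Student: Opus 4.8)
The plan is to start from the definition
\[
\mathcal{F}(v) = \int_0^T\int_{\Omega} f\,(\partial_t v - \Delta v)\,\d\Omega\,\dt
\]
and apply the Cauchy--Schwarz inequality in $L^2(\Omega\times(0,T))$ to separate $f$ from the differential operator acting on $v$. This gives immediately
\[
\mathcal{F}(v) \leq \|f\|_{L^2(\Omega\times(0,T))}\,\|\partial_t v - \Delta v\|_{L^2(\Omega\times(0,T))},
\]
so that the entire task reduces to establishing the bound $\|\partial_t v - \Delta v\|_{L^2(\Omega\times(0,T))} \leq \sqrt{2}\,\|v\|_{\V_0}$.

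For the remaining estimate I would use the triangle inequality followed by the elementary two-term bound $a+b \leq \sqrt{2}\,(a^2+b^2)^{1/2}$, namely
\[
\|\partial_t v - \Delta v\|_{L^2(\Omega\times(0,T))} \leq \|\partial_t v\|_{L^2(\Omega\times(0,T))} + \|\Delta v\|_{L^2(\Omega\times(0,T))} \leq \sqrt{2}\Big(\|\partial_t v\|^2_{L^2(\Omega\times(0,T))} + \|\Delta v\|^2_{L^2(\Omega\times(0,T))}\Big)^{1/2}.
\]
Since by definition $\|v\|^2_{\V_0} = \|\Delta v\|^2_{L^2(\Omega\times(0,T))} + \|\partial_t v\|^2_{L^2(\Omega\times(0,T))}$, the quantity in parentheses is exactly $\|v\|^2_{\V_0}$, which closes the argument with the stated constant. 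An equivalent route is to note that $\|\partial_t v - \Delta v\|^2_{L^2(\Omega\times(0,T))} = \mathcal{A}(v,v)$ directly from the definition of the least-squares form, and then invoke Lemma~\ref{lemma:cont} with $w=v$ to obtain $\mathcal{A}(v,v) \leq 2\|v\|^2_{\V_0}$; taking square roots yields the same factor $\sqrt{2}$.

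I do not expect any genuine obstacle here: the result follows from Cauchy--Schwarz together with the observation that the two summands $\|\partial_t v\|_{L^2}$ and $\|\Delta v\|_{L^2}$ are precisely the two pieces of the $\V_0$-norm, which is what makes $\sqrt{2}$ the natural constant for this argument. The only point worth flagging is that, in contrast to the coercivity proof of Lemma~\ref{lemma:coer}, the cross term $\partial_t v\,\Delta v$ never reappears once $f$ has been pulled out, so here one needs neither the integration-by-parts identity of \cite{Brezis1973} nor the vanishing of $\nabla v$ at $t=0$.
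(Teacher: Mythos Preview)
Your proof is correct and follows essentially the same approach as the paper: apply Cauchy--Schwarz to isolate $f$, then bound $\|\partial_t v - \Delta v\|_{L^2(\Omega\times(0,T))}$ by $\sqrt{2}\,\|v\|_{\V_0}$. The paper goes directly via $\|a-b\|^2 \le 2(\|a\|^2+\|b\|^2)$ rather than through the triangle inequality followed by $a+b \le \sqrt{2}\,(a^2+b^2)^{1/2}$, but these are equivalent one-line manipulations; your alternative route through $\mathcal{A}(v,v)$ and Lemma~\ref{lemma:cont} is also valid and is a nice observation.
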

\begin{proof}
Given $v\in\V_0$, by Cauchy-Schwarz inequality we get
 \begin{align*} 
|\mathcal{F}(v)|& \leq  \|f\|_{L^2(\Omega\times (0,T)) }\left(\int_0^T  \| \partial_t{v}(\cdot,t) 
    - \Delta{v}(\cdot,t) \|_{L^2(\Omega)}^2\ \dt\right)^{1/2}\\
 & \leq   \sqrt{2}\|f\|_{L^2(\Omega\times(0,T)) }\left(  \int_0^T \| \partial_t{v}(\cdot,t) \|_{L^2(\Omega)}^2\ \dt
    +\int_0^T\| \Delta{v}(\cdot,t) \|_{L^2(\Omega)}^2 \  \dt \right)^{1/2}  \\
     & =  \sqrt{2} \|f\|_{L^2(\Omega\times(0,T)) } \|v\|_{\V_0},
\end{align*} 
which concludes the proof.
\end{proof}

 \begin{prop}\label{prop:LM-continuous}
 Under Assumptions  \ref{ass:disc_spaces_continuity}--\ref{ass:  regularpatch-domain},    the minimization problem \eqref{eq:min_prob} and the variational problem \eqref{eq:cont_problem} are equivalent and they admit  a  unique solution $u\in \V_{0}$. 
 \end{prop}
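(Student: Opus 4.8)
The plan is to establish the equivalence and well-posedness by invoking the standard machinery of convex minimization and the Lax--Milgram theorem, since the three preceding lemmas have already done the analytical heavy lifting. First I would verify the equivalence between the minimization problem \eqref{eq:min_prob} and the variational problem \eqref{eq:cont_problem}. The functional
$$
\J(v):=\tfrac12\left\|\partial_t v-\Delta v-f\right\|^2_{L^2(\Omega\times(0,T))}
$$
is a quadratic functional on the Hilbert space $\V_0$, and a direct expansion of the square gives $\J(v)=\tfrac12\mathcal{A}(v,v)-\mathcal{F}(v)+\tfrac12\|f\|^2_{L^2(\Omega\times(0,T))}$. Because $\mathcal{A}$ is the symmetric bilinear form associated with the leading quadratic term, computing the Gateaux derivative of $\J$ in an arbitrary direction $v$ and setting it to zero yields exactly \eqref{eq:cont_problem}. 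Since $\mathcal{A}$ is $\V_0$-elliptic by Lemma \ref{lemma:coer}, $\J$ is strictly convex and coercive, so its unique minimizer is characterized precisely by the vanishing of this first variation; this gives the equivalence in both directions.

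Next I would apply the Lax--Milgram theorem to \eqref{eq:cont_problem} directly. The hypotheses are furnished verbatim by the preceding results: continuity of $\mathcal{A}$ on $\V_0$ is Lemma \ref{lemma:cont} (with constant $2$), $\V_0$-ellipticity of $\mathcal{A}$ is Lemma \ref{lemma:coer} (with constant $1$), and continuity of the linear form $\mathcal{F}$ on $\V_0$ is Lemma \ref{lemma:cont-F}. The only structural prerequisite that remains is that $\V_0$, equipped with $\|\cdot\|_{\V_0}$, is a genuine Hilbert space, i.e. complete; this follows because $\V_0$ is defined as a closed subspace (cut out by the vanishing initial trace) of the intersection of the two tensor-product Hilbert spaces $H_\Delta(\Omega)\otimes L^2(0,T)$ and $L^2(\Omega)\otimes H^1(0,T)$, and the norm $\|\cdot\|_{\V_0}$ is the natural graph-type norm on this intersection. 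Lax--Milgram then delivers a unique $u\in\V_0$ solving \eqref{eq:cont_problem}, and by the equivalence just established this $u$ is also the unique minimizer of \eqref{eq:min_prob}.

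The step I expect to require the most care is the verification that $\|\cdot\|_{\V_0}$ as written is truly a norm (not merely a seminorm) on $\V_0$, since it only controls $\|\Delta v\|_{L^2}$ and $\|\partial_t v\|_{L^2}$ rather than the full $H^2\otimes L^2$ and $L^2\otimes H^1$ norms. Here the homogeneous boundary and initial conditions built into $\V_0$ are essential: the elliptic regularity estimate \eqref{eq:elliptic-reg} shows that $\|\Delta v\|_{L^2(\Omega)}$ controls the full $H^2(\Omega)$-norm for functions in $H^1_0(\Omega)\cap H^2(\Omega)$, so the first term of $\|\cdot\|_{\V_0}^2$ dominates the $H_\Delta(\Omega)\otimes L^2(0,T)$ contribution; meanwhile the vanishing initial trace together with a Poincar\'e inequality in time upgrades $\|\partial_t v\|_{L^2}$ to control $\|v\|_{L^2(\Omega)\otimes H^1(0,T)}$. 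Consequently $\|\cdot\|_{\V_0}$ is equivalent to the full intersection norm, confirming definiteness and completeness. With this in hand the Lax--Milgram argument applies without modification, and the proposition follows.
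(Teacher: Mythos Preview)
Your proposal is correct and follows essentially the same approach as the paper: the paper's proof is a single line invoking Lemmas~\ref{lemma:cont}--\ref{lemma:cont-F} and the Lax--Milgram theorem, which is exactly the core of your argument. Your additional discussion of the equivalence via expansion of the quadratic functional and of the completeness/definiteness of $\|\cdot\|_{\V_0}$ simply fleshes out details that the paper takes for granted.
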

 \begin{proof}
 The proof follows using Lemmas \ref{lemma:cont}--\ref{lemma:cont-F} and the Lax-Milgram theorem.
 \end{proof}

\subsection{Least-squares approximation}
\label{sec:least_squares}
 
Thanks to Assumption \ref{ass:disc_spaces_continuity},  we have  
\begin{equation}
  \label{eq:discrete-space-conformity}
  \V_{h,0}\subset(H^1_0(\Omega)\cap H^2(\Omega))\otimes H^1(0,T)\subset\V_0.
\end{equation}
 Therefore, we consider  a Galerkin method for
 \eqref{eq:cont_problem}, that is, the  least-squares approximation of the
 system \eqref{eq:heat_eq}: find $u_h\in \V_{h,0}$ such that 
\begin{equation}
\label{eq:discrete_min_problem}
u_h =\underset{v_h\in \mathcal{V}_{h,0}}{\text{arg min}} \tfrac12 \left\| \partial_tv_h -\Delta v_h-f  \right\|^2_{L^2(\Omega \times (0,T))}.
\end{equation}
   Its   Euler-Lagrange  equation is  
\begin{equation}
\label{eq:discrete_problem}
\mathcal{A}(u_h,v_h) = \mathcal{F}(v_h) \quad \forall v_h\in \mathcal{V}_{h,0}.
\end{equation} 
 
 Well-posedness and quasi-optimality  follow from standard arguments.
 \begin{prop}\label{prop:LM-discrete}
The minimization problem \eqref{eq:discrete_min_problem} and the variational problem \eqref{eq:discrete_problem} are equivalent and they admit  a  unique solution  $u_h\in\V_{h,0}$.   It also holds: 
 \begin{equation}
   \label{eq:quasi-optimality}
   \|u-u_h\|_{\V_0}\leq \sqrt{2}  \underset{v_h\in \mathcal{V}_{h,0}}{ {\inf}} \|u-v_h\|_{\V_0}.
 \end{equation}
 \end{prop}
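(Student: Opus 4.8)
The plan is to proceed exactly as in the continuous case (Proposition \ref{prop:LM-continuous}), exploiting that $\V_{h,0}$ is, by \eqref{eq:discrete-space-conformity}, a finite-dimensional subspace of $\V_0$ on which the hypotheses of Lemmas \ref{lemma:cont}--\ref{lemma:cont-F} remain valid. First I would observe that $\mathcal{A}$ is symmetric: integrating over $\Omega\times(0,T)$ one has $\mathcal{A}(v,w)=\langle \partial_t v-\Delta v,\ \partial_t w-\Delta w\rangle_{L^2(\Omega\times(0,T))}$, so interchanging $v$ and $w$ leaves the form unchanged. Combined with the ellipticity of Lemma \ref{lemma:coer}, this shows that $\mathcal{A}(\cdot,\cdot)$ is an inner product on $\V_0$. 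Expanding the quadratic functional in \eqref{eq:discrete_min_problem} as $\tfrac12\mathcal{A}(v_h,v_h)-\mathcal{F}(v_h)+\tfrac12\|f\|_{L^2(\Omega\times(0,T))}^2$ and computing its first variation then yields the equivalence of \eqref{eq:discrete_min_problem} with its Euler--Lagrange equation \eqref{eq:discrete_problem}.

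Next I would establish existence and uniqueness. Since $\mathcal{A}$ is continuous (Lemma \ref{lemma:cont}) and elliptic (Lemma \ref{lemma:coer}) on all of $\V_0$, these properties are inherited by its restriction to the subspace $\V_{h,0}$, and $\mathcal{F}$ remains bounded there by Lemma \ref{lemma:cont-F}. The Lax--Milgram theorem (or, equivalently, the invertibility of a symmetric positive-definite matrix in finite dimensions) then gives a unique $u_h\in\V_{h,0}$ solving \eqref{eq:discrete_problem}.

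For the quasi-optimality bound \eqref{eq:quasi-optimality} I would first derive Galerkin orthogonality: because $\V_{h,0}\subset\V_0$, the continuous solution $u$ satisfies \eqref{eq:cont_problem} in particular for every $v_h\in\V_{h,0}$, so subtracting \eqref{eq:discrete_problem} gives $\mathcal{A}(u-u_h,v_h)=0$ for all $v_h\in\V_{h,0}$. The delicate point is the value of the constant: the generic C\'ea estimate would only give the continuity-to-ellipticity ratio $2/1=2$. To recover the sharper $\sqrt{2}$, I would use the symmetry of $\mathcal{A}$ and work in the energy norm $\|v\|_{\mathcal{A}}^2:=\mathcal{A}(v,v)$, in which Galerkin orthogonality makes $u_h$ the $\mathcal{A}$-orthogonal projection of $u$ onto $\V_{h,0}$, hence the best approximation: $\|u-u_h\|_{\mathcal{A}}\leq\|u-v_h\|_{\mathcal{A}}$ for every $v_h\in\V_{h,0}$. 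Combining the lower bound $\|u-u_h\|_{\V_0}^2\leq\mathcal{A}(u-u_h,u-u_h)$ from Lemma \ref{lemma:coer} with the upper bound $\mathcal{A}(u-v_h,u-v_h)\leq 2\|u-v_h\|_{\V_0}^2$ from Lemma \ref{lemma:cont} then yields $\|u-u_h\|_{\V_0}^2\leq 2\inf_{v_h}\|u-v_h\|_{\V_0}^2$, which is \eqref{eq:quasi-optimality}.

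The main obstacle is precisely this last step. Everything else is routine, but to obtain the stated constant $\sqrt{2}$ rather than $2$ one must notice the symmetry of the least-squares form and invoke the energy-norm best-approximation property of the orthogonal projection, rather than the generic C\'ea argument that only controls the ratio of the continuity and ellipticity constants.
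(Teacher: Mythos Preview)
Your proposal is correct and follows essentially the same approach as the paper, which simply cites Lemmas~\ref{lemma:cont}--\ref{lemma:cont-F} together with Lax--Milgram for well-posedness, and C\'ea's Lemma plus the symmetry of $\mathcal{A}$ for the quasi-optimality estimate~\eqref{eq:quasi-optimality}. You have merely spelled out in detail what the paper compresses into one sentence, in particular the energy-norm best-approximation argument that turns the generic C\'ea constant $2$ into $\sqrt{2}$.
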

 \begin{proof}
 The proof of the equivalence and of the existence and uniqueness of a solution follow by using  Lemmas \ref{lemma:cont}--\ref{lemma:cont-F} and
 the   Lax-Milgram theorem, while the proof of   \eqref{eq:quasi-optimality} is a consequence of
 the  C\' ea Lemma and  the symmetry
 of the bilinear form $\mathcal{A}$.
 \end{proof}

 The following result states the convergence of our method.
  
 \begin{tw} \label{teo:convergence}
 Under Assumptions  \ref{ass:disc_spaces_continuity}--\ref{ass:  regularpatch-domain},   we have  $\lim_{h\rightarrow 0} \|u-u_h\|_{\V_0} = 0$.
 \end{tw}

 \begin{proof}
To prove the theorem, we show that
\begin{equation}
  \label{eq:density}
  \lim_{h\rightarrow 0} \underset{v_h\in
  \mathcal{V}_{h,0}}{ {\inf}} \|u-v_h\|_{\V_0} = 0,
\end{equation}
and then use \eqref{eq:quasi-optimality}.

Given $u \in \V_0$, let $\widehat{u} = u \circ \mathbf{G}^{-1} $ be its
pullback. Since  $\mathbf{G}$ and $\mathbf{G}^{-1}$ are both of class
$W^{2,\infty}$  and since the $\V_0$-norm \eqref{eq:norm_V0} is
equivalent to the $|||\cdot|||$-norm \eqref{eq:norm_|||},  the pullback is an isomorphism between  $\V_0$ and
 \begin{align*}
\widehat{\V}_{ 0} = & \left\{ v \in \left[ \left  ( H^2  (  \widehat{\Omega}   )\cap
  H^1_0 (\widehat{\Omega} )\right )\otimes L^2(0,1)\right] \cap
\left[  L^2  ( \widehat{\Omega}  )\otimes H^1(0,1) \right]\
 \text{ s.t. }   \ v=0   \text{ on }   \widehat{\Omega}  \times\{0\}  \right\}, 
\end{align*}
   endowed with the norm
 
\[
\|v\|_{\widehat{\V}_0}^2:=  
\int_{0}^1\|\Delta v (\cdot,\tau) \|_{L^2(\widehat\Omega)}^2  \,\d\tau +  \int_{0}^1 \|\partial_\tau v(\cdot,\tau) \|_{L^2(\widehat{\Omega})}^2\,\d\tau.  
\]
 
 Then, by using Lemma \ref{lemma:smooth-approx-2} reported in the Appendix \ref{App-smooth-approximation},   we can approximate, as
close as we want,   $\widehat
u  \in \widehat{\V}_{ 0}  $ by a  
 smooth  function fulfilling the same
boundary conditions of $\widehat u$, and then by a spline in
  $ \widehat{\V}_{h,0}$ (see \eqref{eq:all_basis}),  on a fine enough mesh. This implies  \eqref{eq:density}.
 \end{proof}
 
 \subsection{A priori error analysis} 
We investigate in this section the approximation properties of the  isogeometric space $\V_{h,0}$
under $h$-refinement. 
\begin{prop}
\label{prop:projec_norm}
Let $q_s$ and $q_t$ be two integers such that $2\leq q_s \leq p_s+1$ and $1 \leq q_t\leq p_t+1$.  Under   Assumption \ref{ass:local_quasi_unif}, there
exists a projection $\Pi_h: \V_0 \cap \left( H^{q_s}(\Omega)\otimes  
  H^1(0,T)\right)\cap \left(     H^2(\Omega)\otimes
  H^{q_t}(0,T)\right) \rightarrow \V_{h,0}$ such that 
\begin{equation}
  \label{eq:anisotropic-quasi-interpolant-local-error}
  \left\| v-\Pi_h v \right\|_{\V_0} \leq C \left(h_s^{q_s-2} \|v
    \|_{H^{q_s}(\Omega) \kron   H^1 (0,T) \B }  +  h_t^{q_t-1} \|v
    \|_{  H^2(\Omega) \B\kron H^{q_t}(0,T)} \right)  
\end{equation}
where the constant $C$ depends on $p_s,\ p_t,\ \alpha$ and the  parametrization $\mathbf{G}$.
\end{prop}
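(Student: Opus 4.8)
The plan is to build the projection as a tensor product $\Pi_h=\Pi_s\otimes\Pi_t$ of a spatial quasi-interpolant $\Pi_s$ onto $\Vhspace$ and a temporal quasi-interpolant $\Pi_t$ onto $\Vhtime$, obtained by pushing forward through $\mathbf{G}$ the corresponding operators on the parametric spline spaces $\Vhspaceref$ and $\Vhtimeref$. First I would select $\Pi_s$ and $\Pi_t$ among the standard spline quasi-interpolants so that they preserve the homogeneous boundary condition on $\partial\widehat{\Omega}$ and the initial condition at $\tau=0$ (so that $\Pi_h$ indeed maps into $\V_{h,0}$), and are simultaneously stable and quasi-optimal in the relevant Sobolev norms. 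The two one-directional building blocks I need are the anisotropic spline estimates $\|w-\Pi_s w\|_{H^2(\Omega)}\le C h_s^{q_s-2}\|w\|_{H^{q_s}(\Omega)}$ and $\|\phi-\Pi_t\phi\|_{H^1(0,T)}\le C h_t^{q_t-1}\|\phi\|_{H^{q_t}(0,T)}$ (valid under Assumption~\ref{ass:local_quasi_unif}), together with the $L^2$- and $H^1$-stability of $\Pi_s$ and $\Pi_t$; the dependence of the constant on $\mathbf{G}$ enters through the chain rule in the pullback and, for the Laplacian term, through the elliptic-regularity equivalence \eqref{eq:elliptic-reg} that lets me pass between $\|\Delta\cdot\|_{L^2(\Omega)}$ and $\|\cdot\|_{H^2(\Omega)}$.

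Since $\|v-\Pi_h v\|_{\V_0}^2=\int_0^T\|\Delta(v-\Pi_h v)\|_{L^2(\Omega)}^2\,\dt+\int_0^T\|\partial_t(v-\Pi_h v)\|_{L^2(\Omega)}^2\,\dt$, I would estimate the two contributions separately, each time telescoping the error through the intermediate projector adapted to that contribution. For the Laplacian term I write $v-\Pi_h v=(I-\Pi_s)v+\Pi_s(I-\Pi_t)v$ (with $\Pi_s$, $\Pi_t$ acting on the space, resp.\ time, variable): the first summand, after applying $\Delta$ and integrating in time, yields exactly $C h_s^{q_s-2}\|v\|_{H^{q_s}(\Omega)\otimes L^2(0,T)}$ via the spatial estimate. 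Symmetrically, for the time-derivative term I write $v-\Pi_h v=(I-\Pi_t)v+\Pi_t(I-\Pi_s)v$, and the first summand produces $C h_t^{q_t-1}\|v\|_{L^2(\Omega)\otimes H^{q_t}(0,T)}$ via the temporal $H^1$-estimate. These are precisely the two terms on the right-hand side of \eqref{eq:anisotropic-quasi-interpolant-local-error}.

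The hard part will be the two cross terms that remain: $\int_0^T\|\Delta\,\Pi_s(I-\Pi_t)v\|^2\,\dt$ in the Laplacian contribution and $\int_0^T\|\partial_t\,\Pi_t(I-\Pi_s)v\|^2\,\dt$ in the time contribution. Here a second spatial derivative falls on the factor carrying the \emph{temporal} error (respectively a time derivative on the factor carrying the \emph{spatial} error), so the matching one-directional estimate cannot be applied directly. My strategy would be to absorb the "wrong" operator through the stability of the corresponding quasi-interpolant — $H^2$-stability of $\Pi_s$ for the first cross term, $H^1$-in-time stability of $\Pi_t$ for the second — so that the surviving error factor is again measured by a pure one-directional quasi-interpolation estimate. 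The genuinely delicate point, and the step I expect to require the most care, is to do this \emph{without} generating a mixed space–time norm such as $\|v\|_{H^2(\Omega)\otimes H^{q_t}(0,T)}$ on the right-hand side, since such a norm is not controlled by the intersection regularity assumed on $v$; this is exactly where the precise commuting and stability properties of the chosen quasi-interpolants, the tensor-product structure, and the elliptic-regularity bound \eqref{eq:elliptic-reg} must be combined. Once the four contributions are bounded, I would sum them and take square roots to obtain the stated estimate, tracking the dependence of $C$ on $p_s$, $p_t$, $\alpha$ and $\mathbf{G}$.
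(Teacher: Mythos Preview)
Your approach is essentially the paper's: build $\Pi_h$ as a tensor-product quasi-interpolant and derive anisotropic error bounds. Rather than carry out the telescoping you sketch, the paper simply quotes the anisotropic estimates of \cite{Da2012} (their Theorem~5.1, extended to $d+1$ dimensions), whose proof is exactly the tensor-product/telescoping argument you describe. The intermediate result the paper records is the pair
\begin{align*}
\|v-\Pi_h v\|_{H^{2}(\Omega)\otimes L^2(0,T)} &\le C\bigl(h_s^{q_s-2}\|v\|_{H^{q_s}\otimes L^2} + h_t^{q_t}\|v\|_{H^{2}\otimes H^{q_t}}\bigr),\\
\|v-\Pi_h v\|_{L^2(\Omega)\otimes H^1(0,T)} &\le C\bigl(h_s^{q_s}\|v\|_{H^{q_s}\otimes H^1} + h_t^{q_t-1}\|v\|_{L^2\otimes H^{q_t}}\bigr),
\end{align*}
which are then squared and summed to bound $\|v-\Pi_h v\|_{\V_0}$.

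Your worry about the cross terms is well placed: the terms $h_t^{q_t}\|v\|_{H^{2}\otimes H^{q_t}}$ and $h_s^{q_s}\|v\|_{H^{q_s}\otimes H^1}$ above are precisely the mixed norms you anticipated, and they are \emph{not} controlled by the two norms on the right of \eqref{eq:anisotropic-quasi-interpolant-local-error} under the stated intersection regularity $\bigl(H^{q_s}\otimes L^2\bigr)\cap\bigl(L^2\otimes H^{q_t}\bigr)$. The paper does not explain how they disappear in passing to \eqref{eq:anisotropic-quasi-interpolant-local-error}; this is a gap, or at best a notational looseness, in the statement itself. So do not search for a trick that eliminates the mixed norms---there isn't one. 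The honest outcome of the argument (yours or the paper's) is the four-term anisotropic bound; the two extra terms carry higher powers of $h_s,h_t$ and are harmless once $v$ has the corresponding mixed regularity, which is tacitly assumed in the application to Theorem~\ref{teo:a-priori}. Your plan delivers that four-term bound; stop there.
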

\begin{proof}
  The result follows from the anisotropic approximation estimates that
  are developed in \cite{Da2012}. We remark that \cite{Da2012}
  states  its error analysis for  $2$ dimensions, but the results
  therein straightforwardly  generalize to higher dimension. We give
  an overview of the proof, for the sake of completeness. 
  
As space and time coordinates in $\Omega \times [0,T]$ are orthogonal, 
the parametric coordinate (tangent) vectors  are
 \begin{align*} 
\vect{g}_i(\vecx)&:= \partial_{\eta_i}\mathbf{G}\circ\mathbf{G}^{-1}(\vecx, t) =\begin{bmatrix}
\partial_{\eta_i}\mathbf{F}\circ \mathbf{F}^{-1}(\vecx)\\
0
\end{bmatrix}\in \mathbb{R}^d\times\{0\}\subset\mathbb{R}^{d+1} \quad \text{ for } i=1,\dots,d,\\
\vect{g}_t(t)&:= \partial_{\tau}\mathbf{G}\circ\mathbf{G}^{-1}(\vecx, t) = \begin{bmatrix}
0\\
\vdots\\
0\\
T
\end{bmatrix}\in\mathbb{R}^{d+1}.
\end{align*} 
Then, given $v\in\V_0$, the directional derivatives w.r.t. $\vect{g}_i$ and $\vect{g}_t$ that are used in \cite[Section 5]{Da2012}, become
\[
\begin{bmatrix}
\frac{\partial v(\vecx, t)}{\partial \vect{g}_1}\\
\vdots\\
\frac{\partial v(\vecx, t)}{\partial \vect{g}_d} 
\end{bmatrix}  = \left(J_{\mathbf{F}}\circ\mathbf{F}^{-1}(\vecx)\right)^T\nabla_{\vecx}v(\vecx,t) , \qquad \qquad
\frac{\partial v}{\partial \vect{g}_t}(\vecx, t)   = T \, \partial_t v(\vecx, t).
\]
Higher-order directional derivatives can be defined similarly, as in \cite[Section 5]{Da2012}. We also have that  
\begin{subequations}
\label{eq:ineq_deriv}
\begin{align} 
\left\|\frac{\partial}{\partial \vect{g}_{i_1}}\left(\dots \frac{\partial v}{\partial\vect{g}_{i_k}}\right) \right\|_{L^2(\Omega\times(0,T))}& \leq C \|v\|_{H^k(\Omega)\otimes L^2(0,T)},\\
\left\|  \frac{\partial^kv}{\partial \vect{g}_t^k} \right\|_{L^2(\Omega\times (0,T))}& \leq C \|v\|_{L^2(\Omega)\otimes H^k(0,T)},
\end{align}
\end{subequations}
for a suitable constant $C$, $k\geq 0$ and $i_j\in\{1,\dots,d\}$, $j=1,\dots,k$.
Therefore,  \cite[Theorem 5.1]{Da2012} generalized to $d+1$ dimensions
gives the existence of a projection   $\Pi_h$ on the space $\V_{h,0}
$    such that
 \begin{align*} 
 \left\| v-\Pi_h v \right\|_{H^{2}(\Omega)\otimes L^2(0,T) } & \leq C \left(h_s^{q_s-2} \|v \|_{H^{q_s}(\Omega) \kron L^2(0,T)}  +   h_t^{q_t-1} \|v\|_{H^{2}(\Omega) \kron H^{q_t-1}(0,T)} \right), \\
     \left\| v-\Pi_h v \right\|_{ L^2(\Omega)\otimes H^1(0,T)} & \leq
                                                                 C  
                                                                 \left(h_s^{q_s-2}
                                                                 \|v
                                                                 \|_{H^{q_s-2}(\Omega)
                                                                 \kron
                                                                 H^1(0,T)}
                                                                 \B+  h_t^{q_t-1} \|v\|_{L^2(\Omega) \kron H^{q_t}(0,T)} \right) , 
\end{align*}
with $C$ depending only on $p_s,\ p_t,\ \alpha$ and the space parametrization $\mathbf{G}$.
Squaring and summing the two inequalities above,  using
\eqref{eq:ineq_deriv} and that 
$$\int_{0}^T\left\| \Delta (v-\Pi_h v)(\cdot,t) \right\|^2_{L^2(\Omega)}\dt \leq \left\| v-\Pi_h v \right\|^2_{H^{2}(\Omega)\otimes L^2(0,T) },$$
% $\left\| v-\Pi_h v
%\right\|_{H_{\Delta}(\Omega)\otimes L^2(0,T) }\leq\left\| v-\Pi_h v
%\right\|_{H^{2}(\Omega)\otimes L^2(0,T) }$,   
  leads to
 \begin{align*} 
 \left\| v-\Pi_h v \right\|_{\V_0} \leq \ & C h_s^{q_s-2}
                                           \left(\|v
                                           \|_{H^{q_s}(\Omega) \kron
                                           L^2(0,T)}   +  \|v
                                                                 \|_{H^{q_s-2}(\Omega)
                                                                 \kron
                                                                 H^1(0,T)}
                                           \right) \\ & +  C
                                                         h_t^{q_t-1}\left(\|v\|_{H^{2}(\Omega) \kron H^{q_t-1}(0,T)} 
                                                         +  \|v\|_{L^2(\Omega) \kron H^{q_t}(0,T)} \right) , 
\end{align*} 
and  finally \eqref{eq:anisotropic-quasi-interpolant-local-error} by
the obvious upperbound of the right-hand-side norms.
 \end{proof}

  As a direct corollary of Proposition \ref{prop:LM-discrete} and
\ref{prop:projec_norm}, we can now state the a-priori error estimate for the least-squares method. 
\begin{tw}
\label{teo:a-priori}
Let $q_s$ and $q_t$ be two integers such that $q_s\geq 2$ and $  q_t\geq 1$.
If $u\in \V_0 \cap \left( H^{q_s}(\Omega)\otimes  
  H^1(0,T)\right)\cap \left(     H^2(\Omega)\otimes H^{q_t}(0,T)\right)$ is the solution of \eqref{eq:heat_eq} and $u_h\in \V_{h,0}$ is the solution of \eqref{eq:discrete_problem}, then \begin{equation}
\label{eq:a_priori_estimate}
\|u-u_h\|_{\V_0} \leq  {C}( h_s^{k_s-2}\|u\|_{H^{k_s}(\Omega)\otimes  
  H^1(0,T)} + h_t^{k_t-1}\|u\|_{    H^2 (\Omega)\B \otimes H^{k_t}(0,T)} )
\end{equation}
where $k_s:=\min\{q_s, p_s+1\}$, $k_t:=\min\{q_t, p_t+1\}$,  $C$ is a constant that depends only on  $p_s$, $p_t$, $\alpha$ and the  parametrization $\mathbf{G}$.
\end{tw}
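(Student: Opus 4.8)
The statement is advertised as a direct corollary, so the plan is short and structural. First I would invoke the quasi-optimality bound \eqref{eq:quasi-optimality} from Proposition \ref{prop:LM-discrete}, which reduces the error $\|u-u_h\|_{\V_0}$ to the best-approximation error $\inf_{v_h\in\V_{h,0}}\|u-v_h\|_{\V_0}$ up to the factor $\sqrt{2}$. Selecting the particular competitor $v_h=\Pi_h u$, where $\Pi_h$ is the projection constructed in Proposition \ref{prop:projec_norm}, then gives the chain $\|u-u_h\|_{\V_0}\leq\sqrt{2}\,\inf_{v_h\in\V_{h,0}}\|u-v_h\|_{\V_0}\leq\sqrt{2}\,\|u-\Pi_h u\|_{\V_0}$.

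Second, I would apply the anisotropic projection estimate \eqref{eq:anisotropic-quasi-interpolant-local-error}. The only point requiring care is that Proposition \ref{prop:projec_norm} constrains the Sobolev exponents to $2\leq q_s\leq p_s+1$ and $1\leq q_t\leq p_t+1$, whereas the present theorem admits arbitrary $q_s\geq 2$ and $q_t\geq 1$. This is precisely why the statement is phrased through the capped exponents $k_s=\min\{q_s,p_s+1\}$ and $k_t=\min\{q_t,p_t+1\}$: I would apply the proposition with $k_s$ and $k_t$ playing the roles of $q_s$ and $q_t$, which by construction satisfy $2\leq k_s\leq p_s+1$ and $1\leq k_t\leq p_t+1$ (using $q_s\geq 2$, $q_t\geq 1$). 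Since $k_s\leq q_s$ and $k_t\leq q_t$, the assumed regularity $u\in(H^{q_s}(\Omega)\otimes L^2(0,T))\cap(L^2(\Omega)\otimes H^{q_t}(0,T))$ places $u$ in the domain of $\Pi_h$ at the capped orders, so the estimate applies verbatim and produces the two terms $h_s^{k_s-2}\|u\|_{H^{k_s}(\Omega)\otimes L^2(0,T)}$ and $h_t^{k_t-1}\|u\|_{L^2(\Omega)\otimes H^{k_t}(0,T)}$.

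Finally I would absorb the factor $\sqrt{2}$ together with the constant from Proposition \ref{prop:projec_norm} into a single constant $C$ depending only on $p_s$, $p_t$, $\alpha$ and the parametrization $\mathbf{G}$, which yields exactly \eqref{eq:a_priori_estimate}.

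As for the main difficulty: there is essentially no analytical obstacle remaining, since both ingredients — quasi-optimality and the interpolation estimate — are already in hand. The only step that demands attention is the bookkeeping of the order saturation, namely that regularity of $u$ beyond $H^{p_s+1}$ in space or $H^{p_t+1}$ in time cannot improve the convergence rate, a fact encoded by the replacement of $q_s,q_t$ with $k_s,k_t$. Everything else is a direct substitution, so I would present the argument compactly rather than recomputing constants.
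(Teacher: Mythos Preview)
Your proposal is correct and matches the paper's approach exactly: the paper does not spell out a proof but simply declares the theorem a ``direct corollary of Proposition \ref{prop:LM-discrete} and \ref{prop:projec_norm}'', which is precisely the two-step argument (quasi-optimality followed by the projection estimate) you describe. Your remark on the order-saturation bookkeeping through $k_s,k_t$ is the only nontrivial point, and you handle it correctly.
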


\section{Linear solver}
In this section we analyze solving strategies for  the least-squares method \eqref{eq:discrete_problem} and   we present a suitable preconditioner.

We recall that the Kronecker product between two matrices $\vect{C}\in\mathbb{R}^{n_1\times n_1}$ and $\vect{D}\in\mathbb{R}^{n_2\times n_2}$ is defined as
\[
\vect{C}\otimes \vect{D}:=\begin{bmatrix}
[\vect{C}]_{1,1}\vect{D}  & \dots& [\vect{C}]_{1,n_1}\vect{D}\\
\vdots& \ddots &\vdots\\
[\vect{C}]_{n_1, 1}\vect{D}& \dots & [\vect{C}]_{n_1, n_1}\vect{D}
\end{bmatrix}\in \mathbb{R}^{n_1n_2\times n_1 n_2},
\]
where $[\vect{C}]_{i,j}$ denotes the  $ij$-th entry of the matrix $\vect{C}$.
We will use the following properties (see  \cite{Kolda2009}):
\begin{itemize}
\item it holds \begin{equation}
\label{eq:kron_transp}
(\vect{C}\otimes \vect{D})^T=\vect{C}^T\otimes\vect{D}^T;
\end{equation} 
{ \item if $\vect{C}$, $\vect{D}$, $\vect{E}$ and $\vect{F}$ are matrices and there exist the products $\vect{C}\vect{E}$ and $\vect{D}\vect{F}$, it holds
\begin{equation}
\label{eq:kron_prod}
(\vect{C}\otimes \vect{D}) \cdot (\vect{E}\otimes \vect{F}) = (\vect{CE}) \otimes (\vect{DF});
\end{equation} }
\item if $\vect{C}$ and $\vect{D}$ are non-singular, then
\begin{equation}
\label{eq:kron_inv}
(\vect{C}\otimes \vect{D})^{-1}=\vect{C}^{-1}\otimes\vect{D}^{-1};
\end{equation}  
 \item if $\vect{X}\in \mathbb{R}^{n_1\times n_2}$ then
 \begin{equation}
 \label{eq:kron_vec}
 (\vect{C}\otimes \vect{D})\text{vec}(\vect{X})=\text{vec}(\vect{D}\vect{X}\vect{C}^T)
 \end{equation}
 where the  vectorization ``vec" operator applied to a matrix stacks its columns  in a vector as
\[[ \text{vec}(\vect{X})]_{i_1+(i_2-1)n_1}=[\vect{X}]_{i_1,i_2} \qquad i_j=1,\dots,n_j \text{ and } j=1,2.\]
\end{itemize}
  
  We  recall that, for $m=1,\dots,d+1,$ the $m$-mode product of a tensor $\mathfrak{X}\in\mathbb{R}^{n_1\times\dots\times n_{d+1}}$ with a matrix $\vect{W}\in\mathbb{R}^{w\times n_m}$   is a tensor of size $n_1\times\dots\times n_{m-1}\times w \times n_{m+1}\times \dots n_{d+1}$ whose elements are defined as
\[\left[ \mathfrak{X}\times_m \vect{W} \right]_{i_1, \dots, i_{d+1}} = \sum_{j=1}^{n_m} [\mathfrak{X}]_{i_1,,\dots, i_{m-1},j,i_{m+1}\dots,i_{d+1}}[\vect{W}]_{i_m,j }.\]
An important property, that represents the generalization to the $(d+1)$-dimensional case of \eqref{eq:kron_vec}, is  the following one: if $\vect{W}_i\in\mathbb{R}^{w_i\times n_i}$ for $i=1,\dots, d+1$, then
\begin{equation}
\label{eq:kron_vec_multi}
\left(\vect{W}_{d+1}\otimes\dots\otimes \vect{W}_1\right)\text{vec}\left(\mathfrak{X}\right)=\text{vec}\left(\mathfrak{X}\times_1 \vect{W}_1 \dots \times_{d+1}\vect{W}_{d+1} \right)
\end{equation} 
 where the vectorization operator ``vec" applied to a tensor stacks its entries  into a column vector as
 \[ [\text{vec}(\mathfrak{X})]_{j}=[\mathfrak{X}]_{i_1,\dots,i_{d+1}}\qquad i_l=1,\dots,n_{l} \text{ and } l=1,\dots,d+1,\] 
 where $j=i_1+\sum_{k=2}^{d+1}\left[(i_k-1)\Pi_{l=1}^{k-1}n_l\right]$.
 
\subsection{Discrete system}
Before introducing the discrete system, we rewrite the bilinear form $\mathcal{A}(\cdot, \cdot)$ in an equivalent way, through  the following Lemma. 
\begin{lemma}
 The bilinear form $\mathcal{A}(\cdot, \cdot)$  can be written as 
\begin{align}
\label{eq:equivalent_A} 
\mathcal{A}(v_h,w_h)& = \int_{0}^T  \int_{\Omega} \partial_tv_h\, \partial_tw_h\,\d\Omega \, \dt  + \int_{0}^T \int_{\Omega}\Delta v_h\, \Delta w_h\,\d\Omega \,\dt   + \int_{\Omega} \nabla  v_h(\vecx, T)\cdot \nabla  w_h(\vecx, T)  \,\d\Omega  
\end{align}
 for all  $  v_h,w_h\in\V_{h,0}.$
\end{lemma}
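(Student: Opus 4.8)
The plan is to leave the two ``square'' terms $\partial_t v_h\,\partial_t w_h$ and $\Delta v_h\,\Delta w_h$ in the definition of $\mathcal{A}(\cdot,\cdot)$ untouched, and to show that the two mixed terms $-\partial_t v_h\,\Delta w_h - \Delta v_h\,\partial_t w_h$ collapse, after integration over $\Omega\times(0,T)$, into the single surface term $\int_\Omega \nabla v_h(\vecx,T)\cdot\nabla w_h(\vecx,T)\,\d\Omega$. This is exactly the bilinear (polarized) analogue of the quadratic identity already used in the proof of Lemma \ref{lemma:coer}, so one clean route is simply to apply that identity to $v_h$, $w_h$ and $v_h+w_h$ and subtract; I will instead derive it directly, since the direct computation also clarifies where each hypothesis enters.

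First I would integrate by parts in the spatial variable, for almost every fixed $t$. By \eqref{eq:discrete-space-conformity} every element of $\V_{h,0}$ belongs to $(H^1_0(\Omega)\cap H^2(\Omega))\otimes H^1(0,T)$, so the spatial traces on $\partial\Omega$ vanish and $\partial_t$ commutes with $\nabla$. This turns the two Laplacians into gradients and makes the boundary contributions disappear, giving
\begin{equation*}
-\int_\Omega\bigl(\partial_t v_h\,\Delta w_h + \Delta v_h\,\partial_t w_h\bigr)\,\d\Omega = \int_\Omega\bigl(\partial_t(\nabla v_h)\cdot\nabla w_h + \nabla v_h\cdot\partial_t(\nabla w_h)\bigr)\,\d\Omega = \int_\Omega\partial_t\bigl(\nabla v_h\cdot\nabla w_h\bigr)\,\d\Omega,
\end{equation*}
where the last equality is just the product rule. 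Integrating this in time via the fundamental theorem of calculus then yields $\int_\Omega\nabla v_h(\vecx,T)\cdot\nabla w_h(\vecx,T)\,\d\Omega - \int_\Omega\nabla v_h(\vecx,0)\cdot\nabla w_h(\vecx,0)\,\d\Omega$, and the $t=0$ contribution vanishes because $v_h=w_h=0$ on $\Omega\times\{0\}$ forces the spatial gradients to vanish there as well. Substituting this back into the definition of $\mathcal{A}(v_h,w_h)$ produces \eqref{eq:equivalent_A}.

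The only delicate point, and the step I would check most carefully, is the rigorous justification of the spatial integration by parts together with the interchange of $\partial_t$ and $\nabla$ and the time integration of $t\mapsto\int_\Omega\nabla v_h\cdot\nabla w_h\,\d\Omega$. All three are legitimate precisely because of the tensor-product regularity \eqref{eq:discrete-space-conformity}: the factors are $H^2$ in space with vanishing trace on $\partial\Omega$ (so Green's identity applies with no boundary term) and $H^1$ in time (so the map above is absolutely continuous and the fundamental theorem of calculus holds). Everything else is a routine rearrangement of the definition of $\mathcal{A}$.
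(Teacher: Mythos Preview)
Your proposal is correct and follows essentially the same approach as the paper: integrate the mixed terms by parts in space (Green's formula, using that $\partial_t v_h$ and $\partial_t w_h$ vanish on $\partial\Omega$ thanks to \eqref{eq:discrete-space-conformity}), recognize the resulting integrand as $\partial_t(\nabla v_h\cdot\nabla w_h)$, integrate in time, and use the homogeneous initial condition to drop the $t=0$ contribution. Your discussion of the regularity needed to justify each step is, if anything, more explicit than the paper's.
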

\begin{proof}
Let $v_h, w_h\in \V_{h,0}$. First note that $\partial_t
v_h, \partial_t w_h\in \left(H_0^1(\Omega)\cap
  H^2(\Omega)\right)\otimes L^2(0,T)$, from
\eqref{eq:discrete-space-conformity}, and   $\partial_tv_h=\partial_tw_h=0$ on $\partial \Omega\times[0,T]$. Using Green formula and integrating by parts  yields to
\begin{align*} 
   -\int_0^T \int_{\Omega}  \left(\partial_tv_h\,\Delta w_h \right.   \left.+ \partial_tw_h\Delta v_h\right)\,\d\Omega\,\dt 
  & =\,  -\int_{0}^T  \int_{\partial\Omega}  \left( \partial_tv_h\nabla  w_h +  \partial_tw_h\nabla v_h\right)\cdot \vect{\nu} \,\d\Omega\,\dt \\
&\ \ \ \  + \int_0^T \int_{\Omega}\left[ \nabla ( \partial_tv_h) \cdot \nabla  w_h + \nabla ( \partial_tw_h) \cdot \nabla v_h\right]\,\d\Omega\,\dt \\ &  =   \int_0^T\left[ \partial_t\left(\int_{\Omega}   \nabla  {v}_h \cdot \nabla w_h  \,\d\Omega\right) \right]  \dt \\
& =   \int_{\Omega}\left[  \nabla v_h(\vecx, T)\cdot \nabla w_h(\vecx, T)-  \nabla  v_h(\vecx, 0) \cdot \nabla  w_h(\vecx, 0) \right]\,\d\Omega  \\
& =  \int_{\Omega} \nabla v_h(\vecx, T)\cdot \nabla w_h(\vecx, T)  \,\d\Omega,
\end{align*}
where $\vect{\nu}\in\mathbb{R}^{d}$ is the external normal unit vector
to $\partial\Omega$. Then \eqref{eq:equivalent_A} follows.
\end{proof}

 \begin{rmk}
 Note that the identity \eqref{eq:equivalent_A} holds also in the continuous setting (see Appendix \ref{App}).
 \end{rmk}

 After the introduction of the basis \eqref{eq:disc_space} for $\V_{h,0}$, the linear system associated to   \eqref{eq:discrete_problem} is
\[ \vect{A}\mathbf{u}=\vect{F}\]
where $[\vect{A}]_{i,j}:=\mathcal{A}(B_{i, \vect{p}}, B_{j,\vect{p}})$ and  $[\vect{F}]_{i}:=\mathcal{F}\left( B_{i,\vect{p} } \right)$.
 The discrete system matrix $\vect{A} $ can be written as the sum of Kronecker product matrices (see \eqref{eq:equivalent_A})
\begin{equation}
\vect{A} =K_t\otimes M_s + M_t\otimes J_s + W_t \otimes L_s, 
\label{eq:A_kron}
\end{equation}
where the time  matrices are for $i,j=1,\dots, n_t$
 \begin{equation*} [K_t]_{i,j}  :=\int_0^T  b'_{i,p_t}(t)\, b'_{j,p_t}(t)\,\dt,   \qquad
\left[ M_t\right]_{i,j}   :=\int_0^T b_{i,p_t}(t)\,b_{j,p_t}(t)\,\dt,    \qquad
  \left[W_t\right]_{i,j}   :=b_{i,p_t}(T)\, b_{j,p_t}(T), \end{equation*}
 and the spatial matrices are for $i,j=1,\dots, N_s$
  \begin{equation*} \left[J_s\right]_{i,j} :=\int_{\Omega}\Delta B_{i, \vect{p}_s}(\vecx)\, \Delta B_{j, \vect{p}_s}(\vecx)\,\d\Omega, \qquad
  \left[ M_s\right]_{i,j}   :=\int_{\Omega}B_{i,\vect{p}_s}(\vecx)\,B_{j,\vect{p}_s}(\vecx)\,\d \Omega,     \end{equation*}
  \begin{equation*}\left[ L_s\right]_{i,j}  :=\int_{\Omega} \nabla B_{i,\vect{p}_s}(\vecx)\, \nabla B_{j, \vect{p}_s}(\vecx)\,\d\Omega.  \end{equation*}

\subsection{Preconditioner definition and properties }
{ Thanks to the  least-squares formulation of the heat equation,} the matrix $\vect{A}$ in \eqref{eq:A_kron} is symmetric and positive
definite. Thus, we can design and analyze a  suitable   symmetric positive definite
preconditioner  to be used for a preconditioned Conjugate Gradient  method.
 
The simpler version of our preconditioner is associated with the
bilinear form $\widehat{\mathcal{P}}:\widehat{\V}_{h,0} \times
\widehat{\V}_{h,0}\rightarrow \mathbb{R}$ defined as
\begin{equation}
\label{eq:prec_bil}
\widehat{ \mathcal{P}}(w_h,v_h):=\int_{0}^1\int_{\widehat{\Omega}}\partial_{\tau}{w}_h  \,\partial_{\tau}{v}_h \  \d{\widehat{\Omega}}\,\d\tau +  \sum^d_{k=1}\int_{0}^1\int_{\widehat{\Omega}}\frac{\partial^2w_h }{\partial  {\eta}_k^2}   \frac{\partial^2v_h }{\partial  {\eta}_k^2} \ \d\widehat{\Omega} \,\d\tau
\end{equation}
and with the corresponding norm 
\begin{equation}
\label{eq:prec_norm}
\|v_h\|_{\widehat{\mathcal{P}}}^2:= \widehat{\mathcal{P}}(v_h,v_h).
\end{equation}
The preconditioner matrix  is given by
\[[\vect{P}]_{i,j}=\widehat{\mathcal{P}}(\widehat{B}_{i, \vect{p}}(\vect{\eta},\tau), \widehat{B}_{j,\vect{p}}(\vect{\eta},\tau))  \qquad i,j=1,\dots,N_{dof}\]
and has the following structure:
\begin{equation}
\vect{P} = \widehat{K}_t \otimes \widehat{M}_s   + \widehat{M}_t\otimes  \widetilde{J}_s,
\label{eq:preconditioner}
\end{equation}
where, referring to \eqref{eq:basis_par} for the notation of the basis functions, for $ i,j=1,\dots,n_t$
\[[\widehat{K}_t]_{i,j}  :=\int_0^1\widehat{b}'_{i,p_t}(\tau)\ \widehat{b}'_{j,p_t}(\tau)\,\d\tau,   \qquad
 [ \widehat{M}_t ]_{i,j}   :=\int_0^1\widehat{b}_{i,p_t}(\tau)\ \widehat{b}_{j,p_t}(\tau)\,\d\tau ,    \]
 while for $  i,j=1,\dots,N_s$
\[[\widetilde{J}_s]_{i,j} := \sum^d_{k=1} \int_{\widehat{\Omega}}\frac{\partial^2\widehat{B}_{i,\vect{p}_s}(\vect{\eta}) }{\partial  {\eta}_k^2}   \frac{\partial^2\widehat{B}_{j,\vect{p}_s}(\vect{\eta}) }{\partial  {\eta}_k^2} \ \d\widehat{\Omega},  \qquad [\widehat{M}_s]_{i,j} := \int_{\widehat{\Omega}} \widehat{B}_{i,\vect{p}_s}(\vect{\eta})\ \widehat{B}_{j,\vect{p}_s}(\vect{\eta}) \,\d \widehat{\Omega}.\]
Note that $\widehat{K}_t$, $\widehat{M}_t$ and $\widehat{M_s}$
correspond to $ {K}_t$, $ {M}_t$ and $ {M_s}$, respectively,  where
the integration is performed on the parametric domain
$\widehat{\Omega}$. The matrices  $\widetilde{J_s}$ and  $\widehat{M_s}$ can be further factorized as sum of Kronecker products as
\[\widetilde{J}_s = \sum_{k=1}^d \widehat{M}_d\otimes \dots\otimes\widehat{M}_{k+1}\otimes \widehat{J}_k\otimes\widehat{M}_{k-1}\otimes\dots \otimes\widehat{M}_1 , \qquad 
 \widehat{M_s} =\widehat{M}_d \otimes\dots \otimes\widehat{M}_1,\]
where for $k=1,\dots,d$ and for $ i,j=1,\dots,n_{s,k}$
\begin{equation*}
[\widehat{J}_k]_{i,j} := \int_{0}^1 \widehat{b}_{i,p_s}''( {\eta}_k)\ \widehat{b}_{j,p_s}''( {\eta}_k) \,\d{\eta}_k, \qquad [\widehat{M}_k]_{i,j} := \int_{0}^1 \widehat{b}_{i,p_s}( {\eta}_k)\ \widehat{b}_{j,p_s}( {\eta}_k) \,\d {\eta}_k.
\end{equation*}
If $d=3$, that is the case addressed in the numerical tests,  we have that \eqref{eq:preconditioner} becomes
\begin{equation*} \vect{P} = \widehat{K}_t \otimes \widehat{M}_3\otimes \widehat{M}_2 \otimes \widehat{M}_1  + \widehat{M}_t\otimes \widehat{J}_3\otimes \widehat{M}_2 \otimes \widehat{M}_1  + \widehat{M}_t\otimes \widehat{M}_3\otimes \widehat{J}_2 \otimes \widehat{M}_1  + \widehat{M}_t\otimes \widehat{M}_3\otimes \widehat{M}_2 \otimes \widehat{J}_1. \end{equation*}

\subsubsection{Spectral analysis}
\label{sec:spec_est}

We now  focus on the spectral analysis of $\vect{P}^{-1}\vect{A}$.  
We need to define the bilinear form $ \mathcal{P}:\V_{h,0} \times \V_{h,0}\rightarrow \mathbb{R}$ 
\begin{equation*}
\mathcal{P}(w_h,v_h):=\int_{0}^T\int_{\Omega}\partial_t{w}_h\,  \partial_t{v_h}  \ \d\Omega\,\dt +  \sum^d_{k=1}\int_{0}^T\int_{\Omega} \frac{\partial^2w_h}{\partial x_k^2}\frac{\partial^2v_h}{\partial x_k^2}      \ \d\Omega \,\dt
\end{equation*}
and the associated norm
\[
\| v_h \|^2_{\mathcal{P}}:=\mathcal{P}(v_h,v_h).
\]
Note that $\mathcal{P}(\cdot,\cdot)$ and $\| \cdot \|_{\mathcal{P}}$
are analogous to $\mathcal{\widehat{P}}(\cdot, \cdot)$  and
$\|\cdot\|_{\widehat{\mathcal{P}}}$  but  integration is performed
on the   physical domain (see   \eqref{eq:prec_bil} and \eqref{eq:prec_norm}).

 We first prove the equivalence between the norms $\|\cdot\|_{\mathcal{P}}$ and $\|\cdot \|_{\V_{0}}$ in $\V_{h,0}$. 

\begin{prop}
\label{prop:norm_eq}
{  Under Assumptions  \ref{ass:disc_spaces_continuity}--\ref{ass: single-patch-domain}, it holds} 
\[
  \frac{1}{C_{\Delta}}   \|v_h\|_{\mathcal{P}}^2\leq \|v_h\|_{\V_0}^2\leq  d  \|v_h\|_{\mathcal{P}}^2 \quad \forall v_h\in \V_{h,0},
\]
where $C_{\Delta}$ is the constant defined in \eqref{eq:elliptic-reg}.
\end{prop}
\begin{proof} 
  Given $v_h\in\V_{h,0}$, recalling
\eqref{eq:discrete-space-conformity} and thanks to \eqref{eq:elliptic-reg}, we have that  
\begin{align*}
  \sum^d_{k=1}\int_0^T \int_{\Omega}\left| \frac{\partial^2v_h}{\partial x_k^2}\right|^2 \,\d \Omega\,\dt  \,  &\leq  \int_{0}^T\int_{\Omega}\left(   \sum^d_{k,l=1} \left| \frac{\partial^2v_h}{\partial x_k \partial x_l}\right|^2\right) \,\d \Omega \,\dt   =  \int_0^T\left| v_h(\cdot, t) \right|^2_{H^2(\Omega)} \,\dt\\
  & \leq   \int_{0}^T\|v_h(\cdot, t) \|^2_{H^2(\Omega)}\,\dt  \leq
    C_{\Delta} 
    \int_{0}^T\|\Delta v_h(\cdot, t) \|^2_{L^2(\Omega)}\,\dt. 
\end{align*} 
Thus, the first inequality holds.   We  also have 
 \begin{align*}
\int_{0}^T\|\Delta v_h(\cdot, t)  \|^2_{L^2(\Omega)}\,\dt &  =  \sum^d_{k,l=1}   \int_0^T\int_{\Omega} \frac{\partial^2v_h}{\partial  x_k^2}  \frac{\partial^2v_h}{\partial x_l^2} \ \d\Omega\,\dt \\
&\leq \frac{1}{2}\sum^d_{k,l=1} \int_0^T \left[\bigg\|\frac{\partial^2v_h}{\partial  x_k^2}(\cdot, t)  \bigg\|_{L^2(\Omega)}^2  + \bigg\|\frac{\partial^2v_h}{\partial x_l^2}(\cdot, t) \bigg\|_{L^2(\Omega)}^2 \right]\,\dt \nonumber \\
&\leq  d \sum^d_{k=1}\int_{0}^T\bigg\|\frac{\partial^2v_h}{\partial x_k^2}  (\cdot, t) \bigg\|_{L^2(\Omega)}^2\,\dt  =d  \sum^d_{k=1}\int_0^T\int_{\Omega}\left| \frac{\partial^2v_h}{\partial x_k^2}\right|^2\,\d\Omega\,\dt %\label{eq:ineq_h2}
\end{align*} 
 and  we can conclude that the second inequality holds. 
\end{proof}

\begin{wn}
{  Under Assumptions  \ref{ass:disc_spaces_continuity}--\ref{ass: single-patch-domain}, it holds}  
\begin{equation}
  \frac{1}{C_{\Delta}}   \|v_h\|_{\mathcal{P}}^2\leq \mathcal{A}(v_h, v_h)\leq 2 d \|v_h\|_{\mathcal{P}}^2 \qquad \forall v_h\in\V_{h,0}.\label{eq:equiv_P}
\end{equation}
\end{wn}
\begin{proof}
The statement follows from Lemma \ref{lemma:cont}, Lemma \ref{lemma:coer} and Proposition \ref{prop:norm_eq}.
\end{proof}

\begin{prop}
\label{prop:norm_eq-reference}
{  Under Assumptions   \ref{ass:disc_spaces_continuity}--\ref{ass:  regularpatch-domain},   there}  exist   constants $Q_1,Q_2>0$ independent of  $h_s$, $h_t$,  $p_s$,
$p_t$,  but dependent on $\mathbf{G}$ 
 such that 
\[Q_1 \|{v}_h \|_{\mathcal{P}}^2\leq
\|\widehat{v}_h \|^2_{\widehat{\mathcal{P}}} \leq Q_2
\|{v}_h\|_{\mathcal{P}}^2 \quad \forall \widehat{v}_h \ \in \widehat{\V}_{h,0} \text{ and } v_h:=\widehat{v}_h\circ\mathbf{G}^{-1}.\]
\end{prop}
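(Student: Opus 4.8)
The plan is to treat the time part and the spatial part of the two forms separately, exploiting the product structure $\mathbf{G}(\vect{\eta},\tau)=(\mathbf{F}(\vect{\eta}),T\tau)$: since $\partial_\tau$ and $\partial_t$ do not couple to the spatial variables, and the second derivatives in \eqref{eq:prec_bil} act only on $\vect{\eta}$ (resp. on $\vecx$), the time and space contributions to $\|\cdot\|_{\widehat{\mathcal{P}}}$ and $\|\cdot\|_{\mathcal{P}}$ can be estimated independently. Throughout I would use that, by Assumptions \ref{ass: single-patch-domain}--\ref{ass: regularpatch-domain}, both $\mathbf{F}$ and $\mathbf{F}^{-1}$ have (piecewise) bounded derivatives up to order two and that $|\det J_{\mathbf{F}}|$ is pinched between two positive constants; every constant produced below depends only on these geometric quantities, on $T$, on $d$ and on the elliptic-regularity constant $\mathfrak{C}$, hence is independent of $h_s,h_t,p_s,p_t$. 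By symmetry, interchanging the roles of $\mathbf{F}$ and $\mathbf{F}^{-1}$ (both admissible, as both have bounded derivatives), it suffices to describe one of the two inequalities.

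The time part is routine. Since $v_h=\widehat{v}_h\circ\mathbf{G}^{-1}$ and $t=T\tau$, one has $\partial_\tau\widehat{v}_h=T\,(\partial_t v_h)\circ\mathbf{G}$; the change of variables $\vecx=\mathbf{F}(\vect{\eta})$, $t=T\tau$, with $\d\Omega=|\det J_{\mathbf{F}}|\,\d\widehat{\Omega}$ and $\dt=T\,\d\tau$, turns $\int_0^1\!\int_{\widehat{\Omega}}|\partial_\tau\widehat{v}_h|^2\,\d\widehat{\Omega}\,\d\tau$ into $T\int_0^T\!\int_\Omega|\partial_t v_h|^2|\det J_{\mathbf{F}}|^{-1}\,\d\Omega\,\dt$ (up to the factor $T$), and the Jacobian bounds give the equivalence of the two time integrals.

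The spatial part is the heart of the proof. First I would reduce the sum of pure second derivatives in \eqref{eq:prec_bil} to the full $H^2$-seminorm on each domain. On the physical side this is exactly the chain of inequalities used in Proposition \ref{prop:norm_eq}: by \eqref{eq:elliptic-reg}, $\int_0^T|v_h|_{H^2(\Omega)}^2\,\dt$ is equivalent, up to $\mathfrak{C}$ and $d$, to $\sum_k\int_0^T\|\partial^2 v_h/\partial x_k^2\|_{L^2(\Omega)}^2\,\dt$. On the reference side, since $\widehat{\Omega}=[0,1]^d$ is convex and each slice $\widehat{v}_h(\cdot,\tau)$ vanishes on $\partial\widehat{\Omega}$ (it lies in $\Vhspaceref$), the elliptic-regularity inequality holds with a purely dimensional constant, so $\int_0^1|\widehat{v}_h|_{H^2(\widehat{\Omega})}^2\,\d\tau$ is equivalent to the parametric sum. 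Having reduced everything to full Hessians, I would transform them by the chain rule: differentiating $\widehat{v}_h=v_h\circ\mathbf{F}$ at fixed $\tau$ twice gives, with $F_i$ the components of $\mathbf{F}$, $\partial^2_{\eta_k\eta_l}\widehat{v}_h=\sum_{i,j}(\partial^2_{x_ix_j}v_h)\,\partial_{\eta_k}F_i\,\partial_{\eta_l}F_j+\sum_i(\partial_{x_i}v_h)\,\partial^2_{\eta_k\eta_l}F_i$, and symmetrically with $\mathbf{F}^{-1}$. Bounding the coefficients by the derivative bounds on the parametrization and converting the integral by the Jacobian bounds yields, pointwise in $\tau$, $|\widehat{v}_h|_{H^2(\widehat{\Omega})}^2\lesssim|v_h|_{H^2(\Omega)}^2+|v_h|_{H^1(\Omega)}^2$ and its converse.

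The main obstacle is precisely the first-order term $|v_h|_{H^1(\Omega)}^2$ (resp. $|\widehat{v}_h|_{H^1(\widehat{\Omega})}^2$) generated by the second derivatives of the geometry map, which is not directly controlled by the $\mathcal{P}$-norm. Here I would invoke the homogeneous Dirichlet condition built into $\V_{h,0}$: since $v_h(\cdot,t)=0$ on $\partial\Omega$, integration by parts gives $|v_h|_{H^1(\Omega)}^2=-\int_\Omega v_h\,\Delta v_h\le\|v_h\|_{L^2(\Omega)}\|\Delta v_h\|_{L^2(\Omega)}$, and Poincar\'e's inequality $\|v_h\|_{L^2(\Omega)}\le C_P\,|v_h|_{H^1(\Omega)}$ then forces $|v_h|_{H^1(\Omega)}\le C_P\|\Delta v_h\|_{L^2(\Omega)}$; integrating in time and using \eqref{eq:elliptic-reg} once more shows $\int_0^T|v_h|_{H^1(\Omega)}^2\,\dt\lesssim\|v_h\|_{\mathcal{P}}^2$, with the analogous bound on $\widehat{\Omega}$. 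Absorbing the gradient terms in this way makes the spatial parts equivalent, and combining with the time part yields both inequalities with constants depending only on $\mathbf{G}$ and $d$, as claimed.
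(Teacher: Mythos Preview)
Your proposal is correct and follows essentially the same route as the paper: separate the time and space contributions via the product structure of $\mathbf{G}$, use the chain rule for second derivatives to relate $|\widehat v_h|_{H^2(\widehat\Omega)}$ and $|v_h|_{H^2(\Omega)}$ up to a first-order correction, and then absorb the gradient remainder using elliptic regularity on each domain. The only cosmetic difference is that the paper invokes the bound $\|v\|_{H^2}^2\le\mathfrak{C}\,\|\Delta v\|_{L^2}^2$ directly to control $|v|_{H^1}$, whereas you spell out the integration-by-parts/Poincar\'e argument for that step.
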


\begin{proof}  

Let $\widehat{v}_h\in\widehat{\V}_{h,0}$ and ${v}_h:=\widehat{v}_h\circ\mathbf{G}^{-1}\in{\V}_{h,0}.$
 First we   prove the first inequality.
Observing that  $\mathbf{G}^{-1}(\vect{x},t)=(\mathbf{F}^{-1}(\vect{x}),t/ T )$, we get
 \begin{align*} 
\int_0^T\int_{\Omega}(\partial_t{v}_h)^2 \ \d\Omega\ \dt & =\frac{1}{T}\int_0^1\int_{\widehat{\Omega}}( \partial_{\tau}\widehat{v}_h)^2\left| \det\left( J_{\F} \right) \right|\ \d\widehat{\Omega} \ \d\tau \leq \frac{1}{T} \sup_{\widehat{\Omega}}\left\{   \left| \det(J_{\F})\right|\right\}\int_0^1 \| \partial_{\tau}{\widehat{v}}_h (\cdot, \tau) \|^2 _{L^2(\widehat{\Omega})} \ \d\tau\\
 & \leq \frac{1}{T}\sup_{\widehat{\Omega}}\left\{  \left| \det(J_{\F})\right|\right\} \| \widehat{v}_h\|^2_{\widehat{\mathcal{P}}}.
\end{align*} 

Let $\vect{H}_{\widehat{v}_h}$ be the Hessian of
$\widehat{v}_h$  with respect to the spatial parametric variables $\eta_1, \dots,   \eta_d$, i.e. $\vect{H}_{\widehat{v}_h}\in \mathbb{R}^{d\times d}$ with  $[\vect{H}_{\widehat{v}_h}]_{i,j} = \frac{\partial^2\widehat{v}_h}{\partial \eta_i \partial \eta_j} $ for $i,j=1,\dots,d$, and let  $[J_{\F}^{-1}]_{\cdot,i}\in\mathbb{R}^d$  denote the $i$-$th$ column of $J_{\F}^{-1}$. Then, for $i=1,\dots,d$, it holds

  \begin{align*}
\int_{0}^T\int_{\Omega}  \left( \frac{\partial^2 v_h}{\partial x_i^2}\right)^2   \d\Omega \  \dt & = \int_0^1\int_{\widehat{\Omega}} \left([J_{\F}^{-1}]_{\cdot, i}^T \vect{H}_{\widehat{v}_h} [J_{\F}^{-1}]_{\cdot, i}  +\nabla \widehat{v}_h^T \frac{\partial [J_{\F}^{-1}]_{\cdot, i}}{\partial \eta_i}\right)^2  T| \det(J_{\F})|  \ \d\widehat{\Omega}\ \d\tau \\ 
&  \leq \int_0^1\int_{\widehat{\Omega}} \left( \widehat{C}_1
  \|\vect{H}_{\widehat{v}_h}\|_F ^2  + \widehat{C}_2   \|\nabla
  \widehat{v}_h\|_{ 2}  ^2 \right) \ \d\widehat{\Omega} \ \d\tau, 
\end{align*}

where  $\|\cdot\|_F$ and $\|\cdot\|_2$ denote the Frobenius norm and the  two-norm    of matrices { (the norm induced by the Euclidean vector norm),} respectively,  
$$\widehat{C}_1:=2T \max_{i} \sup_{\widehat{\Omega}}\left\{\left(\|[J_{\F}^{-1}]_{\cdot, i} \|_2\right)^4 | \det(J_{\F})|\right\},\quad\widehat{C}_2:= 2T \max_{i} \sup_{\widehat{\Omega}}\left\{\left(\bigg\| \frac{\partial [J_{\F}^{-1}]_{\cdot, i}}{\partial \eta_i}\bigg\|_2\right)^2 | \det(J_{\F})|\right\}$$
 and where we used that $ \|\vect{H}_{\widehat{v}_h}\|_2 \leq \|\vect{H}_{\widehat{v}_h}\|_F $.  

Following  the proof of Proposition \ref{prop:norm_eq},  we can prove that 
 \[\int_{0}^1\|\Delta \widehat{v}_h(\cdot, \tau) \|_{L^2(\widehat{\Omega})}^2\,\d\tau\leq d\|\widehat{v}_h\|_{\widehat{\mathcal{P}}}^2 \quad \forall \widehat{v}_h\in \widehat{\V}_{h,0}.\]
Thus  it holds 
\begin{align*}
\int_0^1\int_{\widehat{\Omega}}  \|\vect{H}_{\widehat{v}_h}\|_F^2 \ \d\widehat{\Omega}\ \d\tau &\leq  2     \int_0^1  | \widehat{v}_h(\cdot, \tau) |_{H^2(\widehat{\Omega})}^2 \ \d\tau   
   \leq 2\widehat{C}_{\Delta} 
     \int_0^1  \|\Delta \widehat{v}_h(\cdot, \tau)  \|_{L^2(\widehat{\Omega})}^2 \ \d\tau\   \leq  2d\widehat{C}_{\Delta}  
        \| \widehat{v}_h  \|_{\widehat{\mathcal{P}}}^2 \  
        \end{align*}
\begin{align*} \int_0^1\int_{\widehat{\Omega}}  \|\nabla \widehat{v}_h\|_2 ^2  \
  \d\widehat{\Omega}\ \d\tau & \ = 
  \int_0^1|\widehat{v}_h(\cdot, \tau) |_{H^1(\widehat{\Omega})}^2 \
    \d\tau \leq  \widehat{C}_{\Delta}  
      \int_0^1 \|\Delta \widehat{v}_h(\cdot, \tau) \|^2_{L^2(\widehat{\Omega})} \,\d\tau  \leq  d\widehat{C}_{\Delta}   
     \| \widehat{v}_h \|_{\widehat{\mathcal{P}}}^2,
     \end{align*}
where $\widehat{C}_{\Delta}>0$ is the constant such that $\|
z\|_{H^2 (\widehat{\Omega})}^2 \leq \widehat{C}_{\Delta}\|\Delta
z\|_{L^2 (\widehat{\Omega})}^2$, for  $ z \in  H^1_0
(\widehat{\Omega})\cap H^2 (\widehat{\Omega})$.
 Therefore, we  have  
 \begin{align*}
\int_{0}^T\int_{\Omega} \left( \frac{\partial^2 v_h}{\partial x_i^2}\right)^2 \ \d\Omega \  \dt & \leq  d\widehat{C}_{\Delta}
\left( 2\widehat{C}_1 + \widehat{C}_2\right)   \| \widehat{v}_h \|_{\widehat{\mathcal{P}}}^2
\end{align*} 
and, summing  all terms that define $\|\cdot\|_{{\mathcal{P}}}$, we
conclude
\[
Q_1 \|v_h\|_{\mathcal{P}}^2\leq
\|\widehat{v}_h  \|^2_{\widehat{\mathcal{P}}}
\]
with $\frac{1}{Q_1}:=  \frac{1}{T}\sup_{\widehat{\Omega}}\left\{  \left| \det(J_{\F})\right|\right\} + d^2\widehat{C}_{\Delta}\left( 2\widehat{C}_1 + \widehat{C}_2\right)   $.

Now we prove the other bound.  We   observe that $\widehat{v}_h=v_h\circ\mathbf{G}$ and  $\mathbf{G}(\vect{\eta},\tau)=(\mathbf{F}(\vect{\eta}),T\tau )$. Thus,     with  similar arguments and using \eqref{eq:elliptic-reg},   we have
\[  \int_0^1\int_{\widehat{\Omega}}\partial_{\tau}{\widehat{v}}^2_h\ \d \widehat{\Omega}\ \d\tau \leq T \sup_{{\Omega}}\left\{  \left| \det(J_{\F^{-1}})\right|\right\} \| {v}_h\|^2_{{\mathcal{P}}}\]   and  \[
  \int_{0}^1\int_{\widehat{\Omega}} \left( \frac{\partial^2 \widehat{v}_h}{\partial \eta_i^2}\right)^2 \ \d\widehat{\Omega} \  \d\tau  \leq     d  C_{\Delta}\left(  2C_1 + C_2 \right) \| {v}_h \|_{\mathcal{P}}^2,\]
where $ {C}_1:=2\frac{1}{T}\max_{i} {\sup}_{\Omega}\left\{\left(\|[J_{\F^{-1}}^{-1}]_{\cdot, i} \|_2\right)^4 | \det(J_{\F^{-1}})|\right\}$ and \\ $ {C}_2:= 2\frac{1}{T} \max_{i}\sup_{\Omega}\left\{\left(\bigg\| \frac{\partial [J_{\F^{-1}}^{-1}]_{\cdot, i}}{\partial \eta_i}\bigg\|_2\right)^2 | \det(J_{\F^{-1}})|\right\}$.
We conclude that
\[ \|\widehat{v}_h\|_{\widehat{\mathcal{P}}}^2\leq Q_2 \| {v}_h \|_{\mathcal{P}}^2 \]
with ${Q_2}:=  {T}\sup_{ {\Omega}}\left\{  \left| \det(J_{\F^{-1}})\right|\right\} + d^2{C_{\Delta}}  \left(2 {C}_1 +  {C}_2 \right)  $.
\end{proof}
\begin{tw}
\label{teo:spec_est}   
   Under Assumptions  \ref{ass:disc_spaces_continuity}--\ref{ass:  regularpatch-domain},  it   holds
\[ \theta \leq \lambda_{\min}(\vect{P}^{-1}\vect{A}), \qquad \lambda_{\max}(\vect{P}^{-1}\vect{A})\leq \Theta, \]
where $\theta$ and $\Theta$ are positive constants that do not depend on $h_s$, $h_t$, $p_s$ and $p_t$.
\end{tw}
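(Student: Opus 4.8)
The plan is to characterize the extreme eigenvalues of $\vect{P}^{-1}\vect{A}$ through the generalized Rayleigh quotient. Since both $\vect{A}$ and $\vect{P}$ are symmetric and positive definite, for every nonzero coefficient vector $\vect{v}\in\mathbb{R}^{N_{dof}}$ one has
\begin{equation*}
\lambda_{\min}(\vect{P}^{-1}\vect{A})=\min_{\vect{v}\neq 0}\frac{\vect{v}^T\vect{A}\vect{v}}{\vect{v}^T\vect{P}\vect{v}},\qquad \lambda_{\max}(\vect{P}^{-1}\vect{A})=\max_{\vect{v}\neq 0}\frac{\vect{v}^T\vect{A}\vect{v}}{\vect{v}^T\vect{P}\vect{v}}.
\end{equation*}
First I would translate both quadratic forms back to the functional setting. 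Writing $v_h:=\sum_i[\vect{v}]_iB_{i,\vect{p}}\in\V_{h,0}$ and $\widehat{v}_h:=\sum_i[\vect{v}]_i\widehat{B}_{i,\vect{p}}\in\widehat{\V}_{h,0}$ for one and the same coefficient vector $\vect{v}$, the definitions of $\vect{A}$ and $\vect{P}$ give $\vect{v}^T\vect{A}\vect{v}=\mathcal{A}(v_h,v_h)$ and $\vect{v}^T\vect{P}\vect{v}=\|\widehat{v}_h\|_{\widehat{\mathcal{P}}}^2$. The essential point making the two functions comparable is that $B_{i,\vect{p}}=\widehat{B}_{i,\vect{p}}\circ\mathbf{G}^{-1}$, so $v_h=\widehat{v}_h\circ\mathbf{G}^{-1}$, which is exactly the hypothesis under which Proposition \ref{prop:norm_eq-reference} applies.

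Then I would simply chain the two norm equivalences already established. The Corollary \eqref{eq:equiv_P} controls $\mathcal{A}(v_h,v_h)$ by $\|v_h\|_{\mathcal{P}}^2$ from both sides, while Proposition \ref{prop:norm_eq-reference} relates $\|\widehat{v}_h\|_{\widehat{\mathcal{P}}}^2$ to $\|v_h\|_{\mathcal{P}}^2$ from both sides. Combining them,
\begin{equation*}
\frac{1}{\mathfrak{C}Q_2}\;\leq\;\frac{\mathcal{A}(v_h,v_h)}{\|\widehat{v}_h\|_{\widehat{\mathcal{P}}}^2}\;\leq\;\frac{2d}{Q_1},
\end{equation*}
so the theorem holds with $\theta:=1/(\mathfrak{C}Q_2)$ and $\Theta:=2d/Q_1$. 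Since $Q_1,Q_2$ are independent of $h_s,h_t,p_s,p_t$ by Proposition \ref{prop:norm_eq-reference}, $d$ is fixed, and $\mathfrak{C}$ depends only on the spatial parametrization $\mathbf{F}$, the resulting bounds are robust in meshsize and degree, as claimed.

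The proof is essentially a bookkeeping step: all the analytical work has been front-loaded into \eqref{eq:equiv_P} and Proposition \ref{prop:norm_eq-reference}. Consequently there is no genuine obstacle, but the one point requiring care is the correct bridging of the physical and parametric pictures, namely that the same algebraic vector $\vect{v}$ yields $\vect{v}^T\vect{A}\vect{v}=\mathcal{A}(v_h,v_h)$ computed on the physical domain, yet $\vect{v}^T\vect{P}\vect{v}=\|\widehat{v}_h\|_{\widehat{\mathcal{P}}}^2$ computed on the parametric domain, with the two functions linked precisely by the pull-back $\widehat{v}_h=v_h\circ\mathbf{G}$. Getting this identification right is what allows Proposition \ref{prop:norm_eq-reference} to be invoked; without it the estimate for $\vect{v}^T\vect{P}\vect{v}$ would not connect to $\|v_h\|_{\mathcal{P}}^2$ and the two equivalences could not be composed.
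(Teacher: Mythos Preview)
Your proof is correct and matches the paper's argument essentially step by step: the paper likewise invokes the Courant--Fischer characterization of the extreme eigenvalues, identifies $\mathbf{v}^T\vect{A}\mathbf{v}=\mathcal{A}(v_h,v_h)$ and $\mathbf{v}^T\vect{P}\mathbf{v}=\|\widehat{v}_h\|_{\widehat{\mathcal{P}}}^2$ via the pull-back $v_h=\widehat{v}_h\circ\mathbf{G}^{-1}$, and then chains \eqref{eq:equiv_P} with Proposition~\ref{prop:norm_eq-reference} to obtain the same constants $\theta=1/(\mathfrak{C}Q_2)$ and $\Theta=2d/Q_1$.
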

\begin{proof}
Let $\widehat{v}_h\in \widehat{\V}_{h,0}$, ${\mathbf{v}}$ its coordinate vector with respect to the basis \eqref{eq:all_basis} and $v_h=\widehat{v}_h\circ \mathbf{G}^{-1}\in \V_{h,0}$. Thanks to Courant-Fischer theorem, { we have to show that there are bounds $\theta$ and $\Theta$ such that 
\[ \theta \leq \frac{\mathbf{v}^T \vect{A} \mathbf{v}}{\mathbf{v}^T \vect{P} \mathbf{v}}\leq \Theta \]
holds for all $\textbf{v}.$}
Equivalently, using \eqref{eq:equiv_P}  and noting that $\mathbf{v}^T \vect{A} \mathbf{v} = \mathcal{A}(v_h, v_h)$ and  $\mathbf{v}^T \vect{P} \mathbf{v} = \widehat{\mathcal{P}}(\widehat{v}_h, \widehat{v}_h) = \|\widehat{v}_h\|^2_{\widehat{\mathcal{P}}}$,
it is sufficient to {  show that there are bounds   $\theta$ and $\Theta$ such that }
\[  {\theta}{C_{\Delta}}  \leq \frac{\|v_h\|^2_{\mathcal{P}}}{\|\widehat{v}_h\|^2_{\widehat{\mathcal{P}}}} \leq \frac{\Theta}{2 d } \quad \forall \widehat{v}_h\in\widehat{\V}_{h,0}, \]
with $v_h=\widehat{v}_h \circ \mathbf{G}^{-1}$.
Using  Proposition \ref{prop:norm_eq-reference}, we can conclude that the previous inequalities hold with  $\theta:=\frac{1}{C_{\Delta}Q_2}$   and $\Theta:=\frac{2d}{Q_1}$.
\end{proof}

\subsection{Preconditioner implementation by fast diagonalization}

The application of the preconditioner is a solution of a Sylvester-like equation: given $\mathbf{r}$ find $\mathbf{s}$ such that
\begin{equation}
\label{eq:prec_sys}
\vect{P}\mathbf{s}=\mathbf{r}.
\end{equation}

Following \cite{Sangalli2016}, to solve \eqref{eq:prec_sys}, we use  the fast diagonalization (FD) method (see  \cite{Deville2002} and \cite{Lynch1964} for further details). It is a direct method that, at the first step, computes the eigendecomposition of the pencils $(\widehat{M}_i, \widehat{J}_i)$ for $i=1,\dots,d$ and of $(\widehat{M}_t, \widehat{K}_t)$, i.e.  
\begin{equation}
\label{eq:eig_dec}
 \widehat{J}_iU_i=\widehat{M}_iU_i\Lambda_i, \qquad \qquad \widehat{K}_tU_t=\widehat{M}_tU_t\Lambda_t  
\end{equation}
 where $\Lambda_i$ and $\Lambda_t$ are diagonal eigenvalue matrices  while the columns of $U_i$ and $U_t$ contain the corresponding generalized eigenvectors and they are such that
 \[ \widehat{M}_i=U_i^{-T}U_i^{-1}, \qquad \widehat{J}_i=U_i^{-T}\Lambda_i U_i^{-1}, \qquad \widehat{M}_t=U_t^{-T}U_t^{-1}, \qquad \widehat{K}_t=U_t^{-T}\Lambda_t U_t^{-1}.\]
Then, we can rewrite $\widehat{M}_s$   as
\[\begin{array}{rlr}
\widehat{M}_s & = (U_{d}^{-T}U_{d}^{-1})\otimes\dots\otimes(U_1^{-T}U_1^{-1})  = (U_d^{-T}\otimes\dots\otimes U_1^{-T})(U_d^{-1}\otimes\dots\otimes U_1^{-1}), \\ % & \text{using \eqref{eq:kron_prod},}\smallskip\\
& = (U_d \otimes\dots\otimes U_1)^{-T}(U_d \otimes\dots\otimes U_1)^{-1} = U_s^{-T}U_s^{-1},
\end{array}\]
where $U_s := U_d \otimes \dots \otimes U_1$ and where we used \eqref{eq:kron_prod} for the first equality and  \eqref{eq:kron_transp} and \eqref{eq:kron_inv} for the second equality above. Similarly, denoting with $I_m\in\mathbb{R}^{m\times m}$   the identity matrix of size $m$ and defining  $\Lambda_s:=\sum_{i=1}^dI_{n_s^{i-1}}\otimes \Lambda_i \otimes I_{n_s^{d-i}}$, we rewrite $\widetilde{J}_s$ as
 \begin{align*} 
\widetilde{J}_s \! & =  \sum_{i=1}^d (U_d^{-T}U_d^{-1})\otimes\dots\otimes(U_{i+1}^{-T}U_{i+1}^{-1})\otimes(U_{i }^{-T}\Lambda_i U_{i}^{-1})\otimes(U_{i-1}^{-T}U_{i-1}^{-1})  \otimes\cdots\otimes (U_{1}^{-T}U_{1}^{-1})  \\ 
& = \sum_{i=1}^d (U_d^{-T}\otimes\dots\otimes U_{ 1}^{-T}) ( I_{n_s^{i-1}}\otimes \Lambda_i \otimes I_{n_s^{d-i}}) (U_d^{-1}\otimes\dots\otimes U_{ 1}^{-1}) ,  \\
& = U_s^{-T} \otimes\Lambda_s \otimes U_s^{-1},     
\end{align*} 
 where we used      \eqref{eq:kron_prod} for the second equality and \eqref{eq:kron_transp}, \eqref{eq:kron_prod} and \eqref{eq:kron_inv}  for the third equality above.
 Then, $\vect{P}$ can be factorized as  
\[
 \vect{P}=(U_t\otimes U_s)^{-T}( \Lambda_t \otimes   I_{n^d_s}  + I_{n_t}\otimes \Lambda_s)(U_t\otimes U_s)^{-1},
\]
 where we  used  \eqref{eq:kron_transp}, \eqref{eq:kron_prod} and   \eqref{eq:kron_inv}.  
 Therefore,  after introducing the tensors $\mathfrak{R},  \widetilde{\mathfrak{Q}}\in\mathbb{R}^{n_{s,1}\times\dots n_{s,d}\times n_t}$  s.t.  $\text{vec}\left( {\mathfrak{R}}\right)=\mathbf{r}$ and $\text{vec}\left(\widetilde{\mathfrak{Q}}\right)=\mathbf{\widetilde{q}}$, the solution of \eqref{eq:prec_sys} can be obtained by the following algorithm.

\begin{algorithm}[H]
\caption{  FD method }\label{al:direct_P}
\begin{algorithmic}[1]
\State Compute the generalized eigendecompositions \eqref{eq:eig_dec}.
\State Compute $\widetilde{\mathbf{r}} = (U_t\otimes U_s)^T \mathbf{r}  = {(U_t\otimes U_d \otimes \dots\otimes U_1)^T \mathbf{r}}  = \mathfrak{R}\times_1U_1^T\dots\times_{d+1}U_t^T $.
\State Compute $\widetilde{\mathbf{q}} = \left(\Lambda_t \otimes  I_{n^d_s} + I_{n_t}\otimes \Lambda_s \right)^{-1} \widetilde{\mathbf{r}}. $
\State Compute $\mathbf{s} =  (U_t\otimes U_s) \ \widetilde{\mathbf{q}} =  (U_t\otimes U_d \otimes \dots\otimes U_1)\ \widetilde{\mathbf{q}}   = \widetilde{\mathfrak{Q}}\times_1 U_1\dots\times_{d+1}U_t . $
\end{algorithmic}
\end{algorithm}

\subsection{Inclusion of the geometry information in the preconditioner}
\label{sec:geo_inclusion}
The spectral estimates in Section \ref{sec:spec_est} show the
dependence on   $\mathbf{G}$ (see the proof of  Theorem \ref{teo:spec_est}):
 the geometry parametrization affects the performance of our
 preconditioner \eqref{eq:preconditioner}, as it is confirmed  by the numerical tests in Section \ref{sec:num_test}. 
In this section, we present a strategy to partially incorporate
$\mathbf{G}$ in the preconditioner, without increasing its
computational cost. The same idea has been used in
\cite{Montardini2018} for the Stokes problem.

We begin by splitting the bilinear form $\mathcal{A}(\cdot, \cdot)$   as 
\begin{equation*}
 \mathcal{A}(v_h, w_h) =   \mathcal{K}_t (v_h,w_h) + \mathcal{K}_s(v_h,w_h)  - \mathcal{O}(v_h,w_h) \qquad \forall v_h,w_h\in \V_{h,0}
\end{equation*}
 where 
 \begin{equation*}
  \mathcal{K}_t(v_h,w_h)  \ := \  \int_0^T\int_{\Omega} \partial_{t}v_h\,\partial_{t}w_h\ \d\Omega \ \dt,   \qquad 
  \mathcal{K}_s(v_h,w_h)  \ := \  \int_0^T \int_{\Omega} \Delta  v_h\, \Delta  w_h \ \d\Omega \ \dt,   
\end{equation*}
 \begin{equation*}
  \mathcal{O}(v_h,w_h)  :=  \int_0^T \int_{\Omega}( \partial_{t}{v}_h\,\Delta w_h + \partial_t{w}_h\,\Delta v_h) \ \d\Omega \ \dt.  
 \end{equation*}
Using that ${v}_h:=\widehat{v}_h\circ\mathbf{G}^{-1}$, ${w}_h:=\widehat{w}_h\circ\mathbf{G}^{-1}$ and 
 \begin{equation*}
  \frac{\partial^2 v_h}{\partial x_i^2}= \sum_{j,k=1}^d \frac{\partial^2 \widehat{v}_h \circ\mathbf{G}^{-1} }{\partial \eta_j  \partial \eta_k}   [J_{\F }^{-1}]_{ki}[J_{\F }^{-1}]_{ji}+ \sum_{j=1}^d\frac{\partial \widehat{v}_h \circ \mathbf{G}^{-1}}{\partial \eta_{j}}\frac{\partial [J_{\F }^{-1}]_{ji}}{\partial \eta_i },
   \end{equation*} we can rewrite $\mathcal{K}_t$ and $\mathcal{K}_s$ as
 \begin{equation*}
\mathcal{K}_t(v_h,w_h)  \  =   \int_{0}^1\int_{\widehat{\Omega}}    c_{d+1}  \partial_{\tau}\widehat{v}_h\, \partial_{\tau}\widehat{w}_h  \ \d\widehat{\Omega}\ \d\tau \quad
\mathcal{K}_s(v_h,w_h) \ =  \mathcal{K}_{s,1}(\widehat{v}_h,\widehat{w}_h)\   + \mathcal{K}_{s,2} (\widehat{v}_h,\widehat{w}_h) ,
 \end{equation*}
where 
\begin{align*}
  \mathcal{K}_{s,1}(\widehat{v}_h,\widehat{w}_h)  := & \sum_{k=1}^d \int_0^1\int_{\widehat{\Omega}} c_k\frac{\partial^2 \widehat{v}_h }{\partial \eta_k^2} \frac{\partial^2 \widehat{w}_h }{\partial \eta_k^2} \ \d\widehat{\Omega} \ \d\tau, \\
   \mathcal{K}_{s,2}(\widehat{v}_h,\widehat{w}_h)  := & \sum_{\substack{r,s=1 \\ r\neq s}}^d \sum_{\substack{j,k=1 \\ j\neq k}}^d \int_0^1\int_{\widehat{\Omega}}   g^1_{rsjk} \frac{\partial^2 \widehat{v}_h }{\partial \eta_k \partial \eta_j} \frac{\partial^2 \widehat{w}_h }{\partial \eta_r \partial \eta_s}   \, \d\widehat{\Omega}\,\dt  + \sum_{\substack{j,k=1  }}^d\int_{0}^1\int_{\widehat{\Omega}}  g^2_{jk} \frac{\partial  \widehat{v}_h }{\partial \eta_k  } \frac{\partial  \widehat{w}_h }{\partial \eta_j }    \,\d\widehat{\Omega}\,\dt \\
   & + \sum_{ {r =1  }}^d \sum_{\substack{j,k=1  }}^d\int_{0}^1\int_{\widehat{\Omega}}  g^3_{rjk} \left( \frac{\partial^2 \widehat{v}_h }{\partial \eta_k \partial \eta_j} \frac{\partial  \widehat{w}_h }{\partial \eta_r  } +\frac{\partial^2 \widehat{w}_h }{\partial \eta_k \partial \eta_j} \frac{\partial  \widehat{v}_h }{\partial \eta_r  }   \right)  \,\d\widehat{\Omega}\,\dt 
\end{align*} 
and where we have defined
\begin{equation}
  \label{eq:coefficients-c}
  c_k:= \left(\big\|   [J_{\F}^{-1} ]_{\cdot,k} \big\|_2\right)^4  |\det(J_{\mathbf{F}})|T \quad\text{for $k=1,\dots,d$}, \quad  c_{d+1}:=|\det(J_{\mathbf{F}})|T^{-1}, 
\end{equation}
while   $g^1_{rsjk}, g^2_{jk},  g^3_{rjk}$ are functions that depend on the parametrization $\mathbf{G}$.

The preconditioner will be based on an approximation of
$\mathcal{K}_t + \mathcal{K}_{s,1}$ only. 
In particular we  approximate $c_{k}$, for 
 $k=1,\dots,d+1$ as 
\begin{equation}
  \label{eq:coefficient-approx}
  \begin{aligned}
  &  c_{k}(\vect{\eta},\tau)  \approx
  \mu_1(\eta_1)\dots\mu_{k-1}(\eta_{k-1})\omega_k(\eta_k)\mu_{k+1}(\eta_{k+1})\dots\mu_d(\eta_d)  \mu_{d+1}(\tau)   \quad k=1,\dots,d, 
  \\  & c_{d+1}(\vect{\eta},\tau) \approx \mu_1(\eta_1)\dots\mu_d(\eta_d) \omega_{d+1}(\tau). 
  \end{aligned}
\end{equation}
The functions $c_{k}$ %and $c_{\tau}$ 
 in \eqref{eq:coefficient-approx}
are first interpolated  by constants in each element and then the
construction of the univariate factors $ \mu_{k}$, and $ \omega_{k}$
is performed by the separation of variable algorithm detailed in  the Appendix \ref{app:sep_var}.
 The  resulting computational cost is therefore proportional to the number of elements, 
  which for smooth  splines is roughly equal to $N_{dof}$, and independent of the degrees $p_s$ and $p_t$.
 
As a consequence,  
 the computation of  \eqref{eq:coefficient-approx} has a negligible cost in the whole iterative strategy.
This first  step leads to a matrix of this form
 \begin{equation*}
\overline{\vect{P}^{\mathbf{G}}} :=  \widehat{K}^{\mathbf{G}}_t \otimes  \widehat{M}^{\mathbf{G}}_s + \widehat{M}^{\mathbf{G}}_t\otimes \widetilde{J}_s^{\mathbf{G}}, 
 \end{equation*}
where, referring to \eqref{eq:basis_par} for the notation of the basis functions,
 \begin{equation*}
\left[ \widehat{K}^{\mathbf{G}}_t\right]_{i,j}  :=\int_0^1 \omega_{d+1} (\tau)\,\widehat{b}'_{i,p_t}(\tau)\,\widehat{b}'_{j,p_t}(\tau)\,\d\tau,   
\quad 
\left[\widehat{M}^{\mathbf{G}}_t\right]_{i,j}   :=\int_0^1 \mu_{d+1} (\tau)\,\widehat{b}_{i,p_t}(\tau)\,\widehat{b}_{j,p_t}(\tau)\,\d\tau \quad    i,j=1,\dots,n_t,  
\end{equation*}
 \begin{equation*}
\widetilde{J}^{\mathbf{G}}_s := \sum_{k=1}^d \widehat{M}^{\mathbf{G}}_d\otimes \dots\otimes\widehat{M}^{\mathbf{G}}_{k+1}\otimes \widehat{J}^{\mathbf{G}}_k\otimes\widehat{M}^{\mathbf{G}}_{k-1}\otimes\dots \otimes\widehat{M}^{\mathbf{G}}_1 , \quad 
 \widehat{M}_s^{\mathbf{G}} :=\widehat{M}^{\mathbf{G}}_d \otimes\dots \otimes\widehat{M}^{\mathbf{G}}_1,
 \end{equation*}
with for $i,j=1,\dots,n_{s,k} $ and $k=1,\dots,d$,
 \begin{equation*}
[ \widehat{J}^{\mathbf{G}}_k]_{i,j} := \int_{0}^1 \omega_k(\eta_k)\, \widehat{b}_{i,p_s}''( {\eta}_k)\,\widehat{b}_{j,p_s}''( {\eta}_k) \,\d{\eta}_k, \quad  [ \widehat{M}^{\mathbf{G}}_k]_{i,j} := \int_{0}^1 \mu_k(\eta_k)\, \widehat{b}_{i,p_s}( {\eta}_k)\,\widehat{b}_{j,p_s}( {\eta}_k) \,\d {\eta}_k.
 \end{equation*}
The matrix $\overline{\vect{P}^{\mathbf{G}}}$ maintains the Kronecker structure of \eqref{eq:preconditioner} and     Algorithm \ref{al:direct_P} can still be used to compute its application.

Finally, as in \cite{Montardini2018}, we apply a diagonal scaling and  we define the   preconditioner as $\vect{P}^{\mathbf{G}}:=\vect{D}^{1/2}\ \overline{\vect{P}^{\mathbf{G}}}\ \vect{D}^{1/2}$ where $\vect{D}$ is the diagonal matrix whose diagonal entries are $[\vect{D}]_{i,i}:=[\vect{A}]_{i,i}/[\overline{\vect{P}^{\mathbf{G}}}]_{i,i}$.

\begin{rmk}
For the model problem considered in this paper,  the approximation of the geometry parametrization  in the time direction is
trivial. Notice that the coefficients in  \eqref{eq:coefficients-c} do
not depend on $\tau$.   Indeed, in our case it holds  
\[ K_t =  \frac{1}{T}    \widehat{K}_t, \qquad M_t = T \widehat{M}_t, \]\B
and hence we could set  explicitly $\widehat{K}_t^{\mathbf{G}} = K_t$
and $\widehat{M}_t^{\mathbf{G}} = M_t $, which is exact.   However, 
we want to present the more general approximating strategy above which could be used
also when the spatial geometry or  equation's coefficients depend on time.
\end{rmk}

\subsection{Computational cost and memory consumption of the linear solver}
 
The cost of our preconditioning strategies consists of two parts: setup cost and application cost.
 
The setup cost of both $\vect{P}$ and $\vect{P}^{\mathbf{G}}$ includes the eigendecomposition
of the pencils $(\widehat{J}_i, \widehat{M}_i)$ and  $(\widehat{K}_t, \widehat{M}_t)$ or $(\widehat{J}^{\mathbf{G}}_i, \widehat{M}^{\mathbf{G}}_i)$   and $(\widehat{K}^{\mathbf{G}}_t, \widehat{M}^{\mathbf{G}}_t)$, respectively, that is,  Step 1 of Algorithm
1. If we assume for simplicity that $\widehat{J}_i, \widehat{M}_i$, $\widehat{J}^{\mathbf{G}}_i, \widehat{M}^{\mathbf{G}}_i$ for $i=1,\dots,d$ have size $n_s 
\times n_s$ and that  $\widehat{K}_t$, $\widehat{M}_t$, $\widehat{K}_t^{\mathbf{G}}$ and $\widehat{M}_t^{\mathbf{G}}$  have size $n_t \times n_t$,
then the cost of the eigendecomposition is   $O(dn_s^3+n_t^3)$ FLOPs. {    This cost is optimal  for  $d=2$
and negligible for $d=3$, provided that $n_t\approx n_s$}. For $\vect{P}^{\mathbf{G}}$, we also have to include in the setup cost the creation of the diagonal matrix $\vect{D}$, which is negligible,    and 
the construction  of
the $2(d+1)$ univariate approximations $\mu_1, \ldots,   \mu_{d+1}$  
and $\omega_1, \ldots,  \omega_{d+1} $, that are used to
incorporate some geometry information into the preconditioner. As
explained in   Section \ref{sec:geo_inclusion}, this has a cost which
is $O(N_{dof})$ FLOPs.

The application of   $\vect{P}$ and   $\overline{\vect{P}^{\mathbf{G}}}$,    is performed by
Algorithm \ref{al:direct_P},  Steps 2--4.   First we note that the time matrices $\widehat{K}_t, \widehat{M}_t, \widehat{K}^{\mathbf{G}}_t, \widehat{M}^{\mathbf{G}}_t $ and  the spatial matrices $\widehat{J}_i, \widehat{M}_i, \widehat{J}^{\mathbf{G}}_i,$ $ \widehat{M}^{\mathbf{G}}_i$ for $i=1,\dots,d $ are banded matrices with band of width $2p_t+1$ and $2p_s+1$, respectively.  
Then, Step 2  and Step 4 are efficiently performed exploiting property \eqref{eq:kron_vec_multi} and they need a total of $4(dn_s^{d+1}n_t+n_t^2n_s^{d})=4N_{dof}(dn_s+n_t)$ FLOPs, while  Step 3  has  an  optimal  cost, as it requires $O(N_{dof})$ FLOPs.
 Thus, the total cost of Algorithm 1 is $4N_{dof}(dn_s+n_t)+O(N_{dof}) $ FLOPs. 
 The non-optimal dominant cost is given by the dense matrix-matrix
 products of Step 2 and Step 4, which, however, are usually
 implemented on modern computers in a high-efficient way, as they are
 BLAS level 3 operations. 
In our numerical tests, the overall serial computational 
 time grows almost as $O(N_{dof})$ up to the largest problem considered, 
as we will show in  Section \ref{sec:num_test}.

Clearly, the computational cost of each iteration of the CG solver
depends on both the preconditioner application and the residual computation.
  For the sake of completeness, we also discuss the 
cost of the residual computation, which consists in the multiplication between  $\vect{A}$
and a vector.  
Note that this multiplication can be   computed by exploiting the
special structure \eqref{eq:A_kron} and the formula
\eqref{eq:kron_vec}. In this case, we do not need to compute and store the whole matrix $\vect{A}$, but only its factors $K_t$, $W_t$, $M_t$, $J_s$, $L_s$ and $M_s$.     With this matrix-free approach, noting  that the time matrices $K_t$, $W_t$, $M_t$ are banded matrices with a band of width $2p_t + 1$ and the spatial  matrices $J_s$, $L_s$, $M_s$  have a number of non-zeros per row approximately equal to $(2p_s + 1)^d$,  the computational cost of a single matrix-vector product is $6\left[(2p_s+1)^{d}+2p_t+1 \right]N_{dof}\approx 6(2p+1)^{d} N_{dof}$, if   $p = p_s \approx  p_t$.   
Even if this cost is lower than what one would get by using $\vect{A}$
explicitly, the comparison with the cost of the preconditioner  shows that the residual computation easily turns out to be the dominant cost of the iterative solver (see Table \ref{tab:appl_matrix} in Section \ref{sec:num_test}). This issue was already recognized in \cite{Sangalli2016,Montardini2018}.

We now analyze the memory consumption. For the preconditioner, we need to store the eigenvector matrices $U_t, U_1, \ldots, U_d$ and the diagonal eigenvalue matrix $\left(\Lambda_t \otimes  I_{n^d_s} + I_{n_t}\otimes \Lambda_s \right)$. The memory required is 
\[ n_t^2 + d n_s^2 + N_{dof}. \]
For the system matrix, we need to store the matrices $K_t$, $M_t$, $M_s$, $L_s$ and $J_s$ (the storage of $W_t$ is negligible). The memory required is roughly
\[ 2 \left(2 p_t +1\right) n_t + 3 \left(2 p_s +1\right)^d N_s. \]
These numbers show that memory-wise our space-time strategy is very
appealing when compared to other approaches, even when space and time
variables are discretized separately, e.g.,   with finite differences in
time or other  time-stepping schemes. 
To see this, take  $d=3$ and $p_t \approx p_s = p$, and assume  $n^2_t
\leq C p^3 N_s$.  In this case, the total memory consumption is then
$ O\left( p^3 N_s + N_{dof} \right)$ which is the memory required to store the Galerkin matrices associated to   spatial variables, \B  plus the memory required to store the solution of the problem.

We emphasize that it is possible, though beyond the scope of this paper, to take the matrix-free paradigm one step further by using the approach developed in \cite{Sangalli2017}. Using this approach, where even the factors of $\vect{A}$ as in \eqref{eq:A_kron} are not needed, would significantly improve the overall iterative solver in terms of memory and computational cost (both for the setup and for the matrix-vector computations).

\section{Numerical benchmarks} 
\label{sec:num_test}
In this Section, we show numerical experiments that confirm the
convergence behaviour \eqref{eq:a_priori_estimate} of the
least-squares approximation method defined in Section
\ref{sec:least_squares}, and then
  we present some numerical results regarding the performance of our preconditioner.
   
The tests are performed with   Matlab R2015a \B  and  GeoPDEs toolbox \cite{Vazquez2016}, on a Intel Core
 i7-5820K processor, running at 3.30 GHz, with 64 GB of RAM.

In Algorithm \ref{al:direct_P}, the eigendecomposition of Step 1 is done by \texttt{eig} Matlab function, while the multiplications of Kronecker matrices, appearing in Step 2 and 4, are performed by Tensorlab toolbox \cite{Sorber2014}.
 We fix the tolerance of CG equal to  $10^{-8}$ and the initial guess equal to the null vector in all tests. 
 
We set  $h_s=h_t=:h$, and we denote the number of
 { subdivisions in each parametric direction  by $n_{sub}$.}

\begin{figure}
 \centering
 \subfloat[][Quarter of annulus.\label{fig:quarter}]
   {\includegraphics[scale=0.17]{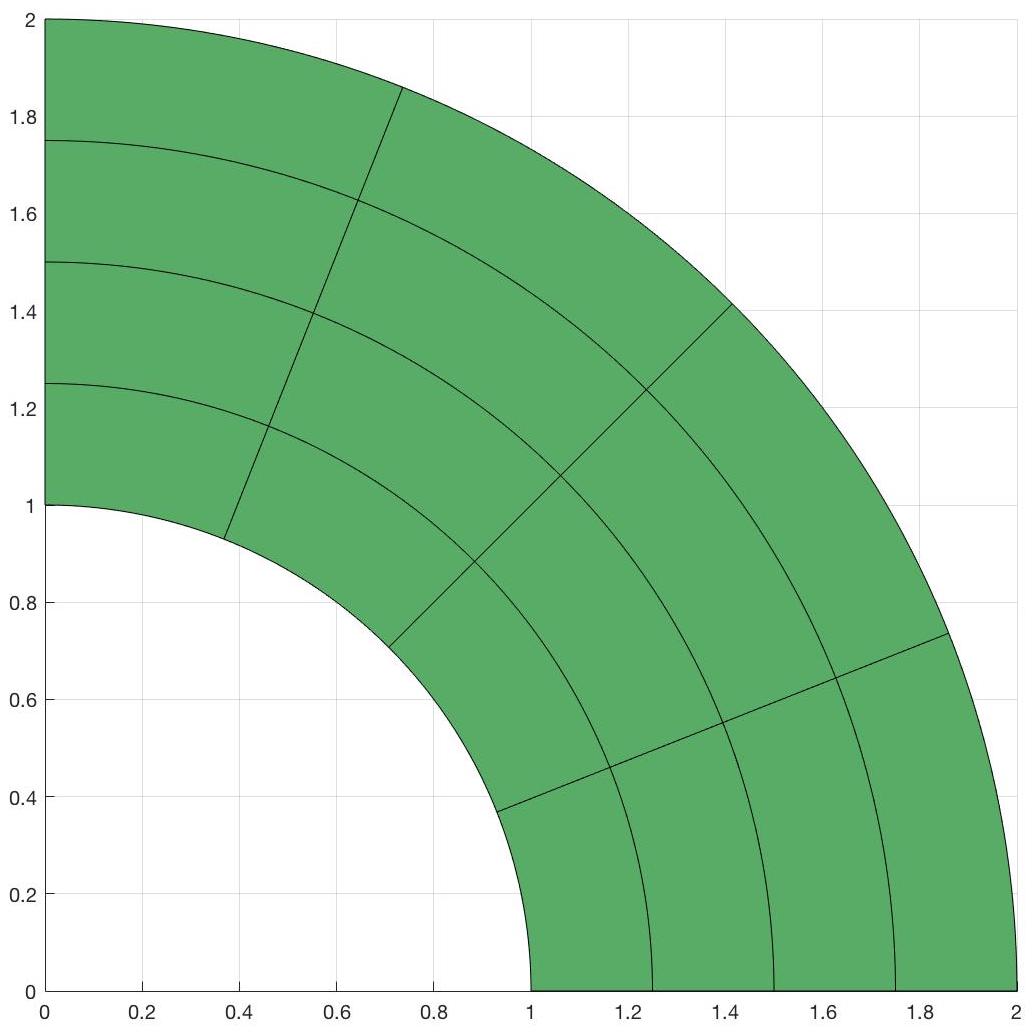}}\quad
 \subfloat[][Cube.\label{fig:cube}]
   {\includegraphics[ scale=0.17]{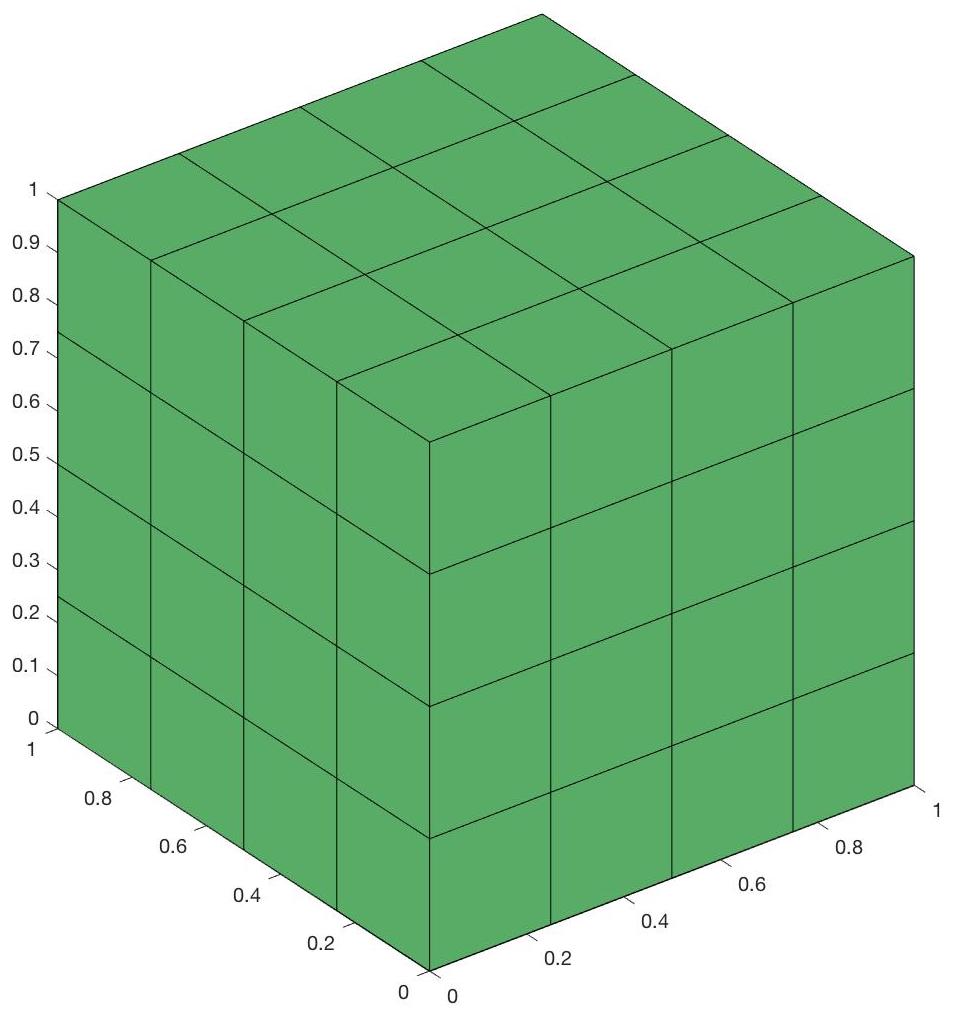}}\\
 \subfloat[][Rotated quarter of annulus.\label{fig:rev_quarter}]
   {\includegraphics[ scale=0.20]{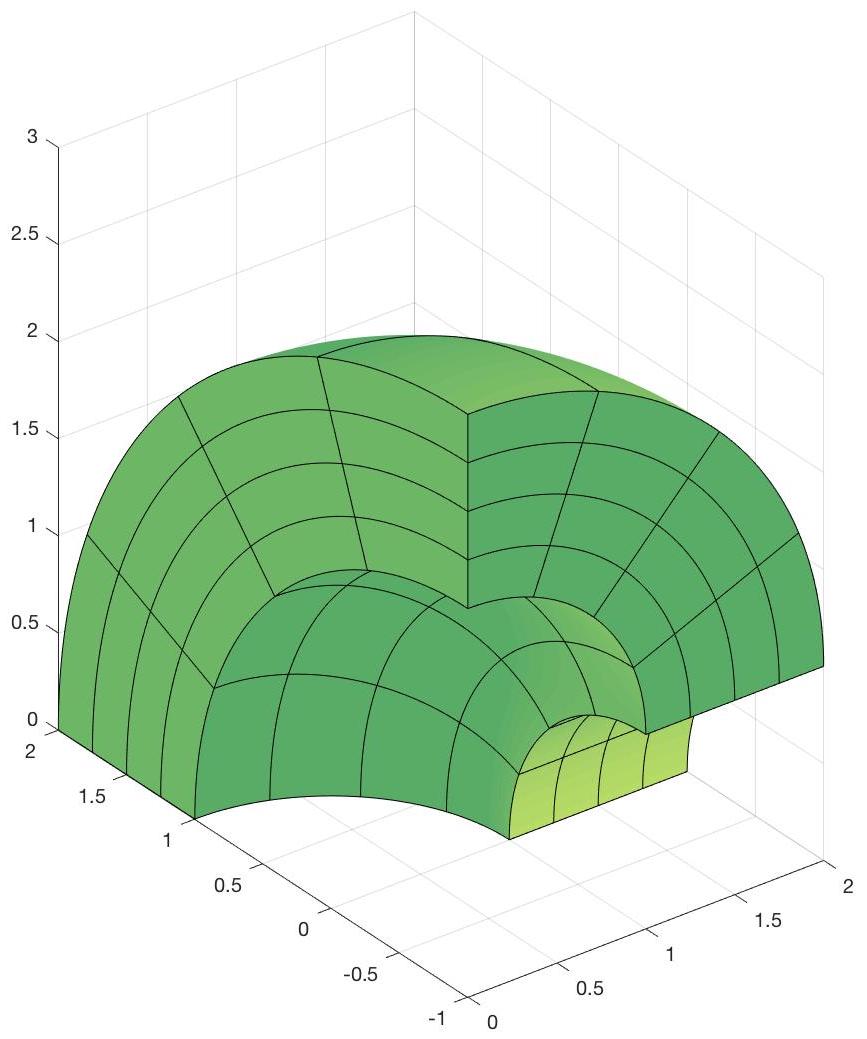}}
 \caption{Computational domains.  }
 \label{fig:Geometries}
\end{figure}
\subsection{Orders of convergence.}  We perform accuracy tests in a
2D spatial domain since the calculation of the numerical errors on 3D
spatial domains is expensive in terms of computational time, when
element-wise Gaussian quadrature is adopted.  
We set $T=1$ and we consider a 2D spatial domain: the \emph{quarter of annulus} with internal radius equal to 1 and external radius equal to 2 (see Figure \ref{fig:quarter}).
The initial and Dirichlet boundary conditions  and the source term $f$  are fixed such that the exact solution is $u =  -(x^2+y^2-1)(x^2+y^2-4)xy^2\sin(\pi t)$.   We solved the linear system with   Matlab direct solver (backslash `` \textbackslash '' operator). 
 Figure \ref{fig:space-time_error_V0} shows the $\|\cdot\|_{\mathcal{V}_0}$ relative errors with splines of degree $p_s=p_t$ from 2 to 6: the rate  of convergence of $O(h^{p_t-1})$ confirms the results of Theorem \ref{teo:a-priori}. 
As predicted by the theory, if we increase  the degree of spatial B-splines   and we set $p_s = p_t+1$, we can gain an order of convergence. Indeed,  Figure \ref{fig:space-time_error_V0_unbalanced}  shows that in this case the $\|\cdot\|_{\mathcal{V}_0}$ relative errors have order $p_t$.

Even if theoretical results do not cover this case, we also analyze in  Figures \ref{fig:space-time_error_L2} and \ref{fig:space-time_error_H1} the error behaviour for $p_t=p_s$ in    $L^2(\Omega\times [0,T])$ and $H^1(\Omega\times [0,T])$ norms, respectively. 
 While the $H^1$ errors are optimal for every $p_t$ considered, i.e. they are of order ${p_t}$ for $p_t\geq 2$, the orders of convergence in $L^2$ norm  are optimal and thus equal to $p_t+1$, only for $p_t\geq 3$.
 The suboptimal behaviour of the error in  $L^2$ norm for $p_t=p_s=2$
is in fact consistent with the Aubin-Nitsche type estimate
and with the a-priori error estimates for fourth-order PDEs  (see in
particular the classical  result  \cite[Theorem 3.7]{Strang1973}).

\begin{figure}
 \centering
 \subfloat[][$\mathcal{V}_0$-norm relative error with $p_t=p_s$.\label{fig:space-time_error_V0}]
   {\includegraphics[width=.48\textwidth]{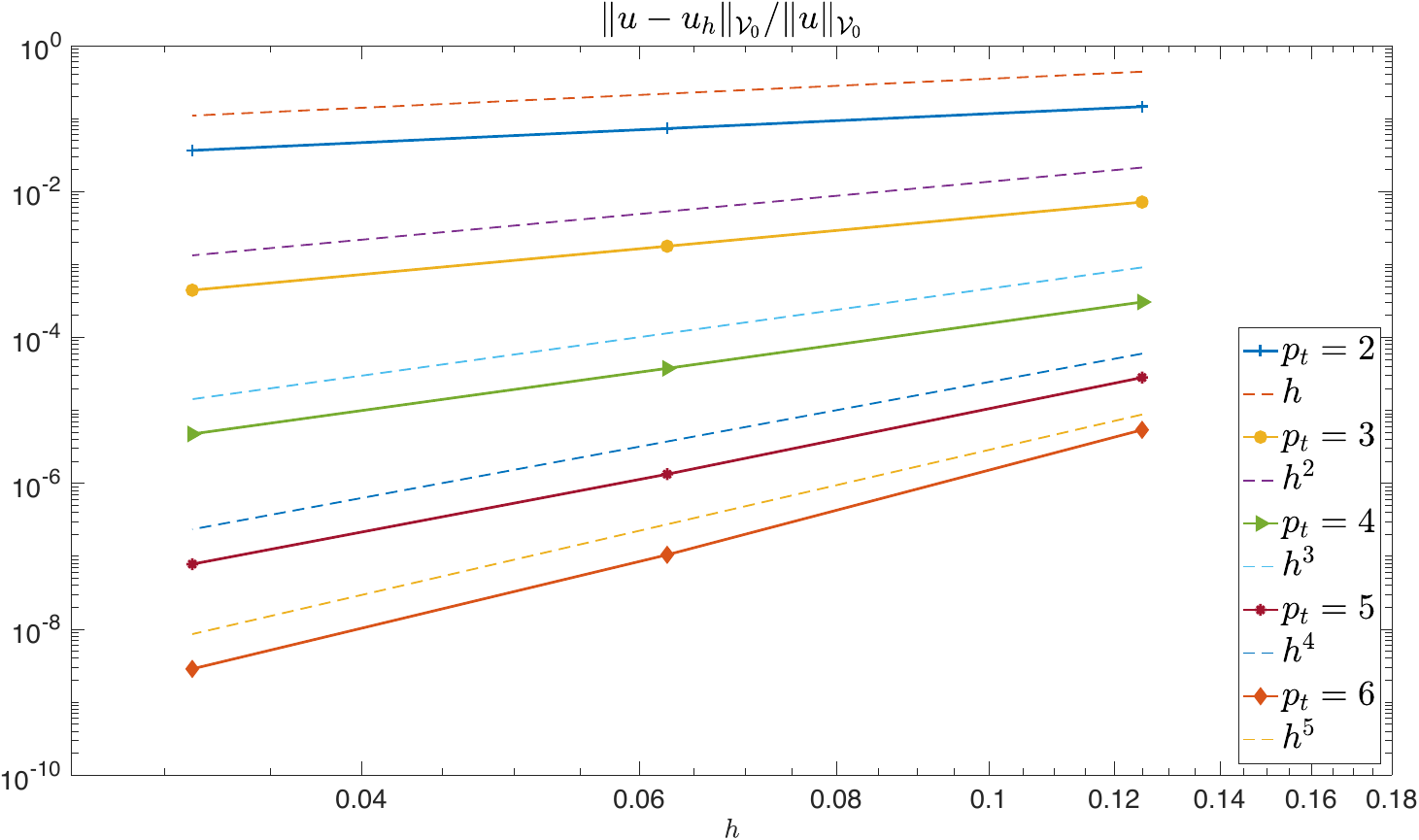}}\quad   
 \subfloat[][$\mathcal{V}_0$-norm relative error with $p_t=p_s-1$.\label{fig:space-time_error_V0_unbalanced}]
   {\includegraphics[width=.48\textwidth]{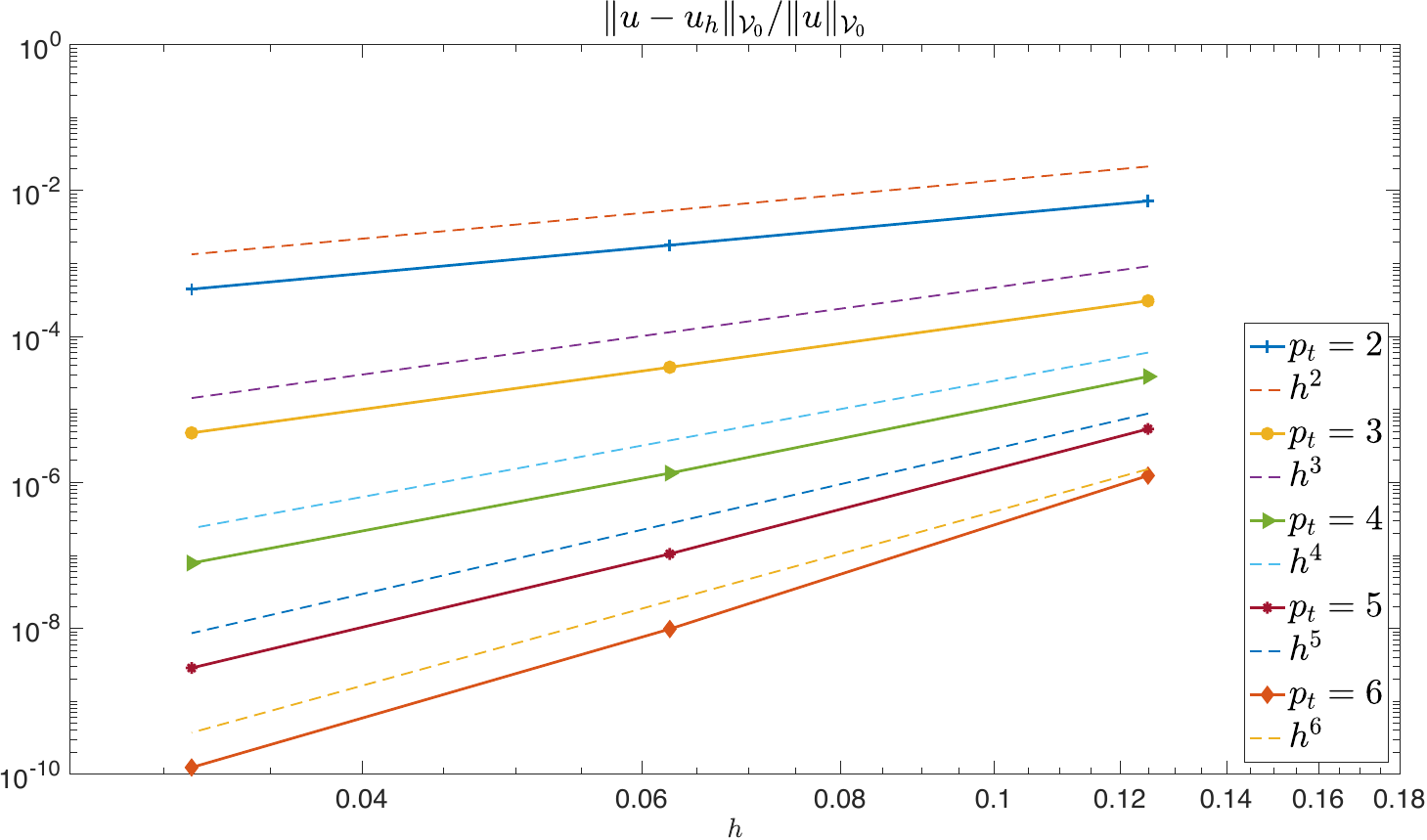}}\\
  \subfloat[][$L^2$-norm relative error with $p_t=p_s$.\label{fig:space-time_error_L2}]
   {\includegraphics[width=.48\textwidth]{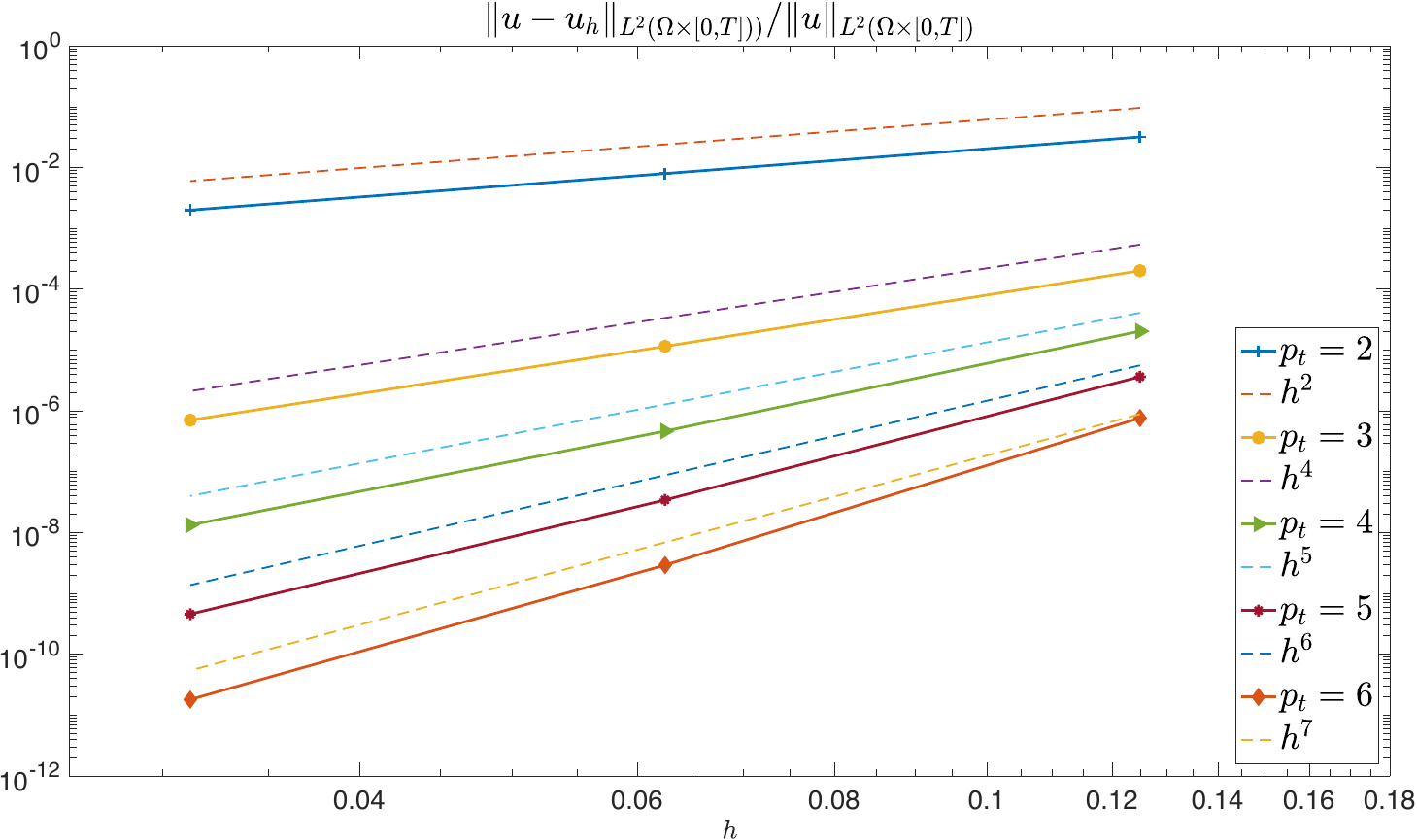}}\quad   
 \subfloat[][$H^1$-norm relative error with $p_t=p_s$.\label{fig:space-time_error_H1}]
   {\includegraphics[width=.48\textwidth]{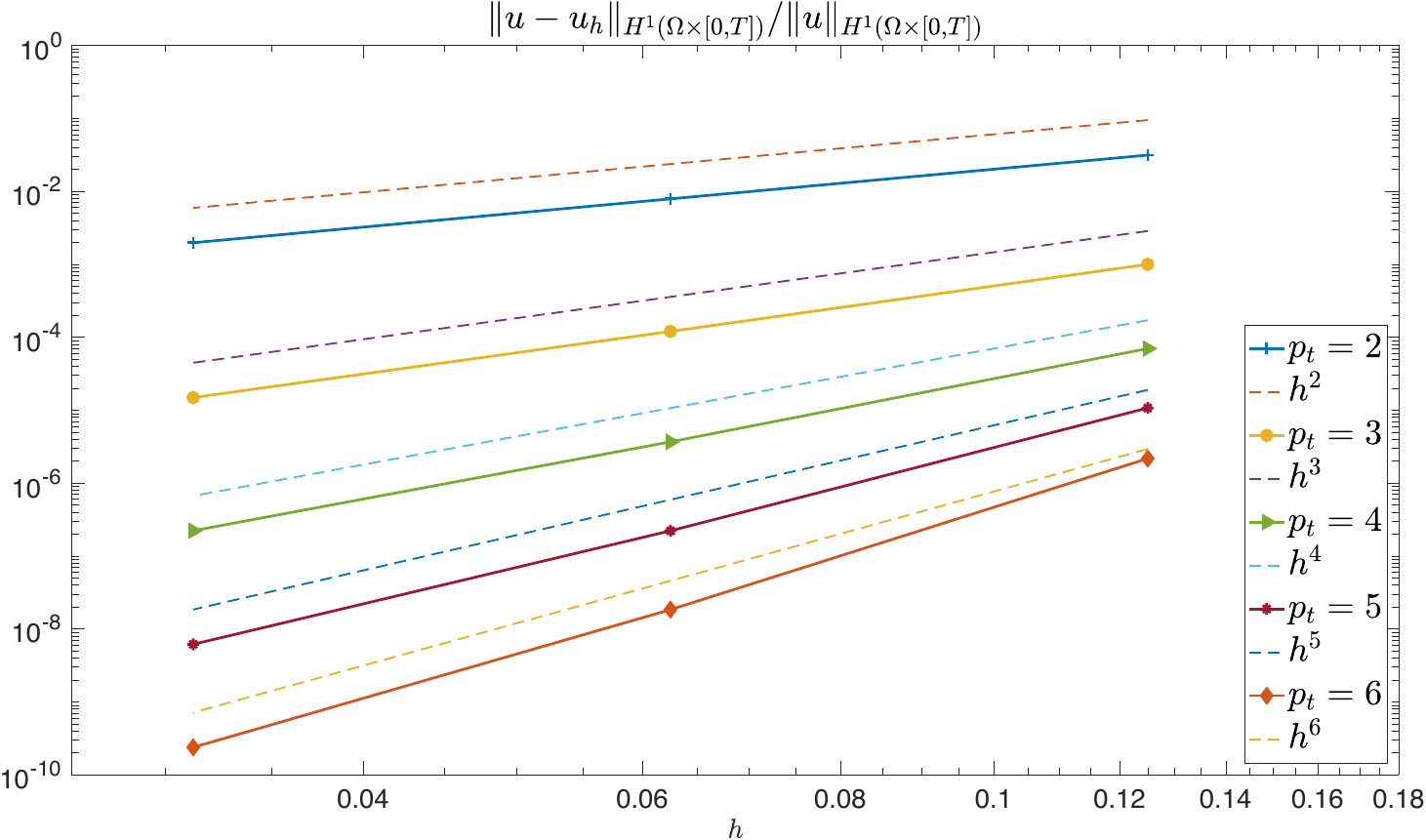}}
 \caption{Relative errors.}
 \label{fig:Errors}
\end{figure}

\subsection{Performance of the preconditioner} To assess the  performance of our preconditioning strategy, we  set $T=1$ and we focus on   two 3D spatial domains $\Omega \subset\mathbb{R}^3$, represented in Figure \ref{fig:cube} and Figure \ref{fig:rev_quarter}: the \emph{cube} and the \emph{rotated quarter of annulus}, respectively.  
As a comparison, we also consider  as preconditioner for CG the Incomplete Cholesky with zero fill-in  (IC(0)) factorization of $\vect{A}$, that is executed by the Matlab routine \texttt{ichol}. 
Tables \ref{tab:cube} and \ref{tab:rev_quarter} report the number of
iterations and the total solving time, that includes the setup time of
the preconditioner. The symbol `` * '' is used when  the  construction
of the  matrix $\vect{A}$ or its matrix factors  go out-of-memory. We force the execution to be sequential and to use only a single computational thread.

As discussed in the previous section, the matrix-vector products of CG are computed in a matrix-free way using its factors as in \eqref{eq:A_kron}. 
Matrix $\vect{A}$ is still assembled in order to use the IC(0) preconditioner. In any case, the assembly times are never included in the reported times.

 We first  consider  the domain $\widehat{\Omega}=\Omega=(0,1)^3$
 (Figure \ref{fig:cube}). Note that in this case we have that $[\vect{P}]_{i,j}=\mathcal{\widehat{P}}(\widehat{B}_{i,\vect{p}}, \widehat{B}_{j,\vect{p}})=\mathcal{P}(B_{i,\vect{p}}, B_{j,\vect{p}})$. We set homogeneous Dirichlet and zero initial boundary conditions and we fix $f$ such that the exact solution is $u = \sin(\pi x)\sin(\pi y)\sin(\pi z)\sin(t)$. 

Table \ref{tab:cube} shows the performance of   $\vect{P}$ and IC(0) preconditioners in the case $p_t = p_s$.  
 The number of iterations obtained with $\vect{P}$ are stable w.r.t
 $p_t$ and $n_{sub}$.  
 
Even if the number of iterations of our strategy might be larger
than that of IC(0), the overall computational time is significantly lower, 
up to two orders of magnitude for the problems considered. This is due to the higher setup and application cost of the IC(0) preconditioner.

{\renewcommand\arraystretch{1.2} 
\begin{table}
\begin{center}
%\footnotesize
\begin{tabular}{|r|c|c|c|c|}
\hline
& \multicolumn{4}{|c|}{$\vect{P}$ + CG \ $p_t=p_s$ \ Iterations /  Time } \\
\hline
$n_{sub}$ & $p_t=2$ & $p_t=3$ & $p_t=4$ & $p_t=5$   \\
\hline
8 &   \z9 / \z\z\z0.06  &  11 / \z\z\z0.07   &  11  / \z\z0.18     &   11  / \z\z0.28    \\
\hline
16 &    11 /  \z\z\z0.27 & 11 /  \z\z\z0.69  & 12 / \z\z1.80  &    12 / \z\z3.80     \\
\hline
32 &  12 / \z\z\z5.10    & 12 / \z\z13.37  &  12  / \z27.31   & 12 / \z52.95   \\
\hline
64 & 13 / \z100.09 & 13 / \z227.93 & 13 / 458.86 & 13 / 924.44 \\
\hline
128 & 13 / 2012.94 & 13 / 4235.96 & $\ast$  &  $\ast$ \\
\hline
\end{tabular} 
\vskip 2mm
\begin{tabular}{|r|c|c|c|c|}
\hline
& \multicolumn{4}{|c|}{IC(0) + CG \ $p_t=p_s$ \ Iterations / Time } \\
\hline
$n_{sub}$ & $p_t=2$ & $p_t=3$ & $p_t=4$ & $p_t=5$  \\
%\hline
%4 &     \z4 / \z\z0.02   &  \z4 / \z0.04   & \z5  /   \z\z0.36   &  \z5 / \z\z\z2.60    \\
\hline
8 &   \z9 / \z\z0.18  & \z7  / \z1.69   &  \z6 / \z14.04     &   \z6  /  \z\z80.39    \\
\hline
16 &   22 / \z\z5.01 & 16 / 45.54   & 12 / 355.99   &    10 /     1913.90 \\
\hline
32 &  64 / 157.05     & $\ast$  &   $\ast$  & $\ast$   \\
\hline
\end{tabular}
\caption{Cube domain with  $p_t=p_s$. Performance of $\vect{P}+CG$ (upper table) and of IC(0)+CG (lower table). }
\label{tab:cube}
\end{center}
\end{table}}

 Then we consider as computational  domain $\Omega$ a quarter of annulus with center in the origin, internal radius 1 and external radius 2, rotated   along the axis $y=-1$ by $\pi/2$ 
(see Figure
\ref{fig:rev_quarter}). Boundary data and forcing function are set  such that the exact solution is $u = -(x^2+y^2-1)(x^2+y^2-4)xy^2\sin(z)\sin(t)$.

Table \ref{tab:rev_quarter} shows the results of CG coupled with $\vect{P}$, $\vect{P}^{\mathbf{G}}$ or IC(0) preconditioner. From the spectral estimates of Theorem \ref{teo:spec_est}, we know that the geometry parametrization $\mathbf{G}$, which in this case is not trivial, plays a key-role in the performance of $\vect{P}$. This is confirmed by the results of   Table \ref{tab:rev_quarter}: the number of iterations  is higher than the ones obtained in the cube domain, where $\mathbf{G}$ is the identity map (see Table \ref{tab:cube}).
 However,  the inclusion of some geometry information, and thus the use of $\vect{P}^{\mathbf{G}}$ as a preconditioner, improves the performances, as we can see from the middle table of Table \ref{tab:rev_quarter}.
Moreover, we show  that IC(0) is not competitive neither with
$\vect{P}$ nor with $\vect{P}^{\mathbf{G}}$, in terms of 
computational time.

For the last domain, we   analyze   the percentage of computation
time of  a $\vect{P}^{\mathbf{G}}$ application with respect to the
overall CG   time.  The results, reported in Table
\ref{tab:appl_matrix}, show that the time spent in the preconditioner
application takes only a little amount of the overall solving
time. The dominant cost, in this implementation  is due to the
matrix-vector products of the residual computation, that is the other
main operation performed in a CG cycle.  

Since we are primarily interested in the preconditioner
performance, in Figure \ref{fig:setup-appl}  we report in a log-log scale the computational times
required for the setup and for a single application of $\vect{P}^{\mathbf{G}}$ versus the number of degrees-of-freedom.  
We see that the setup time is clearly asymptotically proportional to
$N_{dof}$, as expected. Remarkably, the single application time grows slower than the expected theoretical cost $O(N_{dof}^{5/4})$; indeed, it grows almost as the optimal rate $O(N_{dof})$, even for the largest problems tested. As already mentioned, this is likely due to the high efficiency of the BLAS level 3 routines that perform the computational core of the application of the preconditioner.

{\renewcommand\arraystretch{1.28} 
\begin{table}
\begin{center} 
\begin{tabular}{|r|c|c|c|c|}
\hline
& \multicolumn{4}{|c|}{$\vect{P}$ + CG \ $p_t=p_s$ \ Iterations /  Time } \\
\hline
$n_{sub}$ & $p_t=2$ & $p_t=3$ & $p_t=4$ & $p_t=5$   \\
\hline 
8 &  107  / \z\z\z\z0.21 & 107 /  \z\z\z\z0.48  &  114 /   \z\z\z1.17   &   123  / \z\z\z2.73   \\
\hline
16 &  126  / \z\z\z\z2.56 & 128 / \z\z\z\z6.90   & 133 / \z\z17.04  &  135  /  \z\z35.177   \\
\hline
32 &  142 / \z\z\z52.77  & 143 / \z\z132.24 &   148 / \z292.53  &  151 / \z572.84 \\
\hline
64 & 153 / \z1056.21  & 155 / \z2415.23  & 156 / 4956.68 & 159 /  9906.33  \\
\hline
128 & 164 / 22106.01  & 166 / 47539.02    & $\ast$ & $\ast$ \\
\hline
\end{tabular} 
\vskip 2mm
\begin{tabular}{|r|c|c|c|c|}
\hline
& \multicolumn{4}{|c|}{$\vect{P}^{\mathbf{G}}$ + CG \ $p_t=p_s$ \ Iterations / Time } \\
\hline
$n_{sub}$ & $p_t=2$ & $p_t=3$ & $p_t=4$ & $p_t=5$  \\
\hline
8 &   24 / \z\z\z0.09 & 24  / \z\z\z\z0.13  &  26  /  \z\z\z0.37     &  26 /  \z\z\z0.60     \\
\hline
16 &   35 / \z\z\z0.77  & 34 / \z\z\z\z1.96    &  33 / \z\z\z4.62   &      33 /  \z\z\z9.35    \\
\hline
32 &   42 / \z\z17.03    & 41 / \z\z\z39.57  &  40 / \z\z82.35  & 41 / \z161.73  \\
\hline
64 & 46 / \z333.20 & 44 / \z\z716.03 & 49 / 1577.55 & 53 / 3384.08  \\
\hline
128 & 48 / 6767.08   & 50 / 14814.09  & $\ast$ & $\ast$\\
\hline
\end{tabular}
\vskip 2mm
\begin{tabular}{|r|c|c|c|c|}
\hline
& \multicolumn{4}{|c|}{IC(0) + CG \ $p_t=p_s$ \ Iterations / Time } \\
\hline
$n_{sub}$ & $p_t=2$ & $p_t=3$ & $p_t=4$ & $p_t=5$  \\

\hline
8 &   11 / \z\z0.17 &  \z8 / \z1.71   &   \z7 /  \z13.96     &   \z6 /  \z\z80.28     \\
\hline
16 &   29 / \z\z5.52  & 18 / 45.22    &  14 / 377.47   &      11 /   1895.55   \\
\hline
32 &  86 / 185.08    & $\ast$  &   $\ast$  & $\ast$  \\
\hline
\end{tabular}
\caption{Rotated quarter domain with  $p_t=p_s$. Performance of $\vect{P}+CG$ (upper table), $\vect{P}^{\mathbf{G}}+CG$ (middle table)  and of IC(0)+CG (lower table). }
\label{tab:rev_quarter}
\end{center}
\end{table}}
 
\renewcommand\arraystretch{1.28} 
\begin{table}
\begin{center}
\begin{tabular}{|r|c|c|c|c|}
\hline
& \multicolumn{4}{|c|}{$\vect{P}^{\mathbf{G}}$} \\
\hline
$n_{sub}$ & $p_t=2$ & $p_t=3$ & $p_t=4$ & $p_t=5$ \\
\hline
8 &      35.86\%  &     20.66\%   &      10.85\%     &     \z7.05   \% \\
\hline
16 &     17.90\%  &   \z8.10  \%  &    \z3.95  \%  &   \z2.28  \%    \\
\hline
32 &     14.25 \%  &  \z7.35  \%  &   \z4.05  \%   & \z2.49   \%      \\
\hline
64 & 17.28 \% &  \z8.75 \%  & \z4.67  \%  & \z2.52 \%  \\
\hline
128 &   23.98 \% &  12.21 \% & $\ast$ & $\ast$ \\
\hline
\end{tabular}  

\caption{Rotated quarter domain with $p_t = p_s$. Percentage of computational time of the preconditioner $\vect{P}^{\mathbf{G}}$ application in the overall
CG cycle.}
\label{tab:appl_matrix}
\end{center}
\end{table}

\begin{figure}
 \centering
\includegraphics[trim={0.1cm 0.1 0.3cm 0.5},clip,width=.8\textwidth]{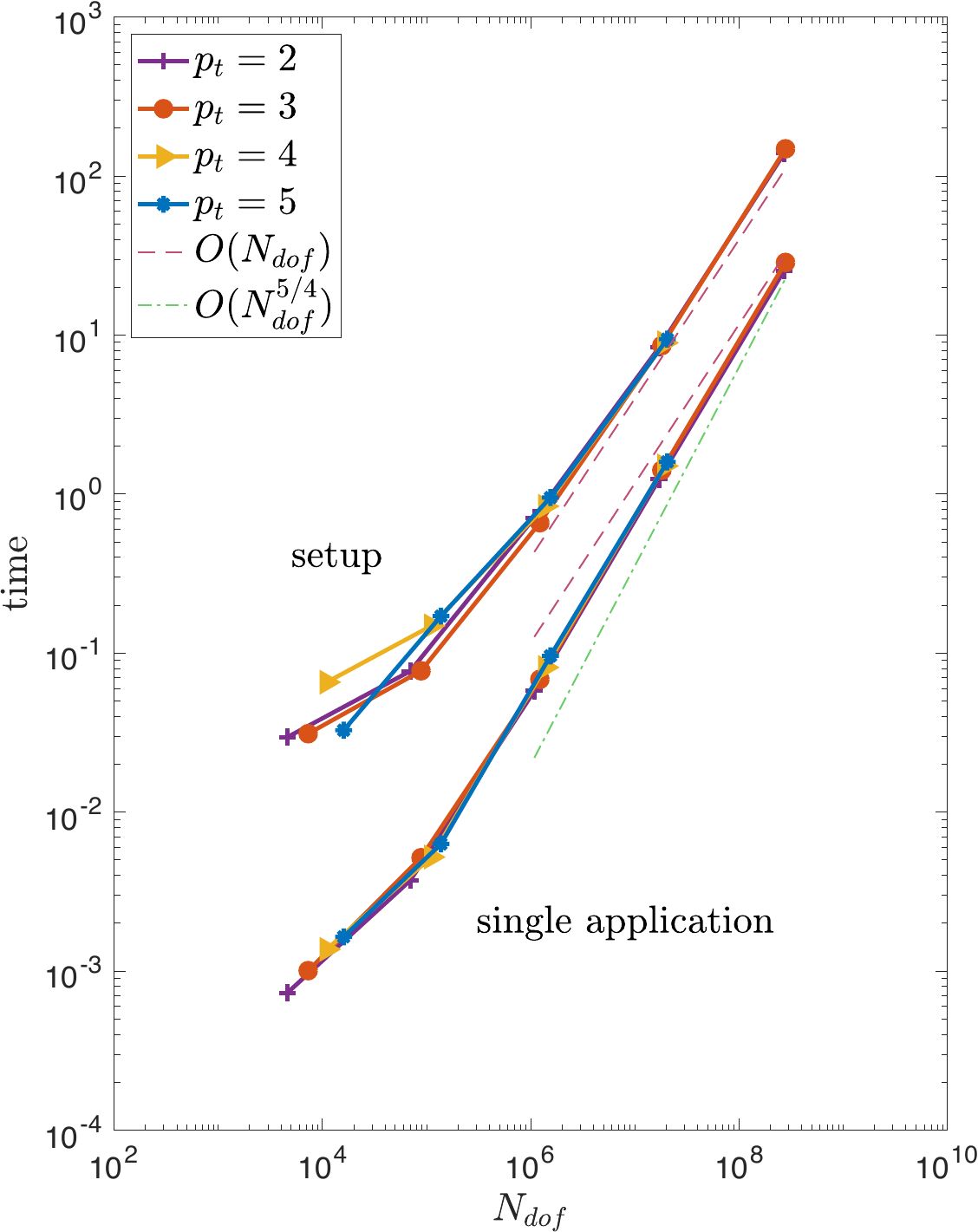}
 \caption{Rotated quarter domain with $p_t = p_s$. Setup times and  single application times of $\vect{P}^{\mathbf{G}}$.}
 \label{fig:setup-appl}
\end{figure}

\section{Conclusions} 

In this paper, we have proposed and studied a least-squares method 
for the heat equation, that  allows us to design an innovative  preconditioner  in the
framework of isogeometric analysis. 
 Even though  we adopt a global-in-time space-time  formulation,
based on smooth splines in space and time, the preconditioner
$\vect{P}$ that we have presented is highly efficient  both in terms of FLOPs and memory, thanks
to  its matrix representation as  suitable sum of Kronecker products,
leading to a Sylvester-like problem.

 The computational cost of the preconditioner
setup is at most $O(N_{dof})$ FLOPs while its  application  is  $O(N_{dof}^{1 +
  1/d})$ FLOPs. In
our numerical benchmarks the computational time, for  serial single-core
execution,  is in fact close to $O(N_{dof})$,
with no  dependence on  $p$.  The proposed  preconditioner $\vect{P}$
is indeed  robust with respect to the spline degree and
its variant, denoted with $\vect{P}^{\mathbf{G}}$, has a good performance
also when the geometry parametrization  ${\mathbf{G}}$  of the patch is not trivial. 

The storage cost is instead $O(p^d N_s + N_{dof})$, under the reasonable  assumption that
$n^2_t \leq C p^d N_s $. We emphasize that is roughly the same storage
cost that one would get by discretizing separately in space and time.

Our approach could  be coupled with a matrix-free
 idea  (see  \cite{Sangalli2017}),
and  this is expected to further  improve  the 
efficiency  of the overall method. Everything is well-suited for
parallelization: even though  in this paper
we do not consider parallel implementation, 
this  is a promising research direction for the future.

\appendix
\section{Appendix}
 
\subsection{Smooth approximation of $\widehat{\V}_0$}\label{App-smooth-approximation}

In this appendix we prove the density of spaces of  smooth functions, with
boundary conditions, in suitable Sobolev spaces on the parametric domain. The first result
concerns $H^1_0 \cap H^2$.  

\begin{lemma}\label{lemma:smooth-approx-1}
  Let $Q:=\widehat{\Omega}\times (a,b)$ be an open $(d+1)$-dimensional box.     Then, the space $ C^\infty \left ( {\overline{Q}} \right ) \cap H^1_0 \left ( { {Q}}  \right) $ is dense in  $ H^2 \left (
    {  {Q}} \right ) \cap H^1_0 \left (  {  {Q}} \right ) $.
\end{lemma}

\begin{proof} Let $w \in   H^2 \left (
    {   {Q}} \right ) \cap H^1_0 \left(  {  {Q}} \right ) $ and $g := -\Delta w \in  L^2 \left (  {   {Q}} \right ) $. Clearly, $w$ solves, in a weak sense,   
\[
\begin{cases}
    - \Delta w   = g & \text{in $  {   {Q}}  $,} \\
  \hskip 5mm  w   = 0  & \text{on $\partial  {   {Q}}  $.}
    
\end{cases}
\]
Let $g_n \in C^\infty_0 \left (  {  {Q}} \right ) $ such that $g_n
\to g$ in $L^2 ({  {Q}}  )$ and let $w_n \in H^1_0\left
  ( {  Q} \right)$ be the  weak   solution of 
 
\[
\begin{cases}
    - \Delta w_n = g_n & \text{in $  {  {Q}}  $,} \\
   \hskip 5.2mm  w_n = 0  &\text{on $\partial  {   {Q}}  $.}
    
\end{cases}
\]

 Then
$w_n \to w$ in $H^2\left ( {  {Q}}\right )$. Note that $w_n$ is
defined on $ { \overline{Q}}$, 
vanishes on  its boundary  $\partial { {Q}}$ and  is harmonic in a inner neighborhood
of $\partial {  {Q}}$ because $g_n$ has compact support, thus,   employing recursively Schwarz
reflection (see, e.g., \cite[Exercise 9, Section 2.5]{Evans2010book}
and \cite[Remarque 10, Section IX.2]{brezis1983analyse}) we  can
extend  $w_n$ outside $ { \overline{Q}}$, such that this extension is
harmonic in a neighborhood of $\partial { {Q}}$. It follows that $w_n \in C^\infty \left ( {  \overline{Q}}\right )  $. \end{proof} 

The second result focuses on the space which is needed for our
least-squares formulation, that is, $H^2$ in space and $H^1$ in time,
endowed with homogeneous initial and boundary conditions. This is used to
show, in Theorem~\ref{teo:convergence},  the convergence of our method.

\begin{lemma}\label{lemma:smooth-approx-2}
 Let  
 \begin{align*}
\widehat{\V}_{ 0} =& \left\{ v \in \left[ \left  ( H^2  ( { \widehat{\Omega}} )\cap
  H^1_0  ( { \widehat{\Omega}}  )\right )\otimes L^2(0,1)\right] \cap
\left[  L^2 ( { \widehat{\Omega}}  )\otimes H^1(0,1) \right]\
 \text{ s.t. }  \ \ v=0   \text{ on }  { \widehat{\Omega}}\times\{0\}  \right\}  \end{align*}
  be a Hilbert space endowed with the norm
 
\[
\|v\|_{\widehat{\V}_0}^2:= 
\int_{0}^1\|\Delta v(\cdot, \tau)  \|_{L^2(\widehat\Omega)}^2  \,\d\tau +  \int_{0}^1 \|\partial_\tau v(\cdot, \tau) \|_{L^2(\widehat{\Omega})}^2\,\d\tau .
\]
 
Then, the space $ C^\infty \left ( [0,1]^{d+1}\right ) \cap
\widehat{\V}_{ 0}  $  is dense in $ \widehat{\V}_{ 0}  $. 
\end{lemma}

 \begin{proof}

Consider a given $w \in \widehat{\V}_{ 0}$ as the solution of  a heat
problem   on the parametric domain $\widehat{\Omega} \times (0,1) = (0,1)^{d+1}$,  with datum $g := ( \partial_t w - \Delta w)\in L^2\left ( (0,1)^{d+1} \right ) $,     i.e. 

\begin{equation}
\label{eq:heat_eq_param}
\begin{cases} 
		\partial_t w - \Delta w \ = \  g  \quad &\mbox{in }\ \ \ \hspace{1mm} \widehat{\Omega} \times (0, 1),  \\
		 w  \ =\  0  \quad \quad & \mbox{on }\  \hspace{0.7mm}   \partial \widehat{\Omega}   \times  (0,1), \\
	 	 w \ =\  0\quad & \mbox{in }\ \  \ \ \widehat{\Omega}   \times \lbrace 0  \rbrace. 
\end{cases}
\end{equation}

  Let $g_n \in C^\infty_0 \left (
  (0,1)^{d+1} \right ) $ such that $g_n \to g $ in $L^2\left (
  (0,1)^{d+1} \right )  $ and let $w_n$ be the solution of the same heat
problem   \eqref{eq:heat_eq_param}   with datum $g_n$. Following the proof of    Theorem
\ref{teo:parabolic-regularity} applied in ${ \widehat{\Omega}}\times (0,1),$   we get $w_n \to w$ in $\widehat{\V}_{ 0}$
while, by \cite[Section 7.1.3, Theorem 6]{Evans2010book}, we also get $w_n \in  L^2( { \widehat{\Omega}})\otimes H^2 ( 0,1)$. 

We use now Lemma~\ref{lemma:smooth-approx-1} to approximate $w_n$.   Fix $\delta > 0$   and consider an extension $z_n$ of $w_n$ and $z_n \in  \left[ \left  ( H^2 ( {  \widehat{\Omega}}  )\cap
  H^1_0( { \widehat{\Omega}}  )\right )\otimes L^2(0,1+\delta)\right] \cap
\left[  L^2  ( { \widehat{\Omega}}  )\otimes H^2(0,1+\delta) \right]$ such
  that $z_n (\cdot, 1+\delta ) = 0$.\footnote{ The extension is obtained, for instance, in the following
  way. Consider the null extensions $\widetilde{f}_n$ of $f_n$ in $L^2
  \left ( {  \widehat{\Omega}} \times(0,1+\delta) \right ) $. Let
  $\widetilde{w}_n$ be the solutions of a heat problem  \eqref{eq:heat_eq_param}   in ${ \widehat{\Omega}}
  \times(0,1+\delta) $ (note that $\widetilde{w}_n$ is an extension
  of $w_n$, by uniqueness,  and that $\widetilde{w}_n$ has the same
  regularity of $w$).  Next, let $\phi$ be a cut-off function for
  $(0,1)$ in $(0, 1+\delta)$ and let $z_n ( \cdot  t) = \phi (t)  \, \widetilde{w}_n (\cdot, t)$.}  
Now   observe that $z_n$ is a function in $ \in H^2 
    \left (  \widehat{\Omega} \times(0,1+\delta) \right )  \cap H^1_0  \left
      (  \widehat{\Omega}  \times(0,1+\delta) \right )  $,   we can then apply  Lemma \ref{lemma:smooth-approx-1}  to construct a
    sequence ${z}_{n,k} \in   C^\infty \left({  [0,1]^d} \times [0,1+\delta] \right )\cap H^1_0\left(\widehat{\Omega}\times(1+\delta)\right) 
   $  converging, as $k \rightarrow \infty$, to  $z_n $  in the $ H^2 
    \left (  \widehat{\Omega} \times(0,1+\delta) \right ) $ norm. The
    restriction of   ${z}_{n,k} $ to $[0,1]^{d+1} $ belongs to the
    required space $   C^\infty \left (  {  [0,1]^{d+1}} \right )\cap \widehat{\V}_0   $ and the sequence
converges (as $k \rightarrow \infty$) to  $z_n $ in the
$H^2((0,1)^{d+1})$ norm, and thus in the  $\widehat{\V}_0$-norm.   
\end{proof}

\subsection{A variational formulation  equivalent to \eqref{eq:min_prob}--\eqref{eq:cont_problem} }\label{App}

\renewcommand{\dx}{\mathrm{d} \Omega}
 
In this appendix, we show that the least-squares space-time functional
\begin{equation}
\label{eq:ls-energy}
\mathcal{E}_{LS}(v):=\tfrac12 \int_0^T    \| \partial_t v(\cdot,t)   - \Delta v(\cdot,t) - f(\cdot,t)   \|^2_{L^2(\Omega)}  \,\dt \quad \forall v\in \mathcal{V}_0
\end{equation}
that appears in the minimization problem   \eqref{eq:min_prob}, coincides with another space-time functional  \eqref{eq:energy}   appearing in the theory of gradient flows and curves of maximal slopes  (see e.g., \cite{Ambrosio2008, Santambrogio2017}).

First, let us introduce the energy $\mathcal{J} : H^1_0 ( \Omega)\times [0,T] \to \mathbb{R}$ given by  $$ \mathcal{J} (  w, t ) := \int_\Omega\left(   \tfrac12 |  \nabla w ( \vecx) |^2 - f(\vecx , t)  w (\vecx ) \right) \, \dx  $$
 and assume, for the sake of simplicity, that $f \in H^1 ( 0, T ; L^2 (\Omega)) = L^2(\Omega) \otimes H^1(0,T)$. If $w \in    H^1_0(\Omega)\cap H_\Delta (\Omega)$   then for all $z \in H^1_0 ( \Omega)$ and for all $t\in (0,T)$ by Green's formula we have 
 \begin{equation*} 
\partial_w \mathcal{J} ( w, t ) [ z ] = \int_\Omega \big( - \Delta w (\vecx) - f (\vecx , t)  \big) z (\vecx) \, \dx.
 \end{equation*}
Moreover, thanks to the regularity of $f$ we have 
 \begin{equation*} \partial_t \mathcal{J}  ( w, t ) =   - \int_\Omega w (\vecx)  \partial_t f (\vecx , t) \, \dx .
 \end{equation*}
At this point, let us see that the functional $\mathcal{E}_{LS}$ coincides with the following functional defined $\forall v\in \V_0$ 
\begin{align}
\label{eq:energy}
\mathcal{E}(v):=& \mathcal{J} (  v ( \cdot, T) , T ) + \tfrac12 \int_0^T \left(  \| \partial_t v (\cdot, t)  \|^2_{L^2(\Omega)} + \| \Delta v(\cdot, t) + f(\cdot, t)  \|^2_{L^2(\Omega)}  \right) \, \dt \,   - \int_0^T   \partial_t \mathcal{J} (v(\cdot, t), t )  \, \dt.\nonumber
\end{align}  

For $v \in \V_0$ we know, e.g., by \cite[Lemme 3.3]{Brezis1973}, that the energy $t \mapsto \mathcal{J} ( v(\cdot, t), t)$  
is absolutely continuous and thus
 \begin{align*}
  \mathcal{J} ( v (\cdot, T), T) & = \int_0^T \frac{\mathrm{d}}{\mathrm{d} t} \mathcal{J} ( v(\cdot, t), t) \, \dt  = 
	 \int_0^T \left( \partial_w \mathcal{J} ( v(\cdot, t), t) [ \partial_t v(\cdot, t) ] + \partial_t \mathcal{J} (v(\cdot, t),t ) \right)  \dt \\
	& = \int_0^T \int_\Omega \big( - \Delta v - f   \big) \partial_t v  \, \dx  \,\dt - 
		 \int_0^T  \int_\Omega v \,\partial_t f     \, \dx  \, \dt \,.
\end{align*} 
Then, we can re-write the least-squares functional \eqref{eq:ls-energy} as follows:
\begin{align} 
 \mathcal{E}_{LS}(v)& = \tfrac12 \int_0^T \left(  \| \partial_t v(\cdot, t) \|^2_{L^2(\Omega)} + \| \Delta v(\cdot, t)  + f(\cdot, t)  \|^2_{L^2(\Omega)} \right)  \, \dt   -  \int_0^T  \int_\Omega   ( \Delta v   +  f ) \partial_t v  \, \dx \, \dt  \\&  =  \mathcal{J} (  v (\cdot, T), T ) + \tfrac12 \int_0^T \left(  \| \partial_t v(\cdot, t) \|^2_{L^2(\Omega)} +  \| \Delta v(\cdot, t) + f(\cdot, t)  \|^2_{L^2(\Omega)}\right) \, \dt \,   - \int_0^T \hspace{-3pt}   \partial_t \mathcal{J} (v(\cdot, t) , t )  \, \dt \nonumber \\
 & =     
\mathcal{E} (v)\nonumber .
\end{align} 
 As a consequence, the representation \eqref{eq:equivalent_A} in the discrete space $\mathcal{V}_{h,0}$ holds also in the space $\mathcal{V}_0$, moreover, the bilinear form \eqref{eq:a-form} turns out to be the Euler-Lagrange equation of the functional \eqref{eq:energy}.

\subsection{Separation of variables algorithm}\label{app:sep_var}
 After approximating each function  by 
 piecewise constants, \eqref{eq:coefficient-approx}
 becomes
 \begin{equation}
\label{eq:coefficient-approx-tensor}
  \begin{aligned}
& \left[\mathfrak{C}^{(k)}\right]_{i_1,\dots,i_{d+1}}\approx
[{\mub}^{(1)}]_{i_1}\dots
[\mub^{(k-1)}]_{i_{k-1}}[\omegab^{(k)}]_{i_k}[\mub^{(k+1)}]_{i_{k+1}} \dots
 [\mub^{(d+1)}]_{i_{d+1}}, 
\end{aligned}
\end{equation}
where, denoting by  $\mathbb{R}_{+}$  the set of strictly positive real numbers, the tensors $\mathfrak{C}^{(k)}\in\mathbb{R}_{+}^{n_1\times\dots\times
  n_{d+1}}$ are given and  $\mub^{(k)}, \omegab^{(k)} \in
\mathbb{R}_{+}^{n_k}$, $k=1,\ldots,d+1$,  are  unknown vectors to be computed. In our case,  $n_1, \ldots, n_{d}$ are the number of
elements in each space direction and $n_{d+1}$ the number of elements
in time, and we construct $\mathfrak{C}^{(k)}\in\mathbb{R}_{+}^{n_1\times\dots\times
  n_{d+1}}$  by interpolating $c_k$ in the element barycenters.

In order to compute the approximation
\eqref{eq:coefficient-approx-tensor}, we aim at finding  $\mub^{(k)}, \omegab^{(k)} \in\mathbb{R}_{+}^{n_k}$  for $k=1,\dots,d+1$, that minimize the functional
\[ \begin{aligned}
  \left [ \vect{\chi}^{(k)}, \vect{\psi}^{(k)}\ \right ]_{k=1,\dots,d+1} 
\longmapsto  \   \max_{ \substack{ i_k=1,\dots, n_k;\\  k=1,\dots, d+1 } }\left\{ \left|\log  \left(\frac{[\mathfrak{C}^{(k)}]_{i_1,\dots,
          i_{d+1}}} {[\vect{\chi}^{(1)}]_{i_1}\dots
       [\vect{\chi}^{(k-1)}]_{i_{k-1}}[\vect{\psi}^{(k)}]_{i_k}[\vect{\chi}^{(k+1)}]_{i_{k+1}}
        \dots [\vect{\chi}^{(d+1)}]_{i_{d+1}}}\right)\right|  \right\}.
\end{aligned} \]
Equivalently, we look for $\mub^{(k)}, \omegab^{(k)} \in\mathbb{R}_{+}^{n_k}$ for $k=1,\dots,d+1$, such that the minimum and maximum values of the ratio 
\[ \frac{[\mathfrak{C}^{(k)}]_{i_1,\dots, i_{d+1}}}{[\mub^{(1)}]_{i_1}\dots
[\mub^{(k-1)}]_{i_{k-1}}[\omegab^{(k)}]_{i_k}[\mub^{(k+1)}]_{i_{k+1}} \dots [\mub^{(d+1)}]_{i_{d+1}}}, 
 \]
for $i_k=1,\dots, n_k; \;  k=1,\dots, d+1$, are as close as possible to 1 (in the logarithmic sense).

Algorithm \ref{al:sep_approx} computes an approximate solution of the
 above optimization problem. 
This algorithm generalizes the one
used in \cite{Wachspress1984} which is focused on the case of two variables, i.e. it computes the approximations
\[\left[\mathfrak{C}^{(1)}\right]_{i_1,i_2} \approx [\omegab^{(1)}]_{i_1}[{\mub}^{(2)}]_{i_2},
\qquad \left[\mathfrak{C}^{(2)}\right]_{i_1,i_2} \approx [{\mub}^{(1)}]_{i_1}[\omegab^{(2)}]_{i_2}.\]
Note that in this case the two approximation problems are completely decoupled, so they can be solved independently.
As in \cite{Wachspress1984}, in all our tests we set $maxit = 2$.
 
\begin{algorithm} 

\caption{Separation of variables}\label{al:sep_approx}
 \begin{algorithmic}[1]
  \State  Initialize $\mub^{(l)}=\omegab^{(l)}=\mathbf{1}_{n_l}$ for $l=1,\dots,d+1$.
  \For{ $iter=1 \dots maxit$}{
  \For{$k=1,\dots,{d+1}$}{
 \State Compute $\mathfrak{V}^{(k)}\in\mathbb{R}^{n_1\times\dots\times n_{d+1}}$ s.t.   
\Statex    \qquad \qquad  \quad \quad \quad \quad \qquad  $\left[\mathfrak{V}^{(k)}\right]_{i_1,\dots,i_{d+1}}=\frac{[\mathfrak{C}^{(k)}]_{i_1,\dots, i_{d+1}}} {[\mub^{(1)}]_{i_1}\dots [\mub^{(k-1)}]_{i_{k-1}}[\mub^{(k+1)}]_{i_{k+1}} \dots [\mub^{({d+1})}]_{i_{d+1}}}.$ 
      \For{$j=1,\dots,n_k$}  
\State Compute $m=\min\left\{ \mathfrak{V}^{(k)} _{i_1,\dots,i_{k-1},j,i_{k+1},\dots i_{{d+1}}} \text{ s.t. }  i_l  =1,\dots,n_l; l=1,\dots,{d+1} \text{ and } l\neq k \right\}.$ 
\State Compute $M=\max\left\{ \mathfrak{V}^{(k)} _{i_1,\dots,i_{k-1},j,i_{k+1},\dots i_{{d+1}}}   \text{ s.t. }    i_l  =1,\dots,n_l; l=1,\dots,{d+1} \text{ and } l\neq k \right\}.$ 
\State Update $ [\omegab^{(k)}]_j=\sqrt{mM}. $
            \EndFor   
        }\EndFor
        \For{$k=1,\dots,{d+1}$}{
             \For{$l=1,\dots,{d+1}$} 
            \If{$l\neq k$} \State Compute $\mathfrak{W}^{(k,l)}\in\mathbb{R}^{n_1\times\dots\times n_{d+1}}$  s.t. 
            \Statex  \qquad \qquad  \quad \quad \quad \quad \qquad $\left[\mathfrak{W}^{(k,l)}\right]_{i_1,\dots,i_{d+1}}=\frac{[\mathfrak{C}^{(k)}]_{i_1,\dots, i_{d+1}}[\mub^{(l)}]_{i_l}} {[\mub^{(1)}]_{i_1}\dots [\mub^{(k-1)}]_{i_{k-1}}[\omegab^{(k)}]_{i_k}[\mub^{(k+1)}]_{i_{k+1}} \dots [\mub^{({d+1})}]_{i_{d+1}}}.$ \EndIf\EndFor 
             \State Compute
             $\mathfrak{Y}\in\mathbb{R}^{n_1\times\dots\times
               n_{d+1}}$
             s.t.  $[\mathfrak{Y}]_{i_1,\dots,i_{n_{d+1}}}=\min\left\{[\mathfrak{W}^{(k,l)}]_{i_1,\dots,i_{n_{d+1}}}
                \text{ s.t. }  l=1,\dots, {d+1} \text{ and } l\neq k\right\}$
                         \State Compute
            $\mathfrak{Z}\in\mathbb{R}^{n_1\times\dots\times n_{d+1}}$
            s.t.      $[\mathfrak{Z}]_{i_1,\dots,i_{n_{d+1}}}=\max\left\{[\mathfrak{W}^{(k,l)}]_{i_1,\dots,i_{n_{d+1}}}
               \text{ s.t. } l=1,\dots, {d+1} \text{ and } l\neq k\right\}$
      \For{$j=1,\dots,n_k$} 
      \State Compute $m=\min\left\{ [\mathfrak{Y}]_{i_1,\dots,i_{k-1},j,i_{k+1},\dots i_{{d+1}}}  \text{ s.t. }     i_l  =1,\dots,n_l;l=1,\dots,{d+1} \text{ and } l\neq k \right\}.$ 
\State Compute $M=\max\left\{ [\mathfrak{Z}]_{i_1,\dots,i_{k-1},j,i_{k+1},\dots i_{{d+1}}}  \text{ s.t. }     i_l  =1,\dots,n_l;l=1,\dots,{d+1} \text{ and } l\neq k \right\}.$
\State Update $ [\mub^{(k)}]_j=\sqrt{mM}. $  
   \EndFor  
            }\EndFor
  }\EndFor 
\end{algorithmic}
\end{algorithm}

%    Bibliographies can be prepared with BibTeX using amsplain,
%    amsalpha, or (for "historical" overviews) natbib style.

\bibliographystyle{plain}
 \bibliography{biblio_space_time}

\end{document}